\documentclass[11pt]{article}

\usepackage[utf8]{inputenc} 
\usepackage[T1]{fontenc}    
\usepackage{url}
\usepackage{ifthen}
\usepackage{cite}
\usepackage{graphicx,colordvi,psfrag}
\usepackage{amsmath,amssymb,amsthm}
\usepackage{xcolor}
\usepackage{bbding}
\usepackage{bbm}
\usepackage{dsfont}
\usepackage{centernot}
\usepackage{subcaption} 
\usepackage{bm}
\usepackage{hyperref}
\usepackage{authblk}
\usepackage[margin=2.5cm]{geometry}
\usepackage{comment}
\usepackage{booktabs}       
\usepackage{amsfonts}       
\usepackage{nicefrac}       
\usepackage{microtype}      
\usepackage{wrapfig}
\usepackage{enumitem}
\usepackage[linesnumbered,ruled]{algorithm2e}
\usepackage{algorithmic, float}
\usepackage{caption}
\usepackage{tikz,pgfplots}
\hypersetup{
	colorlinks,
	linkcolor={red!40!gray},
	citecolor={blue!40!gray},
	urlcolor={blue!70!gray}
}
\usepackage{fancyhdr}


%
%
\newcommand{\boldeta}{\boldsymbol{\eta}}

\newcommand{\0}{\mathbf{0}}

\newcommand{\y}{\mathbf{y}}
\newcommand{\w}{\mathbf{w}}

\renewcommand{\u}{\mathbf{u}}
\renewcommand{\v}{\mathbf{v}}
\newcommand{\X}{\mathbf{X}}

\newcommand{\Y}{\mathbf{Y}}
\newcommand{\N}{\mathbf{N}}

\newcommand{\Id}{\mathbf{I}}

\renewcommand{\epsilon}{\eps}

%
%

\newcommand{\Fr}{\mathrm{F}}
\newcommand{\wh}{w}
\newcommand{\tr}{\mathrm{tr}}

\newcommand{\AMSE}{\mathrm{AMSE}}

%
%

\newcommand{\R}{\mathbb{R}}

%
%

\newcommand{\F}{\mathcal{F}}

%
%


\newcommand{\Prob}{\mathbb{P}}
\newcommand{\what}{\widehat}
\newcommand{\wtilde}{\widetilde}

%
%
\newcommand{\UpsOne}{\Upsilon_1}
\newcommand{\UpsTwo}{\Upsilon_2}

\newcommand{\Efun}{E}
\newcommand{\res}{\zeta}

\newcommand{\sbar}{\underline{s}}
\newcommand{\stiel}{\overline{s}}
\newcommand{\rhat}{\widehat{r}}
\newcommand{\Rout}{\mathbf{R}'}
\newcommand{\Rin}{\mathbf{R}}
\newcommand{\MPStiel}{\mathsf{m}}

\newcommand{\Zout}{\mathbf{Z}'}
\newcommand{\Zin}{\mathbf{Z}}


\newcommand{\LESDMomInv}{\tau}

%
%
\newcommand{\Ind}{\mathds{1}}

\newcommand{\by}{\mathbf{y}}
\newcommand{\bY}{\mathbf{Y}}
\newcommand{\bx}{\mathbf{x}}
\newcommand{\bX}{\mathbf{X}}

\newcommand{\bZ}{\mathbf{Z}}

\newcommand{\bA}{\mathbf{A}}
\newcommand{\bN}{\mathbf{N}}
\newcommand{\bM}{\mathbf{M}}
\newcommand{\bO}{\mathbf{O}}
\newcommand{\ba}{\mathbf{a}}

\newcommand{\bC}{\mathbf{C}}

\newcommand{\bD}{\mathbf{D}}

\newcommand{\bb}{\mathbf{b}}
\newcommand{\bU}{\mathbf{U}}
\newcommand{\bu}{\mathbf{u}}
\newcommand{\bV}{\mathbf{V}}
\newcommand{\bv}{\mathbf{v}}

\newcommand{\bw}{\mathbf{w}}

\newcommand{\bq}{\mathbf{q}}

\newcommand{\be}{\mathbf{e}}
\newcommand{\bE}{\mathbf{E}}
\newcommand{\bR}{\mathbf{R}}
\newcommand{\bI}{\mathbf{I}}
\newcommand{\bP}{\mathbf{P}}

\newcommand{\bW}{\mathbf{W}}
\newcommand{\bg}{\mathbf{g}}

\newcommand{\bS}{\mathbf{S}}
\newcommand{\bTau}{\bm{\m{T}}}
\newcommand{\bF}{\mathbf{F}}
\newcommand{\bLambda}{\bm{\Lambda}}
\newcommand{\balpha}{\boldsymbol{\alpha}}
\newcommand{\bbeta}{\boldsymbol{\beta}}

\newcommand{\RR}{\mathbb{R}}
\newcommand{\CC}{\mathbb{C}}

\newcommand{\diag}{\mathop{\mathrm{diag}}}

\newcommand{\rank}{\mathop{\mathrm{rank}}}

\newcommand{\trace}{\mathop{\mathrm{tr}}}

\newcommand{\Expt}{\mathbb{E}}
\newcommand{\eps}{\varepsilon}

\newcommand{\m}{\mathcal}
\newcommand{\T}{\top}

\DeclareMathOperator*{\argmin}{\arg\!\min}

\newcommand{\trlim}{\overline{\mathrm{tr}}}
\newcommand{\asympeq}{\simeq}

\newcommand{\sigmaBulk}{{\theta}_{\mathrm{max}}}
\newcommand{\sigmaMin}{{\theta}_{\mathrm{min}}}
\newcommand{\sigmaThresh}{\sigma_{\mathrm{thresh}}}
\newcommand{\bSigma}{{\bm{\Sigma}}}
\newcommand{\hbSigma}{\widehat{\bSigma}}
\newcommand{\sigmaBBP}{{\sigma}_{\mathrm{BBP}}}
\newcommand{\thetaBBP}{{\theta}_{\mathrm{BBP}}}


\newcommand{\yval}{\widehat{\theta}}

\newcommand{\hbu}{\widehat{\bu}}
\newcommand{\hbv}{\widehat{\bv}}

\newcommand{\Revision}[1]{{#1}}

\theoremstyle{thmstyletwo}%
\newtheorem{theorem}{Theorem}
\newtheorem{proposition}[theorem]{Proposition}%

\newtheorem{lemma}[theorem]{Lemma}

\theoremstyle{definition}
\newtheorem{definition}{Definition}

\theoremstyle{remark}
\newtheorem{remark}{Remark}%

\numberwithin{equation}{section}
\newcommand{\longtitle}{Matrix Denoising with 
Partial Noise Statistics:\\ Optimal Singular Value Shrinkage of Spiked F-Matrices}

\newcommand{\figurepath}{{Fig/}}

\title{\longtitle}

\graphicspath{\figurepath}

\author[1]{Matan Gavish\thanks{\texttt{gavish@cs.huji.ac.il}}}
\author[2]{William Leeb\thanks{\texttt{wleeb@umn.edu}}}
\author[3]{Elad Romanov\thanks{\texttt{elad.romanov@gmail.com}}}

\affil[1]{School of Computer Science and Engineering, Hebrew University of Jerusalem}
\affil[2]{School of Mathematics, University of Minnesota}
\affil[3]{Department of Statistics, Stanford University}
\date{}

\begin{document}

    \maketitle

    \begin{abstract}
        We study the problem of estimating a large, low-rank matrix corrupted by additive noise of unknown covariance, assuming one has access to additional side information in the form of noise-only measurements. We study the Whiten-Shrink-reColor (WSC) workflow, where a “noise covariance whitening” transformation is applied to the observations, followed by appropriate singular value shrinkage and a “noise covariance re-coloring” transformation. We show that under the mean square error loss, a unique, asymptotically optimal shrinkage nonlinearity exists for the WSC denoising workflow, and calculate it in closed form. To this end, we calculate the asymptotic eigenvector rotation of the random spiked F-matrix ensemble, a result which may be of independent interest. With sufficiently many pure-noise measurements, our optimally-tuned WSC denoising workflow outperforms, in mean square error, matrix denoising algorithms based on optimal singular value shrinkage which do not make similar use of noise-only side information; numerical experiments show that our procedure's relative performance is  particularly strong in challenging statistical settings with high dimensionality and large degree of heteroscedasticity.
    \end{abstract}
    
    \section{Introduction}


Low-rank matrix reconstruction from partial or corrupted measurements is a well-studied problem in machine learning and statistics, with applications ranging from  computer vision \cite{moore2014improved} and
structural biology \cite{anden2018structural, bhamre2016denoising} 
to 
medical imaging \cite{cordero2019complex} and medical signal processing \cite{liu2021denoising}. This paper considers 
recovery of a $p$-by-$n$ matrix $\bX$ of rank $r \ll n,p$ from an additively corrupted measured matrix $\bY=\bX+\Rin$. Here, $\Rin$ is a noise matrix (independent of $\bX$) whose columns are assumed independent and identically-distributed (i.i.d.), with an arbitrary between-row correlation structure $\bSigma$. 

Such matrix denoising problems occur, for example,  
in principal component analysis (PCA) under a low-rank factor model \cite{tipping1999probabilistic}. Consider  $n$ 
data points in $p$-dimensional Euclidean space, denoted   $\by_1,\ldots,\by_n \in \RR^p$ of the form $\by_i = \bx_i + \bm{\eps}_i$. The i.i.d. ``signal'' vectors $\bx_1,\ldots,\bx_n$ are 
assumed to lie on some unknown $r$-dimensional subspace, and the i.i.d. ``noise'' vectors $\bm{\eps}_1,\ldots,\bm{\eps}_n$  have a full rank covariance  matrix $\bSigma$, and so spread over the entire ambient space. One would like to reconstruct the low dimensional signal vectors from the noisy observations $\by_1,\ldots,\by_n$. Let $\bY \in \RR^{p\times n}$ be the data matrix, formed by stacking the observations $\by_1,\ldots,\by_n$ as its columns, so that $\bY=\bX+\Rin$, where $\bX\in \RR^{p\times n}$ is a matrix whose columns are $\bx_1,\ldots,\bx_n$ and $\Rin$ is a noise matrix. This signal estimation problem becomes a matrix denoising problem: estimate $\bX$ from $\bY$.

This paper approaches the matrix denoising problem in the well-known \emph{spiked model} \cite{johnstone2001distribution}, wherein the matrix dimensions $p,n\to\infty$ with a {limiting aspect ratio $\gamma:= \lim_{p,n \to \infty} p/n > 0$} while the signal rank $r=\rank(\bX)$ is fixed. The spiked model captures the key features of the matrix denoising problem in a regime where both  underlying signal rank and  signal-to-noise ratio (SNR) are small; in particular, the ground truth $\bX$ is not consistently estimable from $\bY$ 
(see Section~\ref{sec:setup} below).

From the perspective of random matrix theory, the spiked model has relatively simple and well-understood asymptotic behavior  \cite{baik2005phase,baik2006eigenvalues,paul2007asymptotics,benaych2012singular}. Specifically, the behavior of $\bY$ is described by the following phenomena:
\begin{enumerate}
    \item \emph{Singular value displacement:} The singular values of $\bY$ are divided into a ``bulk'', which has a deterministic limiting shape and corresponds to pure noise, and at most $r$ ``outliers'' which exceed the bulk and correspond to ``signal''. The locations of the outliers are {asymptotically} deterministic, {and} the $i$-th largest singular value of $\bY$ depends only on the $i$-th largest singular value of $\bX$. The presence (or lack thereof) of an outlier is a threshold phenomenon: there is some SNR level $\sigma^*$, a detection threshold, such that the $i$-th singular value $\sigma_i$ creates an observable outlier if and only if $\sigma_i > \sigma^*$.
    \item \emph{Principal component angles:} The angles between the leading observed PCs and the signal PCs are essentially deterministic. The $i$-th signal PC is essentially orthogonal to the the $j$-th ($j\ne i$) observed PC. The angle between the $i$-th signal and $i$-th observed PCs concentrates around a deterministic number $\in [0,1)$, which may be consistently estimated from the observed $i$-th singular value of $\bY$.
\end{enumerate}

A popular and practical approach to the matrix denoising problem is \emph{singular value shrinkage}, where $\bX$ is estimated by taking the singular value decomposition (SVD) of $\bY$, retaining its singular vectors while systematically deflating the singular values to correct for the noise \cite{perry2009cross,shabalin2013reconstruction,donoho2014minimax,donoho2018optimal,gavish2017optimal,nadakuditi2014optshrink}. 
Leveraging the above spiked model asymptotic phenomena, which are entirely quantifiable, several authors have derived optimal singular value shrinkers under various settings, cf. \cite{shabalin2013reconstruction,nadakuditi2014optshrink,gavish2014optimal,gavish2017optimal,donoho2018optimal,hong2018asymptotic,hong2018optimally,leeb2021optimal,leeb2021matrix,leeb2022operator,donoho2020screenot,su2022optimal,dobriban2020optimal}.

{For isotropic noise ($\bSigma=\bI$)}, matrix denoising in the spiked model is well-understood.  However, the general case of {heteroscedastic} noise offers interesting problems of practical interest. Several recent works have studied PCA, singular value shrinkage and related spectral methods in the presence of heteroscedastic noise, either across rows, columns or both; see for example \cite{nadakuditi2014optshrink,donoho2020screenot,leeb2021optimal,leeb2021matrix,ding2021spiked,hong2021heppcat,liu2018pca,landa2021biwhitening,zhang2022heteroskedastic,hong2018optimally,hong2018asymptotic,behne2022fundamental,agterberg2022entrywise,su2022optimal}. Under our present setting of independent columns and inter-row {covariance} matrix $\bSigma$,  formulas for the optimal singular value shrinker are available: when the noise covariance $\bSigma$ is known, the optimal shrinker can be computed exactly, and when it is not, one can consistently estimate the optimal shrinkage rule from the observed spectrum of $\bY$; the resulting procedure is known as \emph{OptShrink} \cite{nadakuditi2014optshrink} (see also \cite{donoho2020screenot}).

When denoising a signal sampled with additive heteroscedastic noise, one sometimes has the opportunity to sample the noisy channel in the absence of any signal. A natural approach -- indeed a classical idea in signal estimation -- is  to use noise-only measurements to ``whiten'' the measurements. According to this approach, one should (i) ``whiten'' the data, that is multiply by $\bSigma^{-1/2}$ (assuming $\bSigma$ is somehow available); (ii) apply an estimation procedure calibrated for uncorrelated noise, yielding an estimator $\widehat{\bX}^w$; and (iii)  apply a ``recoloring'' transformation $\widehat{\bX}=\bSigma^{1/2}\widehat{\bX}^w$. The popularity of this approach in signal processing is due both to its conceptual simplicity, and to the ubiquity of linear filtering, under which it is often optimal. The influential textbook of van Trees \cite{van2004detection} advocates, for example:
    {
    \it
    ``Many of our models assumed the received signal, either a waveform or a vector, was
    observed [in] the presence of ``white noise'' [...]
    We demonstrated, first with
    vectors and then with waveforms, that we could always find a ``whitening transformation'' that
    mapped the original process into a signal plus white noise problem.
    The reader should remember to consider this approach when dealing with more general
    problems.''
    }
(\cite[Epilogue]{van2004detection}). 

 In our present denoising problem, under the spiked model, it is natural to consider a
{\it \textbf{W}hiten-\textbf{S}hrink-re\textbf{C}olor} (WSC) workflow, where a ``signal covariance whitening'' transformation is applied to the observations, followed by appropriate singular value shrinkage and a ``signal covariance re-coloring'' transformation.
The simplest scenario where WSC may be considered is when
 the noise (population)
 covariance $\bSigma$ is available as side information. 
 This case was studied recently by two of the authors, who derived the optimal shrinkage rule of the WSC procedure \cite{leeb2021optimal}. As one can expect, denoising performance were shown to improve by incorporating the side information $\bSigma$ into the WSC workflow, compared with optimal singular value shrinkage \cite{nadakuditi2014optshrink}; interestingly, the optimal shrinker for WSC was found to be different from the singular value shrinker optimally tuned for white noise. 

Clearly, the assumption that the noise covariance $\bSigma$ is known or consistently estimable is typically unrealistic  in high dimensions \cite{cai2016estimating}. As \cite{leeb2021optimal} demonstrated,
complete knowledge of the noise covariance offers significant improvement for matrix denoising  under heteroscedastic noise. One then naturally  wonders whether {\em partial} information of the noise $\Rin$ could be similarly leveraged. Specifically, assume access to side information in the form of {$m > p$} \emph{pure-noise} samples. 
Following \cite{leeb2021optimal},  the present paper considers matrix denoising in the spiked model under a WSC workflow, where instead of the noise population covariance, ``whitening'' and ``recoloring'' are done using a sample covariance matrix of pure-noise samples $\hbSigma$.  
This problem is fascinating in part owing to the fact that, as $\hbSigma$ is an inconsistent estimate of $\bSigma$, whitening by $\hbSigma$ injects additional ``noise'' into the estimation procedure,  which should be systematically corrected for. 

\paragraph*{Contributions.} 
 \begin{enumerate}
\item {\bf Optimal WSC denoiser.} Our main contribution is the derivation of the optimal WSC denoiser in mean square error. Specifically, we show that an asymptotically optimal shrinker exists for this WSC workflow, and derive it in closed form.
\item {\bf Asymptotic singular vector rotations for the spiked F-matrix.} Curiously,  the whitened data matrix 
\[
\bY^{w}=\hbSigma^{-1/2}\bY = \hbSigma^{-1/2}\bX + \hbSigma^{-1/2}\Rin \,.
\]
is an object of independent interest known in the random matrix theory literature as a  \emph{spiked F-matrix} \cite{bai2009book,nadakuditi2010fundamental}.
Prior works in signal processing and statistics have studied the problem of signal detection under spiked F-matrix ensemble, focusing on the behavior of the largest eigenvalues in the presence or absence of a signal  \cite{zhao1986detection,zhu1991estimating,stoica1997detection,nadakuditi2010fundamental,johnstone2017roy,dharmawansa2014local,johnstone2015testing}. In particular, it was shown \cite{nadakuditi2010fundamental} that the singular values of $\bY^w$ exhibit the same basic phenomenology similar to the ``classical'' spiked model:\footnote{Note that $\bY^w$ does \emph{not} follow a generalized spiked model in the sense of e.g. \cite{benaych2012singular}, since the low-rank and noise parts are statistically dependent on one another through their mutual dependence on $\hbSigma$. } its singular values are arranged in the form of a ``bulk'' plus at most $r$ outliers exceeding the bulk. (The limiting distribution of the bulk was calculated already in the classical work of Wachter \cite{wachter1980limiting}.) Formulas for the spike detection threshold, as well as the spike-forward map (singular value displacements) have also been computed.  As discussed above, 
derivation of optimal shrinkers for WSC worflow in our scenario requires calculation of the limiting angles between the population (signal) principal components and their empirical counterparts. 
A secondary contribution of the present paper is closed-form formulas for the limiting angles of the spiked F-matrix ensemble.
\end{enumerate}

\paragraph{Paper outline.}
This paper is organized as follows. In Section \ref{sec:setup}, we state the precise  observation model and estimation problem; provide key definitions of functions; and describe the proposed denoising algorithm in detail. In Section \ref{sec:main_results}, we state the theoretical results on spiked F-matrices, and show how these may be used to derive the optimal denoisers.
In Section~\ref{sec:background-F} we briefly survey several known results on the properties of the F-matrix ensemble, which shall be crucial in the derivation to follow.
Section \ref{sec:proofs} is devoted to the proofs of our technical results, 
with some details deferred to the Appendix. Lastly, in Section \ref{sec:numerical} we report on numerical experiments illustrating the behavior of the proposed denoising method. In particular, we numerically compare the performance of our method to that of optimal singular value shrinkage (OptShrink \cite{nadakuditi2014optshrink}) under different model configurations.
Section~\ref{sec:conclusion} is devoted to conclusion and some additional discussion.

\section{Notation and problem setup}
\label{sec:setup}

\subsection{Observation model}
Let $\sigma_1,\ldots,\sigma_r>0$, be positive numbers, and $\bu_1,\ldots,\bu_r\in \RR^{p}$ and $\bv_1,\ldots,\bv_r \in \RR^n$ be vectors. Denote by $\bU \in \RR^{p\times r}$ the matrix whose columns are $\bu_1,\ldots,\bu_r$, and similarly for $\bV\in\RR^{n\times r}$. Also, let $\bLambda \in \RR^{r\times r}$ be a diagonal matrix with diagonal given by $(\sigma_1,\ldots,\sigma_r)$. The signal matrix, the object to be estimated, is
\begin{align}
\label{eq:signal}
\bX = \sum_{i=1}^r \sigma_i \bu_i \bv_i^\T = \bU\bLambda\bV^\T \,. 
\end{align}
Clearly, $\rank(\bX)\le r$. 
Let $\Zin \in \RR^{p\times n}$ be a random matrix, independent of $\bX$, with i.i.d. Gaussian entries $Z_{i,j}\sim \m{N}(0,1)$, and let $\Rin = \bSigma^{1/2}\Zin$, where $\bSigma\in \RR^{p\times p}$ is positive definite. One observes $\bY\in \RR^{p\times n}$, 
\begin{align}
\label{eq:Y}
\bY = \bX + \frac{1}{\sqrt{n}}\Rin = \bU\bLambda\bV^\T + \frac{1}{\sqrt{n}}\bSigma^{1/2}\Zin \,.
\end{align}
That is, each column of $\bX$ is corrupted by additive noise of mean $\bm{0}$ and covariance $\bSigma/n$. 
We remark that this normalization (dividing the noise by $\sqrt{n}$) is such that the singular values of the signal $\bX$ and the noise $n^{-1/2}\Rin$ are of the same order, cf. \cite{shabalin2013reconstruction,nadakuditi2014optshrink,gavish2017optimal,donoho2018optimal,leeb2021optimal,donoho2020screenot}.

The noise covariance $\bSigma$ is assumed to be unknown. Instead, one is given side information in the form of $m$ pure-noise samples, which are independent of the measurement matrix $\bY$. That is, 
one observes a noise-only matrix
%
\begin{align}
\label{eq:out-of-sample-noise}
\Rout = \bSigma^{1/2} \Zout \;\in\RR^{p\times m}\,,\qquad \Zout_{i,j}\overset{i.i.d.}{\sim} \m{N}(0,1) \,.
\end{align}
We will assume that $m\ge p$ so that $\rank(\Rout)=\rank(\bSigma^{1/2})$ with probability (w.p.) $1$. 
Having observed the signal-plus-noise measurement matrix $\bY$ and the side information $\Rout$, our goal is to estimate the signal matrix $\bX$. For an estimator $\widehat{\bX}=\widehat{\bX}(\bY,\Rout)$, we measure its error using the Frobenius loss (MSE): $\Expt \|\bX-\widehat{\bX}\|_F^2$.  
The matrices in the observation model are summarized in Table \ref{table:matrices}.

\begin{table}
\centering
\begin{tabular}{| c | c  | c | c  |}
\hline  
 Symbol(s) &  Description & Size(s)  & Observed?  \\
\hline
$\bU$, $\bV$, $\bLambda$ & Signal SVD & $p \times r$, $p \times n$, $r \times r$ &  Not observed \\
\hline
$\bX$ & Signal $\bU \bLambda \bV^\top$ & $p \times n$   & Not observed \\
\hline
$\Zin$, $\Zout$ & White noise & $p \times n$, $p \times m$  & Not observed \\
\hline
$\bSigma$ & Noise covariance & $p \times p$&  Not observed \\
\hline
$\Rin$ & Noise $\bSigma^{1/2} \Zin$ & $p \times n$ & Not observed \\
\hline
$\Rout$ & Out-of-sample noise $\bSigma^{1/2} \Zout$ & $p \times m$ &  Observed \\
\hline
$\Y$ & Signal-plus-noise $\bX + \Rin$ & $p \times n$&  Observed \\
\hline
$\what\bU$, $\what \bV$, $\what \bLambda$ & $r$-SVD of $\Y$ & $p \times r$, $p \times n$, $r \times r$ 
    &  Observed \\
\hline
$\what \bSigma$ & Sample covariance $\Rout \Rout^\top/m$ & $p \times p$ &  Observed \\
\hline
$\bN$ & Pseudo-whitened noise $\what \bSigma^{-1/2} \Rin / \sqrt{n}$ & $p \times m$ &  Not observed \\
\hline
$\bE$ & Wishart matrix $\Zin \Zin^\T / n$ & $p \times p$ &  Not observed \\
\hline
$\bS$ & Wishart matrix $\Zout \Zout^\T / m$ & $p \times p$ &  Not observed \\
\hline
$\bD_1,\dots,\bD_r$ & Signal PC weights & $p \times p$ &  Not observed \\
\hline
\end{tabular}
\caption{Matrices used in this paper.}
\label{table:matrices}
\end{table}  

We study this denoising problem under the spiked model \cite{johnstone2001distribution,benaych2012singular}. Formally, we consider
a sequence of denoising problems, $n,m,p\to\infty$, 
with the following specifications:
\begin{enumerate}
    \item \textbf{``High-dimensional'' asymptotics:} 
    For constants $\gamma\in (0,\infty)$ and $\beta\in (0,1)$,
\begin{align}
        \frac{p}{n} \to \gamma,\quad \frac{p}{m} \to \beta, \quad\textrm{ as }\quad n,m,p\to\infty \,.
\end{align}

    \item \label{assum:cov} \textbf{Regularity of noise covariance sequence:} 
    \begin{enumerate}
        \item The empirical spectral distribution (ESD) of $\bSigma$ converges weakly almost surely to some deterministic, \emph{compactly supported} law $dH$. To wit, for every bounded continuous\footnote{\Revision{By way of notation, for univariate $f(\cdot)$ and symmetric $\bSigma$ we define $f(\bSigma):=\bm{E}\diag(f(\lambda_1),\ldots,f(\lambda_p))\bm{E}^\T$, where $\bSigma:=\bm{E}\diag(\lambda_1,\ldots,\lambda_p)\bm{E}^\T$ is the eigendecomposition. (This is the ``standard'' functional calculus for symmetric matrices.) }}
        $f(\cdot)$, $p^{-1}\tr(f(\bSigma)):=p^{-1}\sum_{i=1}^p f(\lambda_i(\bSigma)) \longrightarrow \int f(\lambda)dH(\lambda)$ a.s. as $p\to\infty$.
        \item The extremal eigenvalues of $\bSigma$ converge to the edges of the support of $dH$. To wit, if $\mathrm{supp}(dH)=[a,b]$ then $\lambda_{\min}(\bSigma)\to a$, $\lambda_{\max}(\bSigma)\to b$. We further require that $a>0$.
    \end{enumerate}
    We denote the first moment of the limiting empirical spectral distribution (LESD) by
    \begin{equation}
        \label{eq:first-moment-H}
            \mu := \lim_{p\to\infty} p^{-1} \trace(\bSigma) = \int_a^b \lambda dH(\lambda) \,.
    \end{equation}
    
    \item \textbf{Generative assumptions on signal:}
    The number of spikes $r$ and the intensities $\sigma_1,\ldots,\sigma_r>0$ are fixed as $n,m,p\to\infty$. The signal principal directions satisfy the following:
    \begin{enumerate}
        \item {\bf Right principal directions:} The matrix $\bV\in \RR^{n\times r}$ is such that $\bV^\T \bV \to \Id_{r\times r}$ almost surely. In other words, $\{\bv_1,\ldots,\bv_r\}$ are (asymptotically) orthonormal vectors.
        \item {\bf Left principal directions:} 
        The vectors $\bu_1,\ldots,\bu_r$ have the form\footnote{The Gaussianity of the vectors $\bw_i$ is not strictly necessary; one could replace it by any other isotropic distribution of sufficiently light tail.  }
        \begin{equation}\label{eq:ui-def}
            \bu_i = p^{-1/2}\bD_i \bw_i\,,\quad\textrm{for}\quad \bw_1,\ldots,\bw_r \sim \m{N}(\0,\bI_{p\times p}) \,.
        \end{equation}
        Furthermore, the (sequences of) matrices
        $\bD_i$ satisfy: 
        \begin{enumerate}
            \item \Revision{{\it Boundedness in operator norm:}} For some $C>0$, $\max_{1\le i \le r}\|\bD_i\|<C$ almost surely.
            \item {\it Unit energy:} $\lim_{p\to\infty}p^{-1}\|\bD_i\|_F^2=1$.
            \item {\it Limiting joint law:}  The algebra generated by $\{\bD_i\bD_i^\T,\bSigma,\bSigma^{-1}\}$ has a limiting joint law in the sense of free probability theory (see Section~\ref{sec:free-probability}).
        \end{enumerate}
        The following quantities will play an important role in our formulas:
        \begin{equation}\label{eq:tau-def}
            \LESDMomInv_i := \lim_{p\to\infty} p^{-1}\tr\left( \bD_i^\T \bSigma^{-1}\bD_i \right) {\,, \quad  1 \le i \le r.}
        \end{equation}
We assume that each $\tau_i$ is finite and strictly positive.
        \item {\bf Simple signal spectrum:} The following ``effective'' spike intensities contain no multiplicities. By way of notation, they are ordered as $\sqrt{\LESDMomInv_1} \sigma_1 > \ldots > \sqrt{\LESDMomInv_r} \sigma_r$. We remark that this assumption is standard throughout the literature on singular value shrinkage, see for example \cite{shabalin2013reconstruction,nadakuditi2014optshrink,gavish2017optimal,leeb2021optimal,donoho2020screenot}. 
    \end{enumerate}
\end{enumerate}
    
    \begin{remark}
        Much of the existing literature on singular value shrinkage either assumes isotropic noise ($\bSigma=\bI_{p\times p})$ or an isotropic prior ($\bD_i=\bI_{p\times p}$) on the left signal directions $\bu_1,\ldots,\bu_r$ (cf. \cite{shabalin2013reconstruction,nadakuditi2014optshrink,gavish2017optimal,donoho2020screenot}). Similar to \cite{leeb2021optimal}, we relax this assumption by allowing in (\ref{eq:ui-def}) for an alignment between the signal direction and the noise, whose ``strength'' is captured by the parameter $\LESDMomInv_i$ from (\ref{eq:tau-def}). Note that (\ref{eq:ui-def}) implies that $\bu_i^\T \bu_j\to \Ind\{i=j\}$ a.s., thus the $\bu_i$-s may be interpreted as the principal components of $\bX$ (though this statement is only precise asymptotically). In the language of factor analysis, the $\bv_i$-s may be interpreted as the factor values \cite{anderson1984, anderson2003introduction, schervish1987review, dobriban2017factor}.
    \end{remark}

    \begin{remark}
        Throughout the paper we assume that $\Zin,\Zout$ have Gaussian entries; this assumption is used explicitly in the proofs (specifically, the bi-orthogonal invariance of these matrices). In Section~\ref{sec:numerical} we give numerical evidence indicating that our results should be universal with respect to the noise distribution: they continue to hold when $\Zin,\Zout$ are i.i.d. with sufficiently light-tailed isotropic entries. 
    \end{remark}

\subsection{Whiten-Shrink-reColor (WSC) denoisers}

Our task is to estimate the signal matrix $\bX$ from the observed signal-plus-noise matrix $\bY = \bX + \Rin$ and the noise-only samples $\Rout$. We next describe the class of procedures we consider for this problem. 

Let $\hbSigma$ be an estimate of the covariance matrix $\bSigma$. Later, we will take $\hbSigma = \Rout\Rout^\T / m$, but for now any estimate would suffice. We use $\what \bSigma$ to \emph{pseudo-whiten} the noise on the observation matrix, constructing a new matrix $\bY^w$:
\begin{align}\label{eq:Yw-def}
\bY^w = \what{\bSigma}^{-1/2} \bY = \what{\bSigma}^{-1/2}\bU\bLambda\bV^\T + \frac{1}{\sqrt{n}}(\what{\bSigma}^{-1/2}\bSigma^{1/2})\Zin \,.
\end{align}
We consider the following family of estimators $\F$, computed from the SVD of $\bY^w$:
\begin{align}\label{eq:family-F}
\bY^w \overset{\mathrm{SVD}}{=} \sum_{k=1}^{\min\{p,n\}} \yval_k \what \bu_k^w \what \bv_k^w,\quad
\F = \left\{\sum_{k=1}^{\rhat} \eta_k \what \bSigma^{1/2} \what \bu_k^w \what \bv_k^w 
    : \eta_1,\dots,\eta_{\rhat} \in \R \right\},
\end{align}
where $\rhat$ denotes a data-driven estimator of $\rank(\bX)$, to be described in Section \ref{sec.estimator}.
Let $\what \bX^{\boldsymbol\eta} = \sum_{k=1}^{\rhat} \eta_k \what \bSigma^{1/2} \what \bu_k^w \what \bv_k^w $, where $\boldsymbol \eta = (\eta_1,\dots,\eta_{\hat r})$. It will be shown later (in Section \ref{sec:derivation_shrinker}) that for any deterministic $\boldeta$, the asymptotic loss
\begin{align}
\AMSE(\boldsymbol \eta) = \lim_{p \to \infty} \| \bX - \what \bX^{\boldeta} \|_{\Fr}^2
\end{align}
almost surely exists, and is finite. The goal, then, is to find $\eta_1,\dots,\eta_r$ so to minimize the asymptotic loss:
\begin{align}
\boldeta = \argmin_{\wtilde \boldeta} \AMSE(\wtilde \boldeta).
\end{align}
%
We will derive the optimal choice of $\eta_1,\dots,\eta_{\rhat}$ in Section \ref{sec:derivation_shrinker}.


\begin{remark}
The values $\eta_1,\dots,\eta_{\what r}$ are known as the generalized singular values of $\what \bX$ with respect to the matrix $\hbSigma^{-1}$; correspondingly, the vectors $\hbSigma^{1/2}\hbu_1^w,\dots,\hbSigma^{1/2}\hbu_r^w$, $\hbv_1^w,\dots,\hbv_r^w$ are the generalized singular vectors \cite{vanloan1976generalizing}.
That is, 
$\hbSigma^{1/2}\hbu_1^w,\dots,\hbSigma^{1/2}\hbu_r^w$ are orthonormal with respect to the weighted inner product $\langle \bx, \tilde{\bx} \rangle_{\hbSigma^{-1}} = \bx^\T \hbSigma^{-1} \tilde{\bx}$.
\end{remark}

An equivalent formulation of the estimator, which may be more natural, is given as follows. Define
\begin{align}
\widehat{\bu}_k
    =\frac{\what{\bSigma}^{1/2}\what{\bu}^{w}_k}{\|\what{\bSigma}^{1/2}\what{\bu}^{w}_k\|},\quad 
    \widehat{\bv}_k = \widehat{\bv}_k^w
\quad 1 \le k \le r,
\end{align}
so that 
we may write the estimator in the form
\begin{align}
\label{eq:approximate_SVD}
\what \bX = \sum_{k=1}^{r} t_k \what \bu_k \what \bv_k,
\quad\textrm{where} \quad
t_k = \|\what{\bSigma}^{1/2}\what{\bu}^{w}_k\| \cdot \eta_k .
\end{align}
As we will show in Theorem \ref{thm:products}, the vectors $\what \bu_1,\dots,\what \bu_r$ are asymptotically orthonormal as $p,n,m \to \infty$; consequently, equation \eqref{eq:approximate_SVD} is an approximate SVD of $\what \bX$, with approximate singular values $t_1,\dots,t_r$.
The vectors $\hbu_1,\dots,\hbu_r$ may be interpreted as estimates of the  population principal components, $\bu_1,\dots,\bu_r$. Accordingly, we will refer to $\hbu_1,\dots,\hbu_r$ as the \emph{empirical principal components}.

\textbf{Rationale for the estimation procedure.}
The estimator family $\F$ in (\ref{eq:family-F}) was studied in the authors' previous work \cite{leeb2021optimal} when the estimator $\what \bSigma$ of $\bSigma$ is asymptotically consistent in operator norm as $p,n,m \to \infty$; in the setting of the present paper, this occurs when, {for example, $\hbSigma$ is a sample covariance and} $p/m \to 0$. In this setting, it is shown that under a uniform prior on the singular vectors of $\bX$, the estimator $\what \X$ outperforms the optimal singular value shrinkage estimator (OptShrink) described in \cite{nadakuditi2014optshrink}. It follows that so long as $\what \bSigma$ is sufficiently close to $\bSigma$, the optimal $\what\bX$ will outperform OptShrink as well.

\subsection{Key definitions}

\label{sec:key_definitions}

We introduce several parameters and functions that will be used to evaluate the optimal $\eta_1,\dots,\eta_r$. \Revision{Their significance will be explained in Section \ref{sec:background-F}; for now, we simply present 
the relevant mathematical formulae.
} The key parameters and functions, along with others that are introduced later in the paper, are summarized in Tables \ref{table:functions} and \ref{table:parameters}, respectively.

Define the following values:
\begin{align}
\label{eq:sigma-thresh}
    \sigmaThresh = \sqrt{\frac{\beta+\sqrt{\beta+\gamma-\beta\gamma}}{1-\beta}} \,,
\end{align}
and
\begin{align}
\label{eq:sigma-bulk-min}
    \sigmaBulk = \frac{1+\sqrt{\beta+\gamma-\beta\gamma}}{1-\beta} \,,\quad 
    \sigmaMin = \frac{1-\sqrt{\beta+\gamma-\beta\gamma}}{1-\beta} \,. 
\end{align}
The quantities $\sigmaBulk,\sigmaMin$ are the almost-sure limits of, respectively the largest and smallest non-zero singular values of $\widehat{\bSigma}^{-1/2}\bY$ in the pure-noise case, that is, when there are no spikes. The value $\sigmaThresh$ is the smallest  singular value of $\bX$ that can be reliably detected as $p \to \infty$  in the case where $\bSigma = \bI_p$ . We discuss this in more detail in Section~\ref{sec:main_results-theory} below.


On the ray $(\sigmaBulk^2,\infty) \subset \RR$, we define the  \emph{Stieltjes transform of Wachter's distribution} as follows:
\begin{align}\label{eq:stieltjes-def}
\stiel(z) = \frac{1}{\gamma z} - \frac{1}{z} - \frac{\gamma\left(z(1-\beta)+(1-\gamma)\right)+2\beta z-\gamma\sqrt{\left(z(1-\beta) + (1-\gamma)\right)^2-4z}}{2\gamma z(\gamma+\beta z)}.
\end{align}
We also define the \emph{associated Stieltjes transform of Wachter's distribution} as follows:
\begin{align}\label{eq:stieltjes-assoc-def}
\sbar(z)= -\frac{\gamma\left(z(1-\beta)+(1-\gamma)\right)+2\beta z-\gamma\sqrt{\left(z(1-\beta) + (1-\gamma)\right)^2-4z}}{2 z(\gamma+\beta z)}.
\end{align}

We also define a related function on the same domain $(\sigmaBulk^2,\infty)$:
\begin{align}
\label{eq:res-def}
\res(z) = - \frac{z(1-\beta)-(1-\gamma)-\sqrt{\left((1-\beta)z+(1-\gamma)\right)^2-4z}}
    {2(\gamma+\beta-\gamma\beta)z} \,.
\end{align}
%

It is also convenient to define
\begin{align}
\label{eq:psi}
\psi(z)=z \cdot \sbar(z) \cdot \res(z)
=\frac{(1-\beta)z-1-\gamma-\sqrt{\left(z(1-\beta) + (1-\gamma)\right)^2-4z}}{2(\beta z + \gamma    )} \,,
\end{align}
and the function
\begin{align}
\varphi(z) = z\cdot |\sbar(z)| \cdot \left( \stiel(z)\right)^2 \,.
\end{align}


We also define the \emph{spike-forward} map $\Xi:[\sigmaThresh,\infty) \to [\sigmaBulk,\infty)$ by
\begin{align}
\label{eq:spike-forward-in-thm}
    \Xi(\sigma) = \sqrt{ \frac{(1+\sigma^2)(\gamma + \sigma^2)}{(1-\beta)\sigma^2-\beta} } \,.
\end{align}
Note that $\Xi(\cdot)$ is strictly increasing and smooth. Its inverse 
$\Xi^{-1}\,:\,[\sigmaBulk,\infty) \to [\sigmaThresh,\infty)$ is
\begin{align}
\Xi^{-1}(\theta) = \frac{1}{\sqrt{\psi(\theta^2)}} \,,
\end{align}
where $\psi(z)$ is defined in \eqref{eq:psi}.

Let $\MPStiel_\beta(\cdot)$ be the Stieltjes transform of the Marchenko-Pastur law with shape parameter $\beta$. {When $0<z<(1-\sqrt{\beta})^2$, namely it is smaller than the left edge edge of the Marchenko-Pastur bulk, it is given by the following formula}, cf. \cite[Lemma 3.11]{bai2009book}:
\begin{equation}
    \label{eq:MPStiel-def}
    \MPStiel_\beta(z) = \frac{1-\beta-z - \sqrt{(z-1-\beta)^2-4\beta }}{2\beta z} \,.
\end{equation}
Denote the functions $\UpsOne(\cdot),\UpsTwo(\cdot)$,  $z\in (\sigmaBulk^2,\infty)$,
\begin{align}
\label{eq:New:UpsOneFormula}
    &\UpsOne(z) = \frac{1}{z}\left[ 1 - \sbar(z) + (\sbar(z))^2\MPStiel_\beta(-\sbar(z))  \right]  \\
    \label{eq:New:UpsTwoFormula}
    &\UpsTwo(z) = \frac{1 + \gamma(\res(z) + z\res'(z))}{z^2} \left [ 1 - 2\sbar(z)\MPStiel_\beta(-\sbar(z)) + (\sbar(z))^2\MPStiel_\beta'(-\sbar(z)) \right]\,,
\end{align}
where $\MPStiel_\beta'(\cdot)$ is the derivative of $\MPStiel_\beta(\cdot)$. {One can verify that whenever $z\in (\sigmaBulk^2,\infty)$, one has $0<-\sbar(z)<(1-\sqrt{\beta})^2$ and so (\ref{eq:MPStiel-def}) is indeed applicable. We show this explicitly in Lemma \ref{lem:range_sbar}, Appendix~\ref{sec:New:Ups}. }

Finally, define the function
\begin{align}
\Efun(z) = -\gamma|\res(z)|\cdot  \left(  \UpsOne(z) + z\UpsOne'(z) \right) 
    + z\cdot |\sbar(z)|\cdot \left( \UpsTwo(z)-\stiel(z)^2 \right)\,.
\end{align}

\subsection{Detailed description of the estimator}
\label{sec.estimator}

We now give the exact details of our proposed estimator, including estimates of the formulas for the optimal choice of $\eta_1,\dots,\eta_r$. This process will be derived formally in Section \ref{sec:derivation_shrinker}; we present it now for the reader's convenience. 

\begin{table}
\centering
\begin{tabular}{| c | c  |  }
\hline  
 Symbol &  Description 
\\
\hline
$\stiel$ & Stieltjes transform 
\\
\hline
$\sbar$ & Associated Stieltjes transform 
\\
\hline
$\res$ & Resolvent-like function 
\\
\hline
$\psi$ & $\psi(z)=z \cdot \sbar(z) \cdot \res(z)$  
\\
\hline
$\varphi$ & $\varphi(z) = z\cdot |\sbar(z)| \cdot \left( \stiel(z)\right)^2 $ 
\\
\hline
$\Xi$ & Spike-forward map 
\\
\hline
$\Xi^{-1}$ & Spike-backward map 
\\
\hline
$\UpsOne$, $\UpsTwo$, $\Efun$ & Auxiliary functions  
\\
\hline
\end{tabular}
\caption{Functions used in this paper. 
}
\label{table:functions}
\end{table}

\begin{table}
\centering
\begin{tabular}{| c | c  |  }
\hline
 Symbol(s) &  Description    \\
\hline
$\sigmaThresh$ & Detection threshold      \\
\hline
$\sigmaMin$ & Left bulk edge    \\
\hline
$\sigmaBulk$ & Right bulk edge    \\
\hline
$\yval_1,\dots,\yval_r$ & Singular values of $\bY^w$     \\
\hline
$\sigma_1,\dots,\sigma_r$ & Singular values of $\bX$    \\
\hline
$c_1,\dots,c_r$ & Left weighted inner products     \\
\hline
$\underline{c}_1,\dots,\underline{c}_r$ & Right inner products     \\
\hline
$\tau_1,\dots,\tau_r$ & Signal/noise alignments    \\
\hline
$\mu$ & Noise covariance trace     \\
\hline
$p$ & Dimensionality     \\
\hline
$n$ & Number of signal-plus-noise samples     \\
\hline
$m$ & Number of noise-only samples     \\
\hline
$\gamma$ & Signal-plus-noise aspect ratio $p/n$    \\
\hline
$\beta$ & Noise-only aspect ratio $p/m$     \\
\hline
$\eta_1,\dots,\eta_r$ & Optimal generalized singular values     \\
\hline
$t_1,\dots,t_r$ & Optimal approximate singular values     \\
\hline
\end{tabular}
\caption{Parameters used in this paper.}
\label{table:parameters}
\end{table}

\paragraph{Inputs:} 
\begin{itemize}

\item The data matrix $\bY \in \RR^{p\times n}$ to be denoised.

\item $m>p$ samples of pure noise $\Rout \in \RR^{p\times m}$.

\item A small parameter $\epsilon>0$.

\end{itemize}

\paragraph{Algorithm:}
\begin{enumerate}
    
\item Form the sample covariance of pure noise samples $\widehat{\bSigma} = \Rout \Rout^\T / m$. 

\item Estimate the normalized trace of the noise covariance:
\begin{align}
\widehat{\mu} = \frac{1}{p}\trace\widehat{\bSigma}.
\end{align}

\item Form the matrix $\bY^w = \hbSigma^{-1/2}\bY$ and take its SVD,
\begin{align}
    \bY^w = \sum_{i=1}^{\min(n,p)} \yval_i \widehat{\bu}^w_i (\widehat{\bv}_i^w)^\T \,.
\end{align}

\item Estimate the "effective" signal rank\footnote{It will shown later that for any fixed $\epsilon>0$, we have $\rhat(\epsilon)\le r$ asymptotically almost surely; furthermore, for $\epsilon$ sufficiently small, $\rhat(\epsilon)$ will be exactly the number of spikes $r^*$ whose intensity is strong enough so as to not be ``swallowed'' completely by the noise. }
\begin{align}\label{eq:rank-estimation}
\rhat = \max \left\{ 1\le i \le n \,:\,\yval_i > \sigmaBulk+\epsilon \right\}\,,
\end{align}
(if the set is empty, set $\rhat=0$), where $\sigmaBulk$ appears in (\ref{eq:sigma-bulk-min}).

\item Estimate the parameters $\tau_1,\dots,\tau_{\rhat}$ via
\begin{align}
\label{eq.estimate.tau}
\widehat \tau_j
    = \frac{\varphi(\yval_j^2)}{|\psi'(\yval_j^2)|\|\hbSigma^{1/2} \hbu_j\|^2 - \widehat \mu  \Efun(\yval_j^2) },
\end{align}
\Revision{where $\psi'(\cdot)$ denotes the derivative of $\psi(\cdot)$.}

\item 
For $1 \le k \le \rhat$, estimate the following parameters:
\begin{align}
\what \sigma_k = \frac{1}{\sqrt{\widehat{\tau_k}}}\Xi^{-1}(\what \theta_k),
\quad
\what c_k =
\sqrt{\frac{1}{\what \tau_k} \cdot 
\frac{\varphi(\yval_k^2)}{\psi'(\yval_k^2)} },
\quad
\underline{\what c}_{k} =
\sqrt{\sbar(\yval_k^2)\cdot \frac{\psi(\yval_k^2)}{\psi'(\yval_k^2)}}.
\end{align}

\item Define
\begin{align}
\what \eta_k = \frac{\what \sigma_k \what c_k \underline{\what c}_k}{\|\what \bSigma^{1/2} \what \u_k^\wh\|^2}.
\end{align}
Return the estimator
\begin{align}
\what \X = \sum_{k=1}^{\rhat} \what \eta_k  \what \bSigma^{1/2} \what \u_k^\wh (\what \v_k^w)^\T,
\end{align}
and the estimated AMSE:
\begin{align}
\what \AMSE = \sum_{k=1}^{r} \what \sigma_k^2 \left(1-\what c_k^2\underline{\what c}_k^2 
        \frac{1}{\|\hbSigma^{1/2} \what \u_k^\wh\|^2} \right) .
\end{align}

\end{enumerate}

\Revision{
\begin{remark}
    In lieu of the ``effective rank'' estimator \eqref{eq:rank-estimation}, one might assume access to a known upper bound $r\le \overline{r}$ and set $\eta_k=0$ ($1\le k \le \overline{r}$) for every observed singular value with $\yval_k \le \sigmaBulk$. When $\overline{r}$ is constant as $n,p,m\to\infty$, the resulting estimator has asymptotically optimal MSE, in the same sense as described in Section~\ref{sec:derivation_shrinker}.
\end{remark}
}

\section{Main results}
\label{sec:main_results}

In this section, we provide the mathematical results that justify the algorithm in Section \ref{sec.estimator}. Specifically, we provide formulas for the quantities necessary to estimate the asymptotically  optimal coefficients  $\eta_1,\dots,\eta_{\rhat}$.

\subsection{The limiting spectrum of a spiked F-matrix}
\label{sec:main_results-theory}

As mentioned before, the random matrix $\bY^w$ is an instance of a spiked F-matrix. The results of this section quantify the relevant phenomenology surrounding these matrices, namely we: 1) compute the spike detection threshold, and a formula for the singular value displacement (spike-forward) map; 2) compute the limiting cosines between the population and empirical PCs. These are the two components necessary to derive the optimal shrinkage rule, as presented above; we do this in Section~\ref{sec:derivation_shrinker}.


We start with a limiting formula for the singular values of $\bY^w$:
\begin{theorem}
\label{thm:detectability-pt}
Set $r^* = \max \left\{ 1 \le k \le r \,: \,\sqrt{\LESDMomInv_k}\sigma_k > \sigmaThresh \right\}$.
%
Then, for any fixed $k$, almost surely,
\begin{align}
\lim_{p\to\infty} \yval_{k} = y_k :=  \begin{cases}
\Xi(\sqrt{\tau_k}\sigma_k ) \quad&\textrm{ if } 1\le k \le r^* \\
\sigmaBulk \quad&\textrm{ if } k > r^* 
\end{cases} \,.
\end{align}
\end{theorem}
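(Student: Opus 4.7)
The plan is to treat $\bY^w$ as a finite-rank additive perturbation of a noise matrix and run a standard BBP-type analysis adapted to the F-matrix setting. Writing $\bY^w = \bL + \bN$ with $\bL = \hbSigma^{-1/2}\bU\bLambda\bV^\T = \bP\bLambda\bV^\T$ (where $\bP := \hbSigma^{-1/2}\bU$) and $\bN = n^{-1/2}\hbSigma^{-1/2}\bSigma^{1/2}\Zin$, the first input is the pure-noise behavior of $\bN$: by Wachter's theorem and the F-matrix results surveyed in Section~\ref{sec:background-F}, the empirical singular value distribution of $\bN$ converges almost surely to the Wachter law with support $[\sigmaMin,\sigmaBulk]$, and no singular values escape the bulk. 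This immediately yields the case $k > r^*$ by Weyl's interlacing, once we show that the perturbation $\bL$ produces at most $r^*$ outliers.

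For the outliers, I would use the Benaych-Georges--Nadakuditi master-equation formalism. A value $z > \sigmaBulk$ is almost surely an outlier singular value of $\bY^w$ if and only if a $2r\times 2r$ determinantal equation vanishes, expressible in terms of the resolvent-bilinear forms
\begin{equation*}
\bP^\T (z^2\bI - \bN\bN^\T)^{-1}\bP,\quad \bV^\T(z^2\bI-\bN^\T\bN)^{-1}\bV,\quad \bP^\T\bN(z^2\bI - \bN^\T\bN)^{-1}\bV.
\end{equation*}
The next step is to show that each of these bilinear forms converges almost surely to an explicit deterministic limit. For $\bV^\T(\cdot)\bV$, bi-orthogonal invariance of $\Zin$ (conditional on $\hbSigma$, which is independent of $\bV$) together with $\bV^\T\bV\to\bI_r$ yields a limit proportional to $\bI_r$, with the scalar given by an appropriate Stieltjes transform of the Wachter law, i.e., $\stiel(z^2)$. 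For $\bP^\T(\cdot)\bP$, use $\bu_i = p^{-1/2}\bD_i\bw_i$ with $\bw_i$ standard Gaussian and independent of both $\hbSigma$ and $\Zin$; Hanson--Wright concentration then reduces the bilinear form to the trace $p^{-1}\tr(\bD_i^\T\hbSigma^{-1/2}(z^2\bI-\bN\bN^\T)^{-1}\hbSigma^{-1/2}\bD_i)$, and the joint free-probabilistic limit of $\{\bD_i\bD_i^\T,\bSigma,\bSigma^{-1}\}$ (plus the Marchenko--Pastur deterministic equivalent for $\hbSigma$) produces a limit of the form $\tau_i\cdot G(z^2)$ for an explicit function $G$ built from $\sbar$ and $\res$. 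Finally, the cross term $\bP^\T\bN(\cdot)\bV$ vanishes in the limit because $\bV$ is asymptotically orthonormal and independent of the column-span of $\bN$.

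With these three limits in hand, the $2r\times 2r$ master-equation block-decouples spike-by-spike (by the simple-spectrum assumption) into scalar equations of the form $\tau_k\sigma_k^2\cdot\psi(z^2) = 1$, where $\psi(z) = z\sbar(z)\res(z)$ as in \eqref{eq:psi}. Inverting this via $\Xi^{-1}(\theta) = 1/\sqrt{\psi(\theta^2)}$ gives $z = \Xi(\sqrt{\tau_k}\sigma_k)$. A solution above $\sigmaBulk$ exists exactly when $\sqrt{\tau_k}\sigma_k > \sigmaThresh$, because $\Xi$ is strictly increasing on $[\sigmaThresh,\infty)$ with $\Xi(\sigmaThresh) = \sigmaBulk$ (equivalently, $\psi(\sigmaBulk^2) = 1/\sigmaThresh^2$); this produces $r^*$ outliers at the claimed locations and shows that the remaining singular values stick to $\sigmaBulk$. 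The main obstacle I anticipate is the free-probability bookkeeping that identifies the deterministic limits of the $\bP^\T(\cdot)\bP$ forms in closed form in terms of the already-defined $\sbar, \stiel, \res$, and the accompanying algebraic simplification needed to collapse the master equation to the clean form $\tau_k\sigma_k^2\psi(z^2) = 1$; in particular, handling the joint dependence on $\hbSigma^{-1/2}$ inside both $\bP$ and $\bN$ requires working with a carefully chosen deterministic equivalent rather than naively replacing $\hbSigma$ by $\bSigma$.
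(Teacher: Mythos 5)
Your proposal follows essentially the same route as the paper: decompose $\bY^w = \bP\bLambda\bV^\T + \bN$, invoke Wachter's law and the no-escape result for the F-matrix bulk together with Weyl interlacing to pin down the non-outlier singular values, and then apply the Benaych-Georges--Nadakuditi $2r\times 2r$ determinantal master equation, whose entries you identify as resolvent bilinear forms that concentrate (via Hanson--Wright plus asymptotic freeness) to deterministic limits built from $\sbar$, $\res$; the equation then collapses spike-by-spike to $\tau_k\sigma_k^2\psi(z^2) = 1$, which is inverted by $\Xi$. This matches the paper's Lemma~\ref{lem:M-conv} and the surrounding argument; the only cosmetic differences are that the $\bV^\T(\cdot)\bV$ block's limit is governed by the associated transform $\sbar$ rather than $\stiel$, and the paper sidesteps the ``joint dependence through $\hbSigma$'' issue you flag by rewriting $\hbSigma^{-1/2}(y^2\bI - \bN\bN^\T)^{-1}\hbSigma^{-1/2} = \bSigma^{-1/2}(y^2\bS-\bE)^{-1}\bSigma^{-1/2}$ so that only one orthogonally-invariant resolvent appears.
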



Theorem~\ref{thm:detectability-pt} identifies a detectability phase-transition: a spike can be identified consistently by looking at the leading empirical singular value (as an outlier, separated from the ``main bulk'' of other singular values) whenever its ``effective intensity''
$\sqrt{\tau_i}\sigma_i$ exceeds the threshold $\sigma_{\mathrm{thresh}}$.\footnote{Note that Theorem~\ref{thm:detectability-pt} does not imply that detection is impossible below the threshold. Detection in this so-called ``sub-critical regime'' was studied in \cite{johnstone2015testing}, and  their results imply that {consistent} detection is indeed impossible (assuming $\bSigma=\bI$ and uniformly random signal spikes). } When this is the case, this effective intensity can in fact be estimated consistently from the data as $\Xi^{-1}(\yval_i)$. 
Figure \ref{figure.thresholds} plots the detection threshold $\sigmaThresh$ as a function of $\beta$, for different values of $\gamma$.

\begin{figure}
\center
\includegraphics[scale=.3]{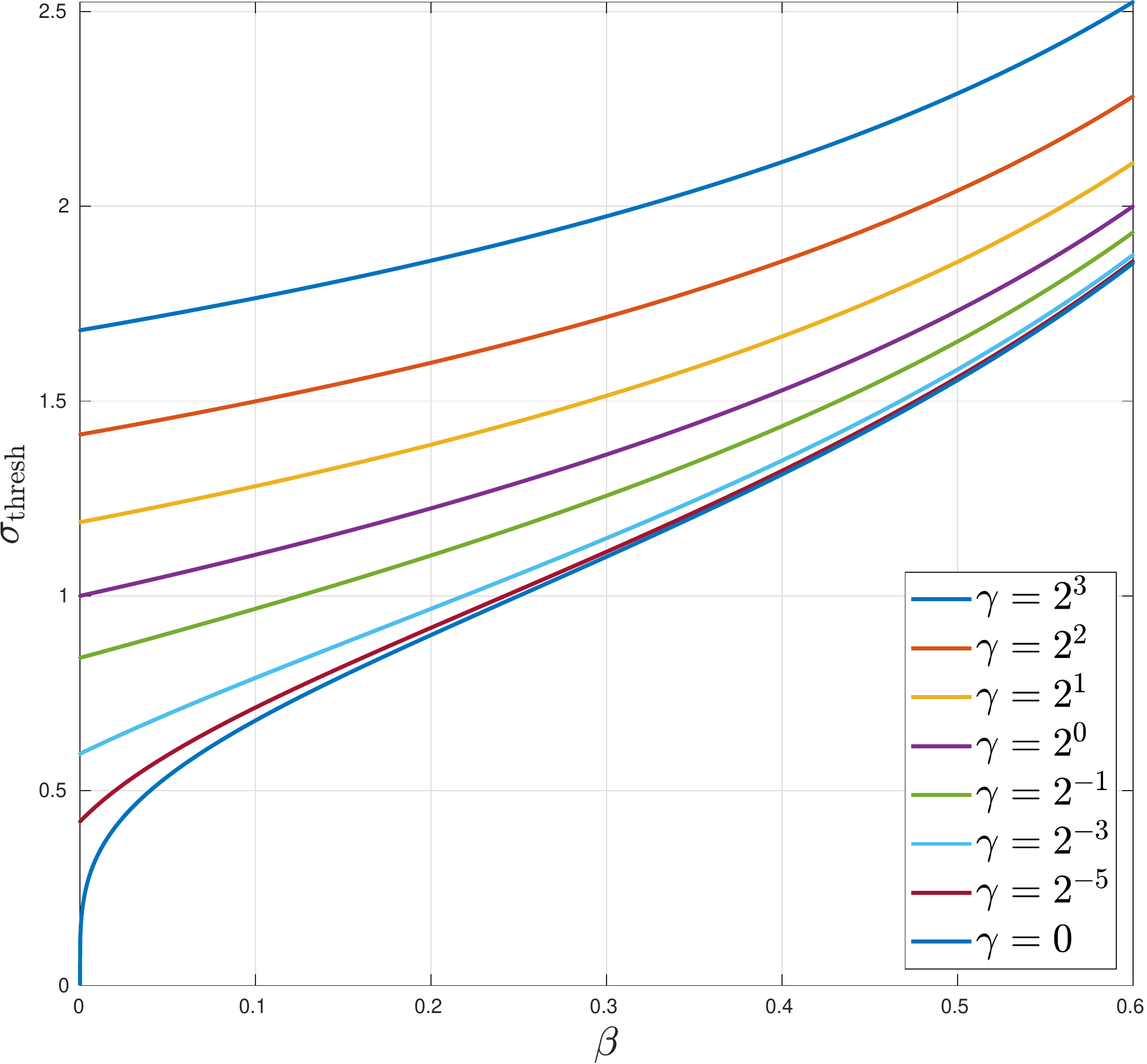}
\caption{The value of $\sigmaThresh$ as a function of $\beta$, for different values of $\gamma$.}
\label{figure.thresholds}
\end{figure}

The result in Theorem~\ref{thm:detectability-pt} is not new. To our knowledge, it first appeared in \cite{nadakuditi2010fundamental}, where it is stated for $\bSigma=\bI$; an extension for the non-white case is straightforward. 
More sophisticated results have since appeared in the literature. For a spike above the detectability threshold, assuming $\bSigma=\bI$ and Gaussian $\Zin,\Zout$, \cite{dharmawansa2014local} proved a CLT for the empirical singular values: $\{ \sqrt{p}(\yval_k-y_k) \}_{i\le r^*}$ converge jointly to a centered multivariate Gaussian (with an explicitly given covariance matrix). This result was extended by \cite{wang2017extreme}, which only assumed $\Zin,\Zout$ with independent entries and finite 4-th moment. Another related work is \cite{johnstone2017roy} which studied, assuming Gaussian noise, the distribution of the leading empirical singular values in the non-asymptotic (finite $n$), high SNR regime ($\sigma\to\infty$) by perturbation theory methods. A follow-up work \cite{dharmawansa2019roy} extended the analysis for complex Gaussian matrices.
Lastly, \cite{xie2021limiting} studied the leading eigenvalues in a model where the numbers of spikes $r$ is divergent (but not too large compared to $n,p,m$). 
Although Theorem~\ref{thm:detectability-pt} not new, in Section \ref{sec:proof_detectability} we will nonetheless provide a self-contained proof, which shall also function as an important preparatory step towards proving Theorem~\ref{thm:products} below. 

Lastly, Theorem~\ref{thm:detectability-pt} readily justifies the rank estimation procedure described in (\ref{eq:rank-estimation}). It implies that for any fixed small enough $\eps$, specifically, $0<\eps < \Xi(\sqrt{\tau_{r^*}}\sigma_{r^*} )-\sigmaBulk$, one has $\lim_{p\to\infty}\rhat(\eps)= r^*$ almost surely. We remark in passing that one may in fact choose $\eps=o(1)$, so to always ensure (asymptotically) consistent rank estimation. Indeed, when there is no signal ($\bX=\0$) the stochastic fluctuations $n^{2/3}(\lambda_1(n^{-1}\Y \Y^\T) - \sigmaBulk)$  converge in distribution to a Tracy-Widom law \cite{johnstone2008multivariate,han2016tracy}. In particular, in the presence of spikes, one may verify (e.g. by singular value interlacing) that $\lambda_{r+1}(n^{-1}\Y^\T \bY) = \sigmaBulk+O_{\Prob}(n^{-2/3})$. Consequently, for any vanishing $\eps=\omega(n^{-2/3})$, a.s. $r^*\le \liminf_{p\to\infty} r^*(\eps)\le \limsup_{p\to\infty}r^*(\eps)\le r$.
\newline

%

The next theorem calculates the limiting cosines between the population signal spikes and their empirical counterparts:


\begin{theorem}
\label{thm:products}
Let $1 \le k \le r$, and set $y_k=\Xi(\sqrt{\tau_k}\sigma_k)$. Suppose $\sqrt{\tau_k}\sigma_k > \sigmaThresh$. Then
\begin{align}
\label{eq:cos}
c_k^2 \equiv\lim_{p\to\infty}\left( \bu_k^\T\hbSigma^{1/2}\hbu_k^w \right)^2
=
\frac{1}{\tau_k} \cdot \frac{y_k^2\left(\stiel(y_k^2)\right)^2\sbar(y_k^2)}{\psi'(y_k^2)} \,,
\end{align}
\begin{align}
\label{eq:cos2}
\underline{c}_{k}^2 \equiv \lim_{p\to\infty} \left(\bv_k^\T\hbv_k\right)^2
=
\sbar(y_k^2)\cdot \frac{\psi(y_k^2)}{\psi'(y_k^2)},
\end{align}
\begin{align}
\label{eq:lem:post-corr-2}
\lim_{p\to\infty} \left(\bu_k^\T\hbSigma^{1/2}\hbu_k^w\right) \cdot \left(\bv_k^\T\hbv_k\right) 
=
-\frac{1}{\sqrt{\tau_k}} \cdot \frac{y_k\stiel(y_k^2)\sbar(y_k^2)\cdot \sqrt{\psi(y_k^2)}}{\psi'(y_k^2)}
\ge 0,
\end{align}
\begin{align}
\label{eq:thm-norm}
\lim_{p\to\infty}\|\hbSigma^{1/2}\hbu_k^w\|^2 
= \frac{1}{|\psi'(y_k^2)|}\left[ \mu\cdot \Efun(y_k^2) + \frac{1}{\tau_k} \varphi(y_k^2) \right] \,,
\end{align}
and
\begin{align}
\lim_{p\to\infty}\left(\bu_k^\T \hbu_k\right)^2 = \frac{\varphi(y_k^2)}{\tau_k\mu\cdot \Efun(y_k^2) + \varphi(y_k^2)} \,,
\end{align}
where the limits hold almost surely. Moreover, if $1 \le k,l \le r$ and $k \ne l$, then
\begin{align}\label{eq:cos-off}
\lim_{p\to\infty}\left( \bu_k^\T\hbSigma^{1/2}\hbu_l^w \right)^2
=
\lim_{p\to\infty} \left(\bv_k^\T\hbv_l\right)^2
=
0 \,.
\end{align}
%

\end{theorem}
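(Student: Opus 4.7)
My plan is to recast $\bY^w = \bN + \tilde\bU\bLambda\bV^\T$, where $\bN = n^{-1/2}\hbSigma^{-1/2}\bSigma^{1/2}\Zin$ and $\tilde\bU = \hbSigma^{-1/2}\bU$, as a rank-$r$ additive perturbation of a noise F-matrix, and adapt the resolvent method of \cite{benaych2012singular}. Eliminating one singular vector at a time from $\bY^w\hbv^w_k=\yval_k\hbu^w_k$ and $(\bY^w)^\T\hbu^w_k=\yval_k\hbv^w_k$, and using $\bV^\T\bV\to\Id_r$, yields the identities
\[
\hbu^w_k = G(\yval_k^2)\bigl[\yval_k\,\tilde\bU\bLambda\vec{b}_k + \bN\bV\bLambda\vec{a}_k\bigr] + o(1),\qquad \hbv^w_k = H(\yval_k^2)\bigl[\yval_k\,\bV\bLambda\vec{a}_k + \bN^\T\tilde\bU\bLambda\vec{b}_k\bigr] + o(1),
\]
where $G(z)=(z\Id_p-\bN\bN^\T)^{-1}$, $H(z)=(z\Id_n-\bN^\T\bN)^{-1}$, $\vec{a}_k:=\tilde\bU^\T\hbu^w_k$, $\vec{b}_k:=\bV^\T\hbv^w_k$. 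Projecting back onto $\tilde\bU$ and $\bV$ converts these into a closed linear system for $(\vec{a}_k,\vec{b}_k)$ whose coefficients are bilinear forms of $G,H$ on the signal directions; the cosines of Theorem~\ref{thm:products} are read off once that system is solved.

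Next, since $\bu_i=p^{-1/2}\bD_i\bw_i$ with $\bw_i$ Gaussian and independent of $(\Zin,\Zout)$, hence of every function of $(\bN,\hbSigma,G,H)$, Hanson--Wright and subgaussian concentration arguments kill the off-diagonal and cross bilinear forms: $\bu_j^\T\hbSigma^{\pm 1/2}G(z)\hbSigma^{\pm 1/2}\bu_\ell\to 0$ for $j\neq\ell$, and (using the orthogonal invariance of $\Zin$ together with $\bV^\T\bV\to\Id_r$) expressions such as $\bu_j^\T\hbSigma^{1/2}G(z)\bN\bV\be_\ell$ vanish for all $j,\ell$. The $2r$-dimensional system then decouples into scalar $2\times 2$ systems, one per spike $k\le r^*$, of the form $a_k\simeq\yval_k\sigma_k\bA_k b_k$ and $b_k\simeq\yval_k\sigma_k\bB_k a_k$, with diagonal coefficients $\bA_k=\lim p^{-1}\tr(\bD_k^\T\hbSigma^{-1/2}G(\yval_k^2)\hbSigma^{-1/2}\bD_k)$ and $\bB_k=\lim\bv_k^\T H(\yval_k^2)\bv_k$. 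The orthogonality relation \eqref{eq:cos-off} is immediate from the vanishing off-diagonal forms, while the non-triviality condition $\yval_k^2\sigma_k^2\bA_k\bB_k=1$ will recover the spike-forward identity of Theorem~\ref{thm:detectability-pt}.

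The main obstacle is computing these scalar trace limits. Since $\bN\bN^\T$ has the same eigenvalues as the F-matrix $\bE\bS^{-1}$, we have $p^{-1}\tr G(z)\to\stiel(z)$ and a corresponding limit for $H$ involving $\sbar(z)$, via the self-consistent equations recalled in Section~\ref{sec:background-F}. The ``dressed'' traces involving $\bD_k$ conjugated by $\hbSigma^{\pm 1/2}$ --- arising both in $\bA_k$ and in related quantities such as $\bu_k^\T\hbSigma^{1/2}G(z)\hbSigma^{-1/2}\bu_k$ and $\|\hbSigma^{1/2}\hbu^w_k\|^2$ --- are the central difficulty. The plan is to derive a deterministic equivalent for each sandwich $\hbSigma^{\alpha}G(z)\hbSigma^{\beta}$ with $\alpha,\beta\in\{\pm 1/2\}$ by invoking the limiting joint law of $\{\bD_i\bD_i^\T,\bSigma,\bSigma^{-1}\}$ together with the asymptotic freeness of the independent Wisharts $\bE,\bS$ from this algebra, and then using subordination/Schur-complement identities. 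Each such trace should reduce to a scalar function of $z$ times the single free-probabilistic invariant $\tau_k=\lim p^{-1}\tr(\bD_k^\T\bSigma^{-1}\bD_k)$, with the scalar factors assembling into precisely the $\res,\stiel,\sbar,\psi,\UpsOne,\UpsTwo,\Efun,\varphi$ of Section~\ref{sec:key_definitions}. I expect this bookkeeping --- identifying which joint-law invariants survive each distinct sandwich and matching each surviving scalar factor to the correct closed form --- to be the delicate core of the proof.

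With the trace limits in hand, the decoupled $2\times 2$ system for each $k\le r^*$ gives $\sigma_k^2\tau_k\,\psi(\yval_k^2)=1$, equivalently $\yval_k=\Xi(\sqrt{\tau_k}\sigma_k)$. Solving the system explicitly and imposing the normalizations $\|\hbu^w_k\|=\|\hbv^w_k\|=1$ then yields \eqref{eq:cos}, \eqref{eq:cos2}, and the signed product \eqref{eq:lem:post-corr-2}; the norm \eqref{eq:thm-norm} comes from substituting the expression for $\hbu^w_k$ into $(\hbu^w_k)^\T\hbSigma\,\hbu^w_k$ and reapplying the same catalogue of trace limits; finally, $(\bu_k^\T\hbu_k)^2$ follows by combining $c_k^2$ with \eqref{eq:thm-norm} through the recoloring identity $\hbu_k=\hbSigma^{1/2}\hbu^w_k/\|\hbSigma^{1/2}\hbu^w_k\|$.
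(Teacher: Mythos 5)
Your proposal matches the paper's proof strategy essentially step for step: decompose $\bY^w$ as the F-matrix noise $\bN$ plus the (pseudo-whitened) rank-$r$ perturbation $\hbSigma^{-1/2}\bU\bLambda\bV^\T$, adapt the resolvent/Schur-complement machinery of Benaych-Georges--Nadakuditi to write $\hbu_k^w,\hbv_k$ in terms of $(y^2\Id-\bN\bN^\T)^{-1}$, $(y^2\Id-\bN^\T\bN)^{-1}$ and the projections $\vec a_k=\bU^\T\hbSigma^{-1/2}\hbu_k^w$, $\vec b_k=\bV^\T\hbv_k$ (these are the paper's $\balpha_k,\bbeta_k$), kill the off-diagonal and cross terms via Gaussian concentration and orthogonal invariance, compute the dressed traces by free independence of the Wisharts $\bE,\bS$ from the algebra generated by $\{\bD_i\bD_i^\T,\bSigma,\bSigma^{-1}\}$, and pin down magnitudes via the normalizations $\|\hbu_k^w\|=\|\hbv_k\|=1$ (which in the paper becomes a linear $2\times 2$ system in the squared quantities $(\balpha_k)_k^2,(\bbeta_k)_k^2$). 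One small clarification worth flagging: the phrase ``the cosines are read off once that system is solved'' slightly elides the fact that $\vec a_k$ gives $\bu_k^\T\hbSigma^{-1/2}\hbu_k^w$, not the re-colored inner product $c_k=\bu_k^\T\hbSigma^{1/2}\hbu_k^w$ of \eqref{eq:cos}; the latter and $\|\hbSigma^{1/2}\hbu_k^w\|^2$ of \eqref{eq:thm-norm} require the additional dressed-trace computations you correctly identify as the delicate core (in the paper, the auxiliary functions $\UpsOne,\UpsTwo,\Efun$ arise precisely there, with the closed forms derived via a block-matrix/resolvent argument rather than literal subordination, though the two routes are interchangeable). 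With that caveat, the outline is sound and aligns with the paper.
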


The proof of Theorem \ref{thm:products} is found in Section \ref{sect:proof-precosines}.

\subsection{Derivation of the optimal shrinker}
\label{sec:derivation_shrinker}

Recall the form of the estimators (\ref{eq:family-F}) and (\ref{eq:approximate_SVD}). Equipped with Theorems \ref{thm:detectability-pt} and \ref{thm:products}, it is straightforward to evaluate the optimal weights $\eta_1,\dots,\eta_r$ in terms of the population parameters.

\begin{theorem}
\label{thm:shrinker}
Let $y_k=\Xi(\sqrt{\tau_k} \sigma_k)$, and suppose $\sqrt{\tau_k} \sigma_k > \sigmaThresh$, $1 \le k \le r$. Then the optimal $\boldeta = (\eta_1,\dots,\eta_r)$ that minimizes $\AMSE(\boldeta)$ is given by the following:
\begin{align}
\eta_k = \sigma_k \cdot c_k \cdot \underline{c}_k 
    \cdot |\psi'(y_k^2)
    |\left[ \mu\cdot \Efun(y_k) + \frac{1}{\tau_k} \varphi(y_k) \right]^{-1},
\end{align}
where $c_k$ and $\underline{c}_k$ are defined in \eqref{eq:cos} and \eqref{eq:cos2}, respectively. The optimal $t_1,\dots,t_k$ are given by
\begin{align}
t_k = \sigma_k \cdot c_k \cdot \underline{c}_k 
    \cdot |\psi'(y_k^2)
    |\left[ \mu\cdot \Efun(y_k) + \frac{1}{\tau_k} \varphi(y_k) \right]^{-1/2}.
\end{align}
The AMSE at these optimal values is equal to
\begin{align}
\label{eq:AMSE_formula}
\AMSE(\boldeta) = \sum_{k=1}^{r} \sigma_k^2 \left(1-c_k^2\underline{c}_k^2 
        |\psi'(y_k^2)| \left[ \mu\cdot \Efun(y_k) + \frac{1}{\tau_k} \varphi(y_k) \right]^{-1} \right) .
\end{align}
\end{theorem}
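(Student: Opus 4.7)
The plan is to expand $\|\bX - \what{\bX}^{\boldeta}\|_{\Fr}^2 = \|\bX\|_{\Fr}^2 - 2\langle \bX, \what{\bX}^{\boldeta}\rangle + \|\what{\bX}^{\boldeta}\|_{\Fr}^2$, pass to the limit term by term using Theorem~\ref{thm:products}, and then observe that the resulting AMSE is a separable, strictly convex quadratic in $(\eta_1,\ldots,\eta_r)$, which can be minimized coordinate-wise.

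First, $\|\bX\|_{\Fr}^2 = \sum_{k=1}^{r}\sigma_k^2$ deterministically. For the quadratic term, the key observation is that the right factors $\{\what{\bv}_k^w\}$ are exactly orthonormal at every finite $p$, so cross-terms collapse:
\begin{align*}
\|\what{\bX}^{\boldeta}\|_{\Fr}^2 = \sum_{k,l=1}^{r} \eta_k \eta_l \bigl((\what{\bv}_k^w)^\T \what{\bv}_l^w\bigr)\bigl((\what{\bu}_l^w)^\T \hbSigma\, \what{\bu}_k^w\bigr) = \sum_{k=1}^{r} \eta_k^2\, \|\hbSigma^{1/2}\what{\bu}_k^w\|^2,
\end{align*}
and by \eqref{eq:thm-norm} this converges a.s.\ to $\sum_k \eta_k^2 A_k$, where $A_k := |\psi'(y_k^2)|^{-1}\bigl[\mu\,\Efun(y_k^2) + \tau_k^{-1}\varphi(y_k^2)\bigr]$. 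For the cross term, expanding $\bX = \sum_i \sigma_i \bu_i \bv_i^\T$ yields
\begin{align*}
\langle \bX, \what{\bX}^{\boldeta}\rangle = \sum_{k=1}^{r} \eta_k \sum_{i=1}^{r} \sigma_i\, \bigl(\bu_i^\T \hbSigma^{1/2}\what{\bu}_k^w\bigr)\bigl(\bv_i^\T \what{\bv}_k^w\bigr).
\end{align*}
The off-diagonal ($i \ne k$) terms vanish in the limit by \eqref{eq:cos-off} (Cauchy--Schwarz kills either factor), while the diagonal ($i = k$) terms converge almost surely to $\sigma_k\cdot c_k \underline{c}_k$ by \eqref{eq:lem:post-corr-2}, with signs chosen so that the product is nonnegative.

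Assembling the three pieces,
\begin{align*}
\AMSE(\boldeta) = \sum_{k=1}^{r} \sigma_k^2 - 2\sum_{k=1}^{r} \eta_k \sigma_k c_k \underline{c}_k + \sum_{k=1}^{r}\eta_k^2 A_k,
\end{align*}
which decouples across $k$. Minimizing each scalar quadratic $A_k \eta_k^2 - 2\sigma_k c_k \underline{c}_k \eta_k$ gives $\eta_k^\star = \sigma_k c_k \underline{c}_k / A_k$, which rearranges to the stated formula once $A_k$ is substituted. The corresponding $t_k = \|\hbSigma^{1/2}\what{\bu}_k^w\|\cdot \eta_k^\star$ is obtained by multiplying by $\sqrt{A_k}$, and plugging $\eta_k^\star$ back into the quadratic yields $\AMSE(\boldeta^\star) = \sum_k \sigma_k^2 - \sum_k (\sigma_k c_k \underline{c}_k)^2 / A_k$, which is exactly \eqref{eq:AMSE_formula}.

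The main subtlety --- rather than a genuine obstacle, since Theorem~\ref{thm:products} supplies every limit needed --- is bookkeeping of signs: each pair $(\what{\bu}_k^w, \what{\bv}_k^w)$ is determined by the SVD only up to a common sign flip. Both the estimator $\what{\bX}^{\boldeta}$ and the signed product $(\bu_k^\T \hbSigma^{1/2}\what{\bu}_k^w)(\bv_k^\T \what{\bv}_k^w)$ are invariant under this flip, and the nonnegativity guaranteed by \eqref{eq:lem:post-corr-2} ensures that $c_k \underline{c}_k \ge 0$ with a consistent sign convention, so the quadratic's optimizer has the correct sign.
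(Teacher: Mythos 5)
Your proposal is correct and follows essentially the same route as the paper's own proof: expand the Frobenius loss, use the exact orthonormality of $\{\what{\bv}_k^w\}$ to collapse $\|\what{\bX}^{\boldeta}\|_F^2$ into $\sum_k \eta_k^2 \|\hbSigma^{1/2}\what{\bu}_k^w\|^2$, pass to the limit via Theorem~\ref{thm:products} (with \eqref{eq:cos-off} eliminating cross terms and \eqref{eq:lem:post-corr-2} giving the diagonal ones), and minimize the resulting separable quadratic coordinate-by-coordinate. Your closing remark on the SVD sign ambiguity is a detail the paper leaves implicit, but it does not change the argument.
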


\begin{proof}
Using the asymptotics in Theorem \ref{thm:products}, the mean squared error may be written as:
\begin{align}
\label{eq:5411}
\|\what \X - \X\|_F^2
&= \sum_{k=1}^{r} \left\{\eta_k^2 \| \what \bSigma^{1/2}\what \u_k^w \|^2
        - 2\eta_k \sigma_k (\bu_k^\T\hbSigma^{1/2}\hbu_k^w)  (\bv_k^\T\hbv_k) \right\}
+ \sum_{k=1}^{r} \sigma_k^2 
- \sum_{j \ne k} 2\eta_j \sigma_k (\bu_k^\T\hbSigma^{1/2}\hbu_j^w)  (\bv_k^\T\hbv_j)
\nonumber \\
&\asympeq \sum_{k=1}^{r} \left\{\eta_k^2 \frac{1}{|\psi'(y_k^2)|}\left[ \mu\cdot \Efun(y_k^2) + \frac{1}{\tau_k} \varphi(y_k^2) \right]
        - 2\eta_k \sigma_k c_k \underline{c}_k \right\} + \sum_{k=1}^{r} \sigma_k^2,
\end{align}
and minimizing this gives the desired expression for $\eta_k$. The optimal $t_k$ follow from $t_k = \eta_k \cdot \| \what \bSigma^{1/2}\what \u_k^w \|$, and \eqref{eq:thm-norm}. The formula for the AMSE is obtained substituting $\eta_k$ into the expression for the AMSE in \eqref{eq:5411}.
\end{proof}

The optimal $\eta_k$ may be consistently estimated using only the observed matrices $\bY$ and $\Rout$. Indeed, suppose that $\yval > \sigmaBulk$. 
Then from Theorems~\ref{thm:detectability-pt} and \ref{thm:products}, the values $\tau_k$ may be estimated via
\begin{align}
\widehat \tau_k
    = \frac{\varphi(\yval_k^2)}{|\psi'(\yval_k^2)|\|\hbSigma^{1/2} \hbu_k\|^2 - \widehat \mu  \Efun(\yval_k^2) }, \quad 1 \le k \le r.
\end{align}
while the population intensities $\sigma_k$ may be consistently estimated as
\begin{align}
\what \sigma_k = \frac{1}{\sqrt{\widehat{\tau}_k}} \Xi^{-1}(\yval_k), \quad 1 \le k \le r.
\end{align}
%
%
With these estimates, using Theorem \ref{thm:products} $c_k$ and $\underline{c}_k$ may be estimated as follows:
\begin{align}
\what c_k =
\sqrt{\frac{1}{\what \tau_k} \cdot \frac{\yval_k^2\left(\stiel(\yval_k^2)\right)^2\sbar(\yval_k^2)}{\psi'(\yval_k^2)} },\qquad
\underline{\what c}_{k} =
\sqrt{\sbar(\yval_k^2)\cdot \frac{\psi(\yval_k^2)}{\psi'(\yval_k^2)}}.
\end{align}
Finally, the factor
\begin{align}
|\psi'(y_k^2)| \left[ \mu\cdot \Efun(y_k) + \frac{1}{\tau_k} \varphi(y_k) \right]^{-1}
\end{align}
may be estimated as $\|\hbSigma^{1/2} \what \u_k^w\|^{-2}$. Putting these together, we estimate the optimal $\eta_k$ as
\begin{align}
\what \eta_k = \what \sigma_k \cdot \what c_k \cdot \underline{\what c}_k \,.
\end{align}
This completes the derivation of the optimal $\what \X $.

\section{Background on F-Matrices}
\label{sec:background-F}

Before moving on to the proofs of our main results, we briefly review several known result about the spectrum of the matrix $\Y^{w}$ in the absence of a signal. In other words, we consider the pseudo-whitened noise matrix $\N\in\R^{ p \times n}$
\begin{equation}\label{eq:N-def}
    \N = \frac{1}{\sqrt{n}}\hbSigma^{-1/2}\Rin = \frac{1}{\sqrt{n}}\hbSigma^{-1/2}\bSigma^{1/2}\Zin \,.
\end{equation}

Define the following Wishart matrices:
\begin{equation}
\label{eq:E_and_S}
    \bE = \Zin\Zin^\T/n,\quad \bS=\Zout\Zout^\T/m,
\end{equation}

Importantly, observe that the singular values of $\N$ do not depend on the population covariance $\bSigma$. To see this, write $\hbSigma=\bSigma^{1/2}\bS \bSigma^{1/2}$, and recall that the squared singular values of $\N$ are the (non-zero) eigenvalues of $\N^\T \N = \Zin^\T \bSigma^{1/2}\hbSigma^{-1}\bSigma^{1/2}\Zin = \Zin^\T \bS^{-1} \Zin$, which does not depend on $\bSigma$. Thus, when studying the singular values of $\N$ we may assume w.l.o.g. that $\bSigma=\Id$, as we shall do throughout this section. The non-zero eigenvalues of $\N^\T \N\in \R^{n\times n}$ and $\N\N^\T \in \R^{p\times p}$ are the same. With $\bE$ defined as in \eqref{eq:E_and_S}, we write $\N\N^\T = \bS^{-1/2}\bE \bS^{-1/2}$. Applying a similarity transformation, its eigenvalues are identical to those of the matrix
\begin{equation}\label{eq:fisher-def}
     \bF = \bS^{-1/2}\bE \bS^{1/2} = \bS^{-1}\bE \,.
\end{equation}
The random matrix ensemble (\ref{eq:fisher-def}) is known as an F-matrix/Fisher matrix; see, for example, \cite{bai2009book}. It is closely related to the 
multivariate Beta ensemble, which features in classical multivariate statistics, in particular Multivariate Analysis of Variance (MANOVA) \cite{muirhead2009aspects}.\footnote{
One can show that the eigenvalues of $\bF$ and the MANOVA matrix $(\bS+\bE)^{-1/2}\bE(\bS+\bE)^{-1/2}$ are bijectively mapped to one another via the mapping $z\mapsto z/(1+z)$.
}

We briefly recall some definitions. 
For a diagonalizable matrix $\bA\in \R^{p\times p}$, its empirical spectral distribution (ESD) is the counting measure of the eigenvalues: $\mu_{\bA}:= n^{-1}\sum_{i=1}^p \delta_{\lambda_i(\bA)}$. For a probability measure $\mu$ on $\RR$, its Stieltjes transform $s_{\mu}(\cdot)$ is the function 
\begin{equation}
    s_{\mu}(z) = \int_{-\infty}^\infty \frac{1}{\lambda -z }d\mu(z)\,,\quad \textrm{where}\quad z\in \CC\setminus \RR \,.
\end{equation}
The limiting empirical spectral distribution (LESD) for F-matrices was first derived by Wachter \cite{wachter1980limiting} with subsequent generalizations in \cite{yin1983limiting,silverstein1985limiting,silverstein1995empirical}; for a textbook reference, see \cite[Theorem 4.10]{bai2009book}.

\begin{theorem}
    [Wachter's distribution]
    \label{thm:wachter-density}
    Let $\sigmaMin,\sigmaBulk$ be as in (\ref{eq:sigma-bulk-min}). The ESD of the F-matrix (\ref{eq:fisher-def}) converges weakly almost surely to a deterministic distribution. When $\gamma\le 1$, the LESD is continuous, supported on $[\sigmaMin,\sigmaBulk]$ and has the density, \begin{equation}\label{eq:Wachter-density}
        F_{\gamma,\beta}(\lambda) = 
    \frac{(1-\beta)\sqrt{(\sigmaBulk^2-\lambda)(\lambda-\sigmaMin^2)}}{2\pi \lambda(\gamma+\beta \lambda)} \Ind\{\sigmaMin^2 \le \lambda \le \sigmaBulk^2\} \,.
    \end{equation}
    When $\gamma>1$ the LESD has a continuous density (\ref{eq:Wachter-density}), but also an atom at $\lambda=0$ with weight $1-\gamma^{-1}$.  
\end{theorem}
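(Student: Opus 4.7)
The plan is to prove Wachter's theorem via the Stieltjes transform of the LESD. Note that $\bF = \bS^{-1}\bE$ is similar to the symmetric matrix $\bS^{-1/2}\bE\bS^{-1/2} = \frac{1}{n}(\bS^{-1/2}\Zin)(\bS^{-1/2}\Zin)^\T$, so the two share identical nonzero spectra. Conditional on $\bS$, the latter is a sample covariance matrix formed from $n$ iid Gaussian columns with population covariance $\bS^{-1}$. Since $\bS$ and $\Zin$ are independent, $\beta\in(0,1)$ guarantees $\bS$ is a.s.\ invertible, and $\mu_\bS \to \mathrm{MP}_\beta$ by the classical Marchenko--Pastur theorem, the LESD of $\bS^{-1}$ converges almost surely to the pushforward of $\mathrm{MP}_\beta$ under $x \mapsto 1/x$. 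The Silverstein--Bai framework for sample covariance matrices with a random independent prior (cf.\ \cite[Ch.~4]{bai2009book}) then applies.

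Applying Silverstein's fixed-point equation, the companion Stieltjes transform $\sbar(z)$ of the LESD of $\bF$ satisfies
\begin{equation*}
z = -\frac{1}{\sbar(z)} + \gamma \int \frac{\lambda}{1 + \lambda \sbar(z)} dH(\lambda),
\end{equation*}
where $H$ is the pushforward LESD of $\bS^{-1}$. The change of variables $\mu = 1/\lambda$ rewrites the integral as $\int (\mu + \sbar(z))^{-1} d\mathrm{MP}_\beta(\mu) = \MPStiel_\beta(-\sbar(z))$. Substituting the closed-form expression (\ref{eq:MPStiel-def}) collapses the functional equation to a rational equation in $\sbar(z)$; clearing denominators yields a quadratic in $\sbar(z)$, whose solution (selecting the branch so that $\sbar(z) \to 0$ as $\mathrm{Im}(z) \to +\infty$) is precisely (\ref{eq:stieltjes-assoc-def}). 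The Stieltjes transform $\stiel(z)$ of the F-matrix's LESD is then recovered from $\sbar$ via the standard companion relation, yielding (\ref{eq:stieltjes-def}).

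The Stieltjes inversion formula $F_{\gamma,\beta}(\lambda) = \pi^{-1}\lim_{\eps \to 0^+}\mathrm{Im}\,\stiel(\lambda+i\eps)$ extracts the density. The imaginary part is nonzero precisely where the discriminant $((1-\beta)\lambda + (1-\gamma))^2 - 4\lambda$ is negative; expanding this quadratic in $\lambda$ and solving identifies its two positive roots as $\sigmaMin^2$ and $\sigmaBulk^2$ of (\ref{eq:sigma-bulk-min}), after which algebraic simplification yields the Wachter density (\ref{eq:Wachter-density}). The case $\gamma > 1$ is handled by observing that $\bE = \Zin\Zin^\T/n$ has rank $\min(p,n) = n < p$ almost surely, so $\bF$ inherits $p-n$ zero eigenvalues, contributing an atom at $0$ of mass $(p-n)/p \to 1 - \gamma^{-1}$; the absolutely continuous part of the LESD is unchanged.

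The main obstacle is algebraic rather than conceptual: careful branch-tracking for the square root when solving the quadratic for $\sbar$, and verification that the discriminant factors as a constant multiple of $(\sigmaBulk^2 - \lambda)(\lambda - \sigmaMin^2)$ so as to arrive at the compact form of (\ref{eq:Wachter-density}). The almost-sure (rather than in-probability) nature of the weak convergence follows from the standard Borel--Cantelli and concentration estimates that are already built into the version of Silverstein's theorem invoked here.
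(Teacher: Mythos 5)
The paper does not actually prove Theorem~\ref{thm:wachter-density}; it cites Wachter's original result and the textbook treatment in \cite[Theorem~4.10]{bai2009book}, and uses it as a black box. Your sketch supplies a genuine proof, and the route you choose — conditioning on $\bS$, recognizing $\bS^{-1/2}\bE\bS^{-1/2}$ as a sample covariance matrix with random independent population spectrum converging to the inverse-MP$_\beta$ law, and invoking Silverstein's self-consistent equation for the companion Stieltjes transform — is the standard one and is correct.

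A few points worth being precise about, though none is a real gap. First, Silverstein's theorem is stated for a (deterministic or random) population matrix whose spectral measure converges a.s.; the reduction here is by conditioning on the full sequence $(\bS_p)$, which is legitimate because $\bS\perp\Zin$ and $\mu_{\bS^{-1}}$ converges a.s.\ by the classical MP theorem and continuity of the pushforward under $x\mapsto 1/x$ on $(0,\infty)$ (the a.s.\ lower bound $\lambda_{\min}(\bS)\to(1-\sqrt\beta)^2>0$ is what makes the inversion harmless). Second, the change of variables turning $\int\frac{\lambda\,dH(\lambda)}{1+\lambda\sbar}$ into $\MPStiel_\beta(-\sbar)$ is exactly right, and substituting the quadratic satisfied by $\MPStiel_\beta$ (namely $\beta w\,\MPStiel_\beta(w)^2-(1-\beta-w)\MPStiel_\beta(w)+1=0$) into $z=-1/\sbar+\gamma\MPStiel_\beta(-\sbar)$ does produce a quadratic in $\sbar$ whose discriminant factors as $\gamma^2[(z(1-\beta)+(1-\gamma))^2-4z]$; the zeros of the bracket are $\left(\frac{1\pm\sqrt{\beta+\gamma-\beta\gamma}}{1-\beta}\right)^2=\sigmaMin^2,\sigmaBulk^2$, confirming the support and, after Stieltjes inversion and the companion relation $\sbar(z)=\gamma\stiel(z)-(1-\gamma)/z$, the density \eqref{eq:Wachter-density}. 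Your rank argument for the atom when $\gamma>1$ is also correct. So the sketch is sound; the only thing missing is carrying out the final algebra and branch-selection explicitly, which you have rightly flagged as routine.
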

Note that when $\beta=0$, (\ref{eq:Wachter-density}) collapses to the well-known Marchenko-Pastur law with shape $\gamma$.

Theorem~\ref{eq:Wachter-density} implies, in particular, that all but a vanishing fraction of the non-zero eigenvalues of $\bF$ are contained in $[\sigmaMin^2,\sigmaBulk^2]$. The next result, which follows from \cite[Theorem 1.1]{bai1998no}, implies that in fact, asymptotically almost surely there are no eigenvalues outside the support:
\begin{theorem}
    [Extreme eigenvalues of F-matrix]
    \label{thm:wachter-edge}
    Almost surely,
    \[
    \lim_{p\to\infty} \lambda_{\min}(\bF)=\sigmaMin^2,\quad \lim_{p\to\infty} \lambda_{\max}(\bF)=\sigmaBulk^2 \,.
    \]
\end{theorem}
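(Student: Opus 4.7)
The plan is to deduce the conclusion from two classical facts about F-matrices viewed as sample covariance matrices with random population covariance: (i) convergence of the empirical spectral distribution of $\bF$ to Wachter's law (Theorem~\ref{thm:wachter-density}); and (ii) the ``no eigenvalues outside the support'' principle of Bai and Silverstein \cite{bai1998no}, applied to $\bF$ after conditioning on $\Zout$.

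First I would establish the ``inside'' direction. Theorem~\ref{thm:wachter-density} gives that the ESD of $\bF$ converges weakly a.s.\ to Wachter's distribution, which has $\sigmaMin^2$ as its left edge (or as the left edge of the continuous part when $\gamma>1$) and $\sigmaBulk^2$ as its right edge. By weak convergence applied to a continuous test function supported in $(\sigmaBulk^2-\delta,\infty)$ for arbitrary $\delta>0$, infinitely many eigenvalues of $\bF$ accumulate near $\sigmaBulk^2$, yielding $\liminf_p \lambda_{\max}(\bF)\ge \sigmaBulk^2$ a.s.; an analogous argument at the left edge gives $\limsup_p \lambda_{\min}(\bF)\le \sigmaMin^2$ a.s.

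The substance of the proof is the matching ``outside'' bound. The key observation is that $\bF=\bS^{-1}\bE$ is similar to the symmetric matrix $\bS^{-1/2}\bE\,\bS^{-1/2}=\tfrac{1}{n}\bS^{-1/2}\Zin\Zin^{\T}\bS^{-1/2}$. Conditional on $\Zout$ (and hence on $\bS$), this is precisely a sample covariance matrix $\tfrac{1}{n}\bT_p^{1/2}\Zin\Zin^{\T}\bT_p^{1/2}$ with deterministic-given-$\Zout$ ``population covariance'' $\bT_p:=\bS^{-1}$ acting on i.i.d.\ Gaussian columns of $\Zin$. I would then check that on a probability-one event of realizations of $\Zout$, the sequence $\bT_p$ satisfies the hypotheses of \cite{bai1998no}:
\begin{enumerate}
    \item[(a)] The ESD of $\bS$ converges a.s.\ to the Mar\v{c}enko--Pastur law of shape $\beta$ (supported on $[(1-\sqrt{\beta})^2,(1+\sqrt{\beta})^2]$), so the ESD of $\bT_p=\bS^{-1}$ converges a.s.\ to the pushforward under $t\mapsto 1/t$, which is compactly supported since $\beta<1$.
    \item[(b)] By the Bai--Yin theorem applied to $\bS$, $\lambda_{\min}(\bS)\to(1-\sqrt{\beta})^2$ and $\lambda_{\max}(\bS)\to(1+\sqrt{\beta})^2$ a.s., so the extreme eigenvalues of $\bT_p$ converge a.s.\ to the edges of the support of its LESD.
    \item[(c)] Conditional on $\Zout$, $\Zin$ still has i.i.d.\ standard Gaussian entries, giving the required moment/independence hypotheses.
\end{enumerate}
Granting these, \cite{bai1998no} yields that for any closed interval $I$ strictly outside $\supp(F_{\gamma,\beta})=[\sigmaMin^2,\sigmaBulk^2]$ (excluding a neighborhood of $0$ in the $\gamma>1$ atom case), with conditional probability one no eigenvalue of $\bS^{-1/2}\bE\bS^{-1/2}$ lies in $I$ for all sufficiently large $p$. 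Integrating out $\Zout$ preserves the almost-sure statement and transfers to $\bF$ by similarity. This furnishes $\limsup_p \lambda_{\max}(\bF)\le\sigmaBulk^2$ and $\liminf_p \lambda_{\min}(\bF)\ge\sigmaMin^2$ a.s., which together with Step 1 proves the theorem.

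The main obstacle is the bookkeeping for (iii): the result in \cite{bai1998no} is typically formulated for a deterministic population covariance sequence, so one must either invoke a version that allows a random $\bT_p$ whose spectrum satisfies the regularity conditions almost surely, or carefully condition on $\Zout$ and verify that the hypotheses hold on a $\Prob$-full set and then apply Fubini. A secondary (minor) issue is handling the case $\gamma>1$, where $\bF$ carries a mandatory rank-deficiency atom at $0$; there the statement must be interpreted as a statement about the smallest and largest nonzero eigenvalues, and the above argument is applied to the $n\times n$ compression $\tfrac{1}{n}\Zin^{\T}\bS^{-1}\Zin$, whose nonzero spectrum coincides with that of $\bF$.
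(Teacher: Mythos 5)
Your proposal is correct and follows essentially the same route as the paper, which proves this theorem simply by invoking the ``no eigenvalues outside the support'' result of \cite[Theorem 1.1]{bai1998no} for the sample-covariance representation $\bS^{-1/2}\bE\,\bS^{-1/2}$ (together with the weak convergence to Wachter's law for the inside direction). Your write-up merely spells out the bookkeeping the paper leaves implicit — conditioning on $\Zout$, Bai--Yin control of $\lambda_{\min}(\bS)$ so that $\bS^{-1}$ is a.s.\ bounded with edge-convergent spectrum, and the interpretation via nonzero eigenvalues when $\gamma>1$ — all of which is consistent with the paper's intended argument.
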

In our proofs, we shall use a well-known formula for the Stieltjes transform of Wachter's law. The following appears, for example, in \cite[Theorem 4.10]{bai2009book}. 
\begin{proposition}
[Stieltjes transform of Wachter's law]
\label{prop:stieltjes}
The Stieltjes transform of Wachter's law is given by (\ref{eq:stieltjes-def}). Furthermore, we have the following convergence for the empirical Stieltjes transform:
\begin{align*}
    &p^{-1}\tr(\bN\bN^\T - z\Id)^{-1} \longrightarrow \stiel(z), \\
     &\frac{d}{dz} p^{-1}\tr(\bN\bN^\T - z\Id)^{-1} =  p^{-1}\tr(\bN\bN^\T - z\Id)^{-2} \longrightarrow \stiel'(z) \,,
\end{align*}
for all 
$z\in (\sigmaBulk^2,\infty)$.
Above, convergence is a.s. and uniform on compact subintervals. 

Furthermore,
\begin{align*}
    &n^{-1}\tr(\N^\T \N -z\Id)^{-1} = n^{-1}\left[\tr(\N \N^\T -z\Id)^{-1} + (n-p)\frac{1}{z}\right]  \longrightarrow \sbar(z), \\
    &\frac{d}{dz}n^{-1}\tr(\N^\T \N -z\Id)^{-1} = n^{-1}\tr(\N^\T\N-z\Id)^{-2} \longrightarrow \sbar'(z) \,.
\end{align*}
\end{proposition}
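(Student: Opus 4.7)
The plan is to combine results already stated in the paper (Theorems~\ref{thm:wachter-density} and \ref{thm:wachter-edge}) with a standard complex-analytic argument and a direct citation of the explicit formula from \cite[Theorem 4.10]{bai2009book}.

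Concretely, I would first establish pointwise almost-sure convergence $p^{-1}\tr(\bN\bN^\T - z\Id)^{-1} \to \stiel(z)$ for each fixed $z \in (\sigmaBulk^2,\infty)$. As discussed at the beginning of Section~\ref{sec:background-F}, the squared singular values of $\bN$ coincide with the eigenvalues of the F-matrix $\bF = \bS^{-1}\bE$, so Theorem~\ref{thm:wachter-density} yields weak a.s. convergence of the ESD $\mu_{\bN\bN^\T}$ to Wachter's law $F_{\gamma,\beta}$. The test function $\lambda \mapsto (\lambda-z)^{-1}$ is continuous but not compactly supported; to conclude convergence of the integrals I would use Theorem~\ref{thm:wachter-edge}, which guarantees that all eigenvalues eventually lie in $[\sigmaMin^2-\epsilon,\sigmaBulk^2+\epsilon]$ for any $\epsilon>0$, so the integrand may be truncated outside this interval at an asymptotically negligible cost. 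The explicit formula (\ref{eq:stieltjes-def}) for the resulting limiting integral is then the content of \cite[Theorem 4.10]{bai2009book}; it may alternatively be derived by direct contour integration of the density (\ref{eq:Wachter-density}) or by verifying the defining implicit equation for the Stieltjes transform of $\bS^{-1}\bE$ coming from the free multiplicative convolution of the Marchenko--Pastur law with the inverse Marchenko--Pastur law.

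To upgrade pointwise convergence to uniform convergence on compact subsets of $(\sigmaBulk^2,\infty)$, and to handle derivatives, I would invoke Vitali's convergence theorem (a normal-families argument). The functions $z \mapsto p^{-1}\tr(\bN\bN^\T - z\Id)^{-1}$ are analytic off $\mathrm{spec}(\bN\bN^\T)$; by Theorem~\ref{thm:wachter-edge}, for any compact $K\subset (\sigmaBulk^2,\infty)$ one may choose a fixed open neighborhood $U\supset K$ disjoint from $[\sigmaMin^2-\epsilon,\sigmaBulk^2+\epsilon]$ on which, almost surely for all sufficiently large $p$, these functions are analytic and uniformly bounded. Vitali's theorem then upgrades a.s. pointwise convergence on $K$ to uniform convergence, and Weierstrass' theorem on holomorphic limits gives convergence of derivatives $p^{-1}\tr(\bN\bN^\T-z\Id)^{-2} \to \stiel'(z)$ uniformly on compact subsets as well.

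Finally, the statements involving $\sbar(\cdot)$ reduce to the previous ones via the elementary identity
\[ n^{-1}\tr(\bN^\T\bN - z\Id)^{-1} = n^{-1}\tr(\bN\bN^\T - z\Id)^{-1} - \tfrac{n-p}{nz}, \]
which holds since $\bN^\T\bN$ has the same non-zero eigenvalues as $\bN\bN^\T$ plus $n-p$ extra zero eigenvalues. Taking limits with $p/n\to\gamma$ gives limiting value $\gamma\stiel(z) - (1-\gamma)/z$, and a direct algebraic check against (\ref{eq:stieltjes-def}) and (\ref{eq:stieltjes-assoc-def}) confirms this equals $\sbar(z)$; the corresponding derivative statement then follows by the same Vitali argument. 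There is no serious obstacle in this plan: the mildly delicate point is bootstrapping pointwise to uniform convergence (and to derivatives), but this is entirely standard given the edge control supplied by Theorem~\ref{thm:wachter-edge}.
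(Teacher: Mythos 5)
Your proposal is essentially correct, and it usefully supplies an argument that the paper itself does not: the paper merely cites \cite[Theorem~4.10]{bai2009book} for this proposition rather than proving it. Your route --- weak a.s.\ convergence of the ESD from Theorem~\ref{thm:wachter-density}, plus the edge control of Theorem~\ref{thm:wachter-edge} to pass to the unbounded test function $\lambda\mapsto(\lambda-z)^{-1}$, and then a Vitali/normal-families argument to upgrade to uniform convergence and convergence of derivatives --- is the standard, correct way to derive Stieltjes-transform convergence from spectral convergence with edge control, and nothing in it would fail.

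Two small points worth flagging. First, you have silently corrected what appears to be a sign error in the paper's statement: the displayed identity there reads
$n^{-1}\tr(\N^\T\N-z\Id)^{-1}=n^{-1}\bigl[\tr(\N\N^\T-z\Id)^{-1}+(n-p)\tfrac1z\bigr]$,
whereas the correct identity is your version with $-\,(n-p)/(nz)$: the $|n-p|$ extra zero eigenvalues each contribute $1/(0-z)=-1/z$ to the trace. Your sign is also the one consistent with the relation $\sbar(z)=\gamma\,\stiel(z)-(1-\gamma)/z$ that the paper itself uses (and relies on) in Appendix~C. Second, a minor technicality in the Vitali step: a.s.\ convergence at each fixed $z$ does not by itself give, on a single full-measure event, pointwise convergence for all $z$ in a compact. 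The standard fix, worth stating, is to first intersect the a.s.\ events over a countable dense subset of $(\sigmaBulk^2,\infty)$; Vitali then upgrades this to uniform (and derivative) convergence on compacts. With that sentence added, the argument is complete.
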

We also need a limiting expression for the following resolvent-like expression from \cite[Lemma 1]{dharmawansa2014local}: 
\begin{proposition}\label{prop:res}
Let $\res(z)$ be as in (\ref{eq:res-def}). Then 
\begin{align*}
    &p^{-1}\trace \left(\bE-z\bS\right)^{-1} \longrightarrow \res(z),\\
    &\frac{d}{dz} p^{-1}\trace \left(\bE-z\bS\right)^{-1} = p^{-1}\trace \bS\left(\bE-z\bS\right)^{-2} \longrightarrow \res'(z) \,,
\end{align*}
for all $z\in (\sigmaBulk^2,\infty)$. 
Above, convergence is a.s. and uniform on compact subintervals.
\end{proposition}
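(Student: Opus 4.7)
The plan is to compute $\lim_{p\to\infty} p^{-1}\trace(\bE-z\bS)^{-1}$ via free probability, leveraging that $\bE$ and $\bS$ are independent, orthogonally invariant Wishart matrices. Since $\Zin,\Zout$ have i.i.d.\ Gaussian entries, Voiculescu's asymptotic freeness theorem implies that, a.s., $(\bE,\bS)$ converges in $\ast$-moments to a pair of freely independent elements with Marchenko--Pastur marginals $\mathrm{MP}(\gamma)$ and $\mathrm{MP}(\beta)$. Consequently, $\bM := \bE - z\bS$ has a.s.\ limiting spectral distribution $\nu_z := \mathrm{MP}(\gamma) \boxplus (-z)\cdot\mathrm{MP}(\beta)$, and $p^{-1}\trace(\bE - z\bS)^{-1} \to -G_{\nu_z}(0)$, where $G_{\nu_z}$ is the Cauchy transform.

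From $R_{\mathrm{MP}(c)}(w) = 1/(1-cw)$ and the scaling rule $R_{cX}(w)=cR_X(cw)$, additivity of the $R$-transform under freeness gives
\[
R_{\nu_z}(w) \;=\; \frac{1}{1-\gamma w} \,-\, \frac{z}{1+\beta z w}.
\]
Setting $u = G_{\nu_z}(0)$ and using the relation $R_{\nu_z}(u)+1/u=0$, clearing denominators yields the quadratic
\[
z(\gamma+\beta-\gamma\beta)\,u^2 \,+\, \bigl((1-\gamma)-z(1-\beta)\bigr)\,u \,+\, 1 \;=\; 0,
\]
whose discriminant simplifies via the identity $((1-\gamma)-z(1-\beta))^2 - 4z(\gamma+\beta-\gamma\beta) = ((1-\beta)z+(1-\gamma))^2 - 4z$. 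One selects the branch with $u \sim 1/((1-\beta)z)$ as $z\to\infty$ (matching $G_{\nu_z}(0)>0$ for large $z$, since then all eigenvalues of $\bE-z\bS$ are negative so that $p^{-1}\trace(\bE-z\bS)^{-1}\sim -z^{-1}p^{-1}\trace\bS^{-1}\to -1/((1-\beta)z)$), and a direct comparison shows $-u$ equals the formula \eqref{eq:res-def}. This gives the claimed pointwise a.s.\ convergence $p^{-1}\trace(\bE-z\bS)^{-1}\to \res(z)$.

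Uniform convergence on compact $K\subset(\sigmaBulk^2,\infty)$ follows from analyticity: by Theorem \ref{thm:wachter-edge} together with $\lambda_{\min}(\bS)\to(1-\sqrt{\beta})^2 > 0$ a.s., the operator norm $\|(\bE-z\bS)^{-1}\|_{\op}$ is uniformly bounded over $z\in K$ a.s.\ for all large $p$. Extending $z$ to a complex neighborhood of $K$ yields a family of uniformly bounded analytic functions, whence Vitali's theorem upgrades the pointwise a.s.\ convergence to uniform a.s.\ convergence, and Weierstrass' theorem for analytic functions then yields $\frac{d}{dz}p^{-1}\trace(\bE-z\bS)^{-1} = p^{-1}\trace\bigl(\bS(\bE-z\bS)^{-2}\bigr)\to\res'(z)$. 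The main technical obstacle is upgrading freeness-based convergence of \emph{polynomial} moments to a.s.\ convergence of \emph{resolvent} traces: for polynomial moments, Gaussian concentration (e.g.\ Poincar\'e inequality) supplies a.s.\ convergence with variance decaying as $O(p^{-2})$, and resolvents are handled by polynomial approximation on the a.s.\ bounded spectrum guaranteed by Theorem \ref{thm:wachter-edge}.
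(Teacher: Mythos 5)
Your proof is correct, but note that the paper itself does not prove Proposition~\ref{prop:res}: it cites it directly from \cite[Lemma 1]{dharmawansa2014local}, so there is no in-paper argument to compare against. What you have done is supply a self-contained derivation via free probability, which is a legitimately different route from the one taken in the cited reference (there, as is usual in the statistics literature, the formula for $\res$ arises from a deterministic-equivalent / self-consistent Stieltjes-transform equation, not from $R$-transform additivity). Your chain of reasoning is sound: asymptotic freeness of the two independent orthogonally-invariant Wisharts, the $R$-transform identity $R_{\nu_z}(u)+1/u=0$ at $\zeta=0$, the resulting quadratic $z(\gamma+\beta-\gamma\beta)u^2+((1-\gamma)-z(1-\beta))u+1=0$, the discriminant simplification (which rests on $(1-\gamma)(1-\beta)+(\gamma+\beta-\gamma\beta)=1$), and the branch selection via the $z\to\infty$ asymptotics $u\sim 1/((1-\beta)z)$ — all of these check out and indeed yield $\res(z)=-u$. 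The upgrade to almost-sure, locally-uniform convergence via the a.s.\ bound on $\|(\bE-z\bS)^{-1}\|_{\op}$ (from Theorem~\ref{thm:wachter-edge} and $\lambda_{\min}(\bS)\to(1-\sqrt\beta)^2$), Vitali's theorem, and Weierstrass convergence of derivatives is also standard and correct. The main thing the cited approach buys over yours is that it delivers the a.s.\ pointwise convergence of the resolvent trace directly, without needing the extra step you flag (passing from moment convergence to resolvent convergence by concentration plus polynomial approximation); conversely, your $R$-transform argument makes transparent \emph{why} a quadratic arises and generalizes cleanly to other orthogonally-invariant ensembles.
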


\paragraph{Notation.} For (possibly random) sequences $a_p,b_p\in \RR$, we write $a_p\asympeq b_p$ to indicate $a_p-b_p\to 0$ a.s. 
For a sequence of matrices $\bM_p \in \RR^{p\times p}$, we denote $\trlim(\bM_p):=\lim_{p\to\infty}p^{-1}\tr(\bM_p)$, where it is understood that the limit exists a.s. 

\section{Proofs of main results}
\label{sec:proofs}

\subsection{Proof of Theorem~\ref{thm:detectability-pt}}
\label{sec:proof_detectability}

\begin{proof}[\unskip\nopunct]

We study the leading eigenvalues of the pseudo-whitened data matrix (\ref{eq:Yw-def}),
\begin{equation}
    \Y^w = \bP + \bN\,,
\end{equation}
where $\bP = \hbSigma^{-1/2}\bU\bLambda\bV $ is the pseudo-whitened signal matrix, and $\bN$ is the pseudo-whitened noise matrix (\ref{eq:N-def}). 
Since $\rank(\bP)=r$, by Weyl's interlacing inequalities, for every $i$, 
\begin{align*}
    \sigma_{i+r+1}(\bY^w) \le \sigma_{i+1}(\bN) + \sigma_{r+1}(\bP) = \sigma_{i+1}(\bN)\,, \quad \sigma_{(r+i)+r+1}(\bN) \le \sigma_{r+i+1}(\bY^w) + \sigma_{r+1}(-\bP) = \sigma_{r+i+1}(\bY^w) \,.
\end{align*}
Thus, for any $i$, $\yval_{i+r+1}:=\sigma_{i+r+1}(\bY)$ satisfies $\sigma_{2r+i+1}(\bN) \le \sigma_{i+r+1}(\bY) \le \sigma_{i+1}(\bN)$. By Theorems~\ref{thm:wachter-density} and \ref{thm:wachter-edge}, $\sigma_{2r+i+1}(\bN),\sigma_{i+1}(\bN)\to \sigmaBulk$ a.s. Consequently, for fixed $k\ge r+1$, $\yval_k \to \sigmaBulk$.  

It remains to check whether $\bY^w$ has singular values which are asymptotically larger than $\sigmaBulk$. Note that, appealing to Theorem~\ref{thm:wachter-edge}, such singular values necessarily cannot be singular values of the noise matrix $\bN$. \Revision{By an argument essentially identical to \cite[Lemma 4.1]{benaych2012singular},} the singular values of $\bY^w$ which are not singular values of $\bN$ are exactly (with multiplicities) the solutions to $\det(\widehat{\bM}(y))=0$, where $\widehat{\bM}(y)$ is the $2r$-by-$2r$ symmetric matrix
\begin{align}
    \label{eq:M_n(z)}
    \widehat{\bM}(y) = \begin{bmatrix}
    y \cdot \bU^\T\widehat{\bSigma}^{-1/2} \left( y^2 \bI_p - \bN \bN^\T \right)^{-1} \widehat{\bSigma}^{-1/2}\bU &\quad \bU^\T\widehat{\bSigma}^{-1/2}\left(y^2 \bI_p-\bN\bN^\T\right)^{-1} \bN \bV \\
    \bV^\T \bN^\T \left(y^2 \bI_p-\bN\bN^\T\right)^{-1} \widehat{\bSigma}^{-1/2}\bU &\quad
    y \cdot \bV^\T \left( y^2 \bI_n -\bN^\T \bN \right)^{-1} \bV
    \end{bmatrix} -
    \begin{bmatrix}
    0 &\quad \bLambda^{-1}\\
    \bLambda^{-1} &\quad 0
    \end{bmatrix} \,.
\end{align}

Define the matrix $\bTau=\diag(\tau_1,\ldots,\tau_r)\in \R^{r\times r}$. 
The next lemma asserts that the random matrix $\widehat{\bM}(y)$ converges a.s. to a deterministic limit as $p\to\infty$:
\begin{lemma}\label{lem:M-conv}
    Suppose that $y\notin \CC\setminus[\sigmaMin,\sigmaBulk]$. Then a.s., $\widehat{\bM}(y)\longrightarrow \bM(y)$, where
    \begin{align}
    \label{eq:M(z)-lim}
    \bM(y) = \begin{bmatrix}
    -y \res(y^2)\cdot \bTau  &\quad \mathbf{0} \\
    \mathbf{0} &\quad -y\sbar(y^2)\cdot \bI 
    \end{bmatrix} -
    \begin{bmatrix}
    0 &\quad \bLambda^{-1}\\
    \bLambda^{-1} &\quad 0
    \end{bmatrix} \,.
    \end{align}
    Moreover, convergence (of each entry) is uniform on compact subsets in $y\notin \CC\setminus[\sigmaMin,\sigmaBulk]$.
\end{lemma}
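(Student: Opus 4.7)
The strategy is to compute the almost-sure limit of each of the four $r\times r$ blocks of $\widehat{\bM}(y)$ separately, exploiting (i) the independence of the Gaussian signal factors $\bw_i$ (which enter through $\bu_i = p^{-1/2}\bD_i\bw_i$) and of $\bV$ from the noise matrices $\Zin,\Zout$, (ii) the free-probability assumption on the algebra $\{\bD_i\bD_i^\T,\bSigma,\bSigma^{-1}\}$, and (iii) Propositions~\ref{prop:stieltjes} and \ref{prop:res}, which give deterministic limits for traces of resolvents of $\bN\bN^\T$, $\bN^\T\bN$, and $\bE - z\bS$.

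For the $(1,1)$ block, a direct computation using $\hbSigma = \bSigma^{1/2}\bS\bSigma^{1/2}$ yields the deterministic identity $\hbSigma^{-1/2}(y^2\bI_p - \bN\bN^\T)^{-1}\hbSigma^{-1/2} = \bSigma^{-1/2}(y^2\bS - \bE)^{-1}\bSigma^{-1/2}$, so that the $(i,j)$ entry equals $y\cdot p^{-1}\bw_i^\T \bD_i^\T\bSigma^{-1/2}(y^2\bS-\bE)^{-1}\bSigma^{-1/2}\bD_j\bw_j$. Conditioning on $(\bE,\bS)$ and applying Hanson--Wright concentration to the Gaussian quadratic form in $(\bw_i,\bw_j)$ gives the almost-sure equivalent $\delta_{ij}\cdot y\cdot p^{-1}\tr\bigl(\bSigma^{-1/2}\bD_i\bD_i^\T\bSigma^{-1/2}(y^2\bS-\bE)^{-1}\bigr)$. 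Since $\bE,\bS$ are Gaussian Wishart matrices independent of the deterministic algebra generated by $\{\bD_i\bD_i^\T,\bSigma,\bSigma^{-1}\}$, they are asymptotically free from it; the trace therefore factorizes in the limit as $\tau_i\cdot \trlim((y^2\bS-\bE)^{-1}) = -\tau_i\cdot \res(y^2)$ by Proposition~\ref{prop:res}, matching the $-y\res(y^2)\bTau$ block of $\bM(y)$.

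For the $(2,2)$ block I would observe that $\bN^\T\bN = \Zin^\T\bS^{-1}\Zin/n$ and use that, conditional on $\bS$, the law of $(y^2\bI_n-\bN^\T\bN)^{-1}$ is invariant under right-orthogonal conjugation (because $\Zin$ is bi-orthogonally invariant and independent of $\bV,\bS$). Combined with $\bV^\T\bV \to \bI_r$, standard concentration of bilinear forms against orthogonally-invariant resolvents gives $\bv_i^\T(y^2\bI_n-\bN^\T\bN)^{-1}\bv_j \asympeq \delta_{ij}\cdot n^{-1}\tr(y^2\bI_n-\bN^\T\bN)^{-1} \to -\delta_{ij}\sbar(y^2)$ by Proposition~\ref{prop:stieltjes}, yielding the $-y\sbar(y^2)\bI_r$ block. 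For the $(1,2)$ block I would use the SVD identity $(y^2\bI_p-\bN\bN^\T)^{-1}\bN = \bN(y^2\bI_n-\bN^\T\bN)^{-1}$ and $\hbSigma^{-1/2}\bN = \bSigma^{-1/2}\bS^{-1}\Zin/\sqrt{n}$ to rewrite the $(i,j)$ entry as $p^{-1/2}\bw_i^\T \bx$, where $\bx = \bD_i^\T\bSigma^{-1/2}\bS^{-1}\Zin(y^2\bI_n-\bN^\T\bN)^{-1}\bv_j/\sqrt{n}$. Conditional on $\bx$ this is $\mathcal{N}(0,\|\bx\|^2/p)$, and submultiplicativity together with $\|\bD_i\|_\op,\|\bSigma^{-1/2}\|_\op,\|\bS^{-1}\|_\op,\|(y^2\bI_n-\bN^\T\bN)^{-1}\|_\op = O(1)$ a.s.\ (using Theorem~\ref{thm:wachter-edge} and the hypotheses) and $\|\Zin\|_\op^2 = O(n)$ gives $\|\bx\|^2 = O(1)$, hence a vanishing Gaussian fluctuation. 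Subtracting $\bLambda^{-1}$ produces the $-\bLambda^{-1}$ off-diagonal block of $\bM(y)$; block $(2,1)$ is handled by the same argument applied to the transpose.

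For the uniformity claim, each entry of $\widehat{\bM}(y)$ is an analytic function of $y$ on $\CC\setminus[\sigmaMin,\sigmaBulk]$ with operator-norm bounds on the resolvents that are uniform on compact subsets of this domain; a standard Vitali / normal-families argument then upgrades the pointwise a.s.\ convergence of each entry to locally uniform a.s.\ convergence. The main obstacle I anticipate is the rigorous justification of the trace-factorization in block $(1,1)$: one must invoke the asymptotic freeness of the Gaussian Wishart matrices $\bE,\bS$ from the deterministic sequence $\bSigma^{-1/2}\bD_i\bD_i^\T\bSigma^{-1/2}$, which relies essentially on the paper's free-probability hypothesis on the algebra $\{\bD_i\bD_i^\T,\bSigma,\bSigma^{-1}\}$ together with Voiculescu's classical asymptotic freeness of Gaussian Wishart matrices from any such bounded deterministic family with a joint limiting law; one must also check that $(y^2\bS-\bE)^{-1}$ lies in (or is a limit of polynomials in) the algebra generated by $\bE,\bS$, which is immediate from a resolvent / power-series expansion valid for $y$ outside the support.
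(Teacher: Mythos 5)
Your proof is correct and follows essentially the same route as the paper's: Hanson--Wright concentration plus the asymptotic freeness of the orthogonally invariant matrix $(y^2\bS-\bE)^{-1}$ from $\bSigma^{-1/2}\bD_k\bD_k^\T\bSigma^{-1/2}$ for the $(1,1)$ block, concentration of bilinear forms against the orthogonally-invariant resolvent plus Proposition~\ref{prop:stieltjes} for the $(2,2)$ block, and vanishing of the off-diagonal blocks by independence. The only cosmetic differences are that for the off-diagonal block the paper exploits the right-orthogonal invariance of $\bN$ directly while you expose the Gaussian factor $\bw_i$ and condition on the rest, and the paper cites Arzel\`a--Ascoli where you invoke Vitali/Montel for the locally uniform upgrade; both pairs of arguments are equivalent here.
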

\begin{proof}
We start with the off-diagonal elements of (\ref{eq:M_n(z)}). First, note that the matrix $\bN$ is invariant to multiplication by $O(n)$ from the right. Consequently, if $\mathcal{O}\sim \mathrm{Haar}(O(n))$, then $\bU^\T\widehat{\bSigma}^{-1/2}\left(y^2 \bI_p-\bN\bN^\T\right)^{-1} \bN \bV \overset{d}{=} \bU^\T\widehat{\bSigma}^{-1/2}\left(y^2 \bI_p-\bN\bN^\T\right)^{-1} \bN \mathcal{O}\bV \asympeq \bm{0} \in \R^{r \times r}$. Now, considering the top left block and using (\ref{eq:ui-def}), 
\begin{align*}
    \left( \bU^\T\widehat{\bSigma}^{-1/2} \left( y^2 \bI_p - \bN \bN^\T \right)^{-1} \widehat{\bSigma}^{-1/2}\bU \right)_{j,k} 
    &= p^{-1}\w_j^\T \bD_j^\T \widehat{\bSigma}^{-1/2} \left( y^2 \bI_p - \bN \bN^\T \right)^{-1} \widehat{\bSigma}^{-1/2} \bD_k \w_k \\ 
    &\overset{(\star)}{\asympeq} p^{-1}\tr\left( \bD_k^\T \widehat{\bSigma}^{-1/2} \left( y^2 \bI_p - \bN \bN^\T \right)^{-1} \widehat{\bSigma}^{-1/2} \bD_k  \right) \Ind\{j=k\}\,,
\end{align*}
where $(\star)$ follows by the independence of $\w_j,\w_k$ and, e.g., the Hanson-Wright inequality. The term under the trace is  $\widehat{\bSigma}^{-1/2} \left( y^2 \bI_p - \bN \bN^\T \right)^{-1} \widehat{\bSigma}^{-1/2} = \left( y^2\hbSigma - \hbSigma^{1/2}\bN\bN^\T \hbSigma^{1/2} \right)^{-1}=\bSigma^{-1/2}\left( y^2\bS-\bE\right)^{-1}\bSigma^{-1/2}$. Crucially, the matrix $(y^2\bS-\bE)^{-1}$ is orthogonally invariant, and a.s. has bounded operator norm, since $y>\sigmaThresh$. Consequently, $(y^2\bS-\bE)^{-1}$ and $\bSigma^{-1/2}\bD_k\bD_k^\T \bSigma^{-1/2}$ are asymptotically free (see Section~\ref{sec:free-probability}), and so 
\begin{align}
    \trlim\left( \bD_k^\T \widehat{\bSigma}^{-1/2} \left( y^2 \bI_p - \bN \bN^\T \right)^{-1} \widehat{\bSigma}^{-1/2} \bD_k \right) 
    &= \trlim\left( (y^2\bS-\bE)^{-1} \bSigma^{-1/2}\bD_k\bD_k^\T \bSigma^{-1/2} \right) \nonumber\\
    &= \trlim \left((y^2\bS-\bE)^{-1} \right) \trlim\left( \bSigma^{-1/2}\bD_k\bD_k^\T \bSigma^{-1/2} \right) \nonumber\\
    &= -\res(y^2)\cdot \tau_k \,, \label{eq:aux0}
\end{align}
where we used Proposition~\ref{prop:res} and (\ref{eq:tau-def}). This establishes pointwise convergence; uniform convergence follows from the Arzela-Ascoli theorem, where both equicontinuity and uniform boundedness of the entries clearly hold, since $y$ is bounded away from the support of Wachter's distribution, $[\sigmaMin,\sigmaBulk]$. Finally, the bottom right block of (\ref{eq:M_n(z)}) may be analyzed similarly, using Proposition~\ref{prop:stieltjes}.
\end{proof}

\Revision{Since the operator norm of $\Y^w$ is a.s. bounded by a constant, hence all its singular values lie a.s. within a fixed compact interval, 
one can deduce using elementary complex analysis (see e.g. \cite[Lemma 6.1]{benaych2011eigenvalues} for a formal argument), that for each $1\le k\le r$, either:} 1) $\sigma_k(\Y^w)\to \sigmaBulk$ a.s.; or 2) $\sigma_k(\Y^w)$ tends a.s. to a root of the deterministic polynomial equation $\det(\bM(y))=0$. Moreover, the roots of this equation match exactly (including their multiplicities) the limiting locations and counts of the outlying singular values of $\Y^w$.  
Now, one may verify that $\det(\bM(y))=0$ if and only if for some $1\le k \le r$, 
\begin{equation}\label{eq:outlier-eq}
    \psi(y^2):= y^2\cdot \res(y^2) \cdot \sbar(y^2) = 1/(\tau_k \sigma_k^2) \,,
\end{equation}
where an explicit formula for $\psi(z)$ is given in (\ref{eq:psi}). One can show that $\psi(\cdot)$ is strictly decreasing, and maps $(\sigmaBulk^2,\infty)$ bijectively to $(1/\sigmaThresh^2,0)$. Consequently, a solution to (\ref{eq:outlier-eq}) exists if and only if $\sqrt{\tau_k}\sigma_k > \sigmaThresh$, in other words, $k\le r^*$. In this case, one may also compute explicitly the functional inverse, so that (\ref{eq:outlier-eq}) is equivalent to $y = \Xi(\sqrt{\tau_k}\sigma_k)$, where the spike-forward map $\Xi(\cdot)$ is given in (\ref{eq:spike-forward-in-thm}). 
This concludes the proof of Theorem~\ref{thm:detectability-pt}.
\end{proof}

\subsection{Proof of Theorem~\ref{thm:products}}

\label{sect:proof-precosines}

\begin{proof}[\unskip\nopunct]

The first step of the proof consists of computing the limiting inner products $\langle \bu_i,\hbSigma^{-1/2}\hbu^w_k\rangle $ and $\langle \bv_i,\hbv_k\rangle$. This is an important intermediate step towards calculating $\langle \bu_i,\hbSigma^{1/2}\hbu^w_k\rangle $ and $\|\hbSigma^{1/2}\hbu^w_k\|$.
To this end,
define the vectors $\balpha_k,\bbeta_k\in \RR^r$, $1\le k \le r$,
\begin{equation*}
    \balpha_k = \bU^\T \hbSigma^{-1/2}\bu_k^w\,,\quad \bbeta_k = \bV^\T \hbv_k \,.
\end{equation*}

\begin{lemma}
\label{lem:vecs-off-diagonal}

Let $1\le k \le r$ be such that $\sqrt{\tau_k} \sigma_k > \sigmaThresh$. For every $i\in [r]\setminus\{k\}$, 
\begin{align}
        \lim_{p\to\infty} (\balpha_k)_i = 0,\quad \lim_{p\to\infty} (\bbeta_k)_i = 0 \,.
\end{align}
\end{lemma}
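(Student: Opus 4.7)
The plan is to derive a closed linear system for $(\balpha_k,\bbeta_k)$ from the singular vector relations, and show that its asymptotic form is incompatible with non-zero off-diagonal entries. Writing $\bY^w = \bN + \bP$ with $\bP = \hbSigma^{-1/2}\bU\bLambda\bV^\T$, I would substitute $\hbv_k^w = \yval_k^{-1}(\bY^w)^\T\hbu_k^w$ into $\hbu_k^w = \yval_k^{-1}\bY^w\hbv_k^w$, invert the a.s.\ invertible resolvents $(\yval_k^2\bI - \bN\bN^\T)^{-1}$ and $(\yval_k^2\bI - \bN^\T\bN)^{-1}$ (non-singular since $\yval_k \to y_k > \sigmaBulk$), and project onto $\bU^\T\hbSigma^{-1/2}$ and $\bV^\T$ respectively. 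This yields the system
\begin{align*}
\balpha_k &= A_{12}(\yval_k)\bLambda\balpha_k + A_{11}(\yval_k)\bLambda\bbeta_k,\\
\bbeta_k &= A_{22}(\yval_k)\bLambda\balpha_k + A_{21}(\yval_k)\bLambda\bbeta_k,
\end{align*}
where the $A_{ij}(y)$ are precisely the $r\times r$ blocks appearing in $\widehat{\bM}(y)$ from \eqref{eq:M_n(z)}.

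Next, combining Theorem~\ref{thm:detectability-pt} (which gives $\yval_k\to y_k$ a.s.) with the uniform-on-compacts convergence in Lemma~\ref{lem:M-conv}, I obtain a.s.\ $A_{11}(\yval_k)\to -y_k\res(y_k^2)\bTau$, $A_{22}(\yval_k)\to -y_k\sbar(y_k^2)\bI$, and $A_{12}(\yval_k),A_{21}(\yval_k)\to 0$. The sequences $\balpha_k,\bbeta_k$ are bounded a.s., since $\|\hbSigma^{-1/2}\|_{\op}$ is bounded (as $\lambda_{\min}(\bS)$ stays bounded away from zero when $\beta<1$) and $\bV^\T\bV\to\bI$. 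Along any convergent subsequence, the limits $(\balpha_k^*,\bbeta_k^*)$ decouple into scalar pairs $(\balpha_k^*)_i = -y_k\res(y_k^2)\tau_i\sigma_i(\bbeta_k^*)_i$ and $(\bbeta_k^*)_i = -y_k\sbar(y_k^2)\sigma_i(\balpha_k^*)_i$; combining them gives
\begin{equation*}
\big(1 - \psi(y_k^2)\,\tau_i\sigma_i^2\big)(\balpha_k^*)_i = 0.
\end{equation*}
Since $y_k = \Xi(\sqrt{\tau_k}\sigma_k)$ satisfies $\psi(y_k^2) = 1/(\tau_k\sigma_k^2)$ and the simple-spectrum assumption ensures $\tau_i\sigma_i^2\neq\tau_k\sigma_k^2$ for $i\neq k$, this coefficient is non-zero, forcing $(\balpha_k^*)_i=0$, and then $(\bbeta_k^*)_i=0$ via the second scalar equation. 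The sub-subsequence principle applied to the bounded sequence $(\balpha_k,\bbeta_k)$ completes the argument.

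The underlying picture is the standard outlier-eigenvector calculation: the vector $(\bLambda\bbeta_k,\bLambda\balpha_k)$ asymptotically lies in $\ker\bM(y_k)$, which by the simple-spectrum hypothesis is one-dimensional and supported only on coordinates $\{k,r+k\}$. The main technical subtlety is passing the convergences of Lemma~\ref{lem:M-conv} from deterministic $y$ to the random outlier $\yval_k$; this is handled by the uniform convergence on compact neighborhoods of $y_k$ together with the strict separation $y_k > \sigmaBulk$ implied by $\sqrt{\tau_k}\sigma_k > \sigmaThresh$, both already in hand.
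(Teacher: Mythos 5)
Your proof is correct and follows essentially the same route as the paper: the linear system you derive for $(\balpha_k,\bbeta_k)$ from the singular-vector relations is exactly the statement that $(\bLambda\bbeta_k,\bLambda\balpha_k)\in\ker\widehat{\bM}(\yval_k)$, which the paper obtains by citing \cite[Lemma 5.1]{benaych2012singular}, and from there both arguments pass to the limit via Lemma~\ref{lem:M-conv} and the convergence $\yval_k\to y_k$, then use the simple-spectrum hypothesis to show the resulting $2\times 2$ coefficient matrix (equivalently, the scalar factor $1-\psi(y_k^2)\tau_i\sigma_i^2$) is nonzero. The only cosmetic differences are that you spell out the derivation of the kernel relation and add the sub-subsequence step explicitly.
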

\begin{proof}
\Revision{By an argument identical to \cite[Lemma 5.1]{benaych2012singular}, one can show that the vector $\bm{x}_k=(\bLambda\bbeta_k,\bLambda \balpha_k)\in \R^{2r}$ lies in the kernel of $\widehat{\bM}(\yval_k)$, where $\widehat{\bM}(\cdot)$ is defined in (\ref{eq:M_n(z)}). (We refer to \cite{benaych2012singular} for the details.)}  Since $\bm{x}_k$ has bounded norm, and $\lim_{p\to\infty}\|\widehat{\bM}(\yval_k)-\bM(\y_k)\| = 0$ (by Lemma~\ref{lem:M-conv}), we have $\lim_{p\to\infty}\bM(y_k)\bm{x}_k=\bm{0}$ a.s. Considering only coordinates $i$ and $i+r$, where $i\ne k$, yields the equation
\begin{align}
\label{eq:aux4400}
\bm{0} \asympeq
\begin{bmatrix}
y_k \res(y_k^2) \tau_i & 1/\sigma_i \\
1/\sigma_i              & y_k \sbar(y_k^2)
\end{bmatrix}
\begin{bmatrix}
(\bm{x}_k)_i \\
(\bm{x}_k)_{i+r}
\end{bmatrix} \,.
\end{align}
Observe that the matrix in (\ref{eq:aux4400}) is invertible: its determinant is $\tau_i y_k^2\res(y_k^2)\sbar(y_k^2)-1/\sigma_i^2=\tau_i\psi(y_k^2)-1/\sigma_i^2 = \tau_i/(\tau_k \sigma_k^2)-1/\sigma_i^2$, where we used (\ref{eq:outlier-eq}); now recall that by assumption, $\tau_k\sigma_k^2 \ne \tau_i\sigma_i^2$, so the determinant is not zero. Thus, $(\bm{x}_k)_i,(\bm{x}_k)_{i+r}\asympeq 0$, and the claim follows.  
\end{proof}

By the definition of $\hbu_k^w,\hbv_k$ as singular vectors of $\Y^w$, we have $\bY^w \hbv_k = \yval_k\hbu_k^w$ and ${\bY^w}^\T \hbu_k^w = \yval_k\hbv_k$. Decomposing $\Y^w=\hbSigma^{-1/2}\bU\bLambda\bV^\T + \bN$ yields $\yval_k \hbu_k = (\hbSigma^{-1/2}\bU\bLambda\bV^\T + \bN)\hbv_k = \hbSigma^{-1/2}\bU\bLambda\bbeta_k + \bN \hbv_k$.
Rearranging, $\bN\hbv_k = \yval_k\hbu_k^w - \hbSigma^{-1/2}\bU\bLambda\bbeta_k $. Similarly, $\bN^\T \hbu_k^w = \yval_k\hbv_k - \bV\bLambda\balpha_k$. Now,
\begin{align*}
    \yval_k^2\hbv_k 
    &= {\Y^w}^\T \Y^w \hbv_k 
    = (\hbSigma^{-1/2}\bU\bLambda\bV^\T+\bN)^\T(\hbSigma^{-1/2}\bU\bLambda\bV^\T+\bN) \hbv_k\\
    &= (\hbSigma^{-1/2}\bU\bLambda\bV^\T+\bN)^\T (\hbSigma^{-1/2}\bU\bLambda\bbeta_k+\bN\bv_k) \\
    &= \bV\bLambda\bU^\T \hbSigma^{-1}\bU\bLambda \bbeta_k + \bN^\T \hbSigma^{-1/2} \bU\bLambda\bbeta_k + \bV\bLambda\bU^\T \hbSigma^{-1/2}(\bN\hbv_k) + \bN^\T \bN\hbv_k \\
    &= \bV\bLambda\bU^\T \hbSigma^{-1}\bU\bLambda \bbeta_k + \bN^\T \hbSigma^{-1/2} \bU\bLambda\bbeta_k + \bV\bLambda\bU^\T \hbSigma^{-1/2}(\yval_k\hbu_k^w - \hbSigma^{-1/2}\bU\bLambda\bbeta_k) + \bN^\T \bN\hbv_k \\
    &= \bN^\T \hbSigma^{-1/2} \bU\bLambda\bbeta_k + \yval_k \bV\bLambda\balpha_k + \bN^\T \bN\hbv_k \,.
\end{align*}
When $\yval_k$ is an outlier, namely when, $\sqrt{\tau_k}\sigma_k
>\sigmaThresh$, we have $\yval_k\asympeq y_k := \Xi(\sqrt{\tau_k}\sigma_k)$ and moreover that $\yval_k^2\Id - \bN^\T \bN$ is invertible. Thus, $\hbv_k = (\yval_k^2\Id - \bN^\T \bN)^{-1} \left(\bN^\T \hbSigma^{-1/2} \bU\bLambda\bbeta_k + \yval_k \bV\bLambda\balpha_k \right) $. Finally, recall that by Lemma~\ref{lem:vecs-off-diagonal}, $\balpha_k\asympeq (\balpha_k)_k \bm{e}_k$, $\bbeta_k\asympeq (\bbeta_k)_k \bm{e}_k$, where $\bm{e}_k$ is the $k$-th standard basis element. Note that one could repeat the same calculation above, starting with $\yval_k^2\hbu_k^w = \Y^w {\Y^w}^\T \hbu_k$. We deduce the following    formulas for the outlying singular vectors:
\begin{align}\label{eq:outlier-vector-rep}
\begin{split}
        \hbv_k &\asympeq \sigma_k \cdot  (\bbeta_k)_k \cdot (y_k^2\Id-\N^\T \N)^{-1}\bN^\T \hbSigma^{-1/2} \bu_k + 
    \sigma_k y_k \cdot (\balpha_k)_k\cdot (y_k^2\Id-\N^\T \N)^{-1}
    \bv_k \,, \\
    \hbu_k^w 
    &\asympeq \sigma_k\cdot (\balpha_k)_k\cdot \left(y_k^2\bI - \bN\bN^\T \right)^{-1} \bN \bv_k +
    \sigma_k y_k\cdot (\bbeta_k)_k\cdot  \left(y_k^2\bI - \bN\bN^\T \right)^{-1} \hbSigma^{-1/2}\bu_k \,.
\end{split}
\end{align}

\begin{lemma}\label{lem:pre-cosines-post}
    Suppose that $\sqrt{\tau_k}\sigma_k > \sigmaThresh$, and set $y_k=\Xi(\sqrt{\tau_k}\sigma_k)$. 
    Let
    \begin{align}
        \underline{c}_{k}^2 =
\sbar(y_k^2)\cdot \frac{\psi(y_k^2)}{\psi'(y_k^2)},\quad
{\tilde{c}_k}^2 = \tau_k \cdot \res(y_k^2)\cdot \frac{\psi(y_k^2)}{\psi'(y_k^2)}
\,,
    \end{align}
    where $\underline{c}_k$ also appears in Theorem~\ref{thm:products}. Then a.s.,
    \begin{equation}
        \lim_{p\to\infty}(\balpha_k)_k^2 = \tilde{c}_k^2, \quad
        \lim_{p\to\infty}(\bbeta_k)_k^2 = \underline{c}_k^2, \quad
        \lim_{p\to \infty}  (\balpha_k)_k(\bbeta_k)_k = \tilde{c}_k \underline{c}_k \,.
    \end{equation}
    
\end{lemma}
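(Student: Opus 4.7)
The plan is as follows. The representations (\ref{eq:outlier-vector-rep}) express $\hbu_k^w$ and $\hbv_k$ as two-term linear combinations of probe vectors built from $\bN$, $\hbSigma^{-1/2}\bu_k$, and $\bv_k$, with coefficients proportional to $(\balpha_k)_k$ and $(\bbeta_k)_k$ themselves. Projecting the first line of (\ref{eq:outlier-vector-rep}) onto $\bv_k$ and the second line onto $\hbSigma^{-1/2}\bu_k$ will yield a $2\times 2$ system relating these coefficients; enforcing $\|\hbv_k\|=1$ will then pin down the overall scale.

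I begin with the projections. In $(\bbeta_k)_k = \bv_k^\T\hbv_k$, the cross term $\bv_k^\T(y_k^2\Id-\bN^\T\bN)^{-1}\bN^\T\hbSigma^{-1/2}\bu_k$ vanishes almost surely because $\bv_k$ is independent of $\bN$ and $\bN$ is right-orthogonally invariant (an averaging argument identical to the one used for the off-diagonal block of $\widehat{\bM}(y)$ in Lemma~\ref{lem:M-conv}). The diagonal term $\bv_k^\T(y_k^2\Id-\bN^\T\bN)^{-1}\bv_k$ concentrates on its trace, whose limit is $-\sbar(y_k^2)$ by Proposition~\ref{prop:stieltjes}. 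This yields $(\bbeta_k)_k \asympeq -\sigma_k y_k \sbar(y_k^2) (\balpha_k)_k$. Projecting the second line similarly onto $\hbSigma^{-1/2}\bu_k$ and invoking the free-probability trace limit (\ref{eq:aux0}) (which supplies the factor $\tau_k$) yields $(\balpha_k)_k \asympeq -\sigma_k y_k \tau_k \res(y_k^2)(\bbeta_k)_k$. Multiplying these two relations reproduces the outlier equation $\sigma_k^2\tau_k\psi(y_k^2)=1$ from Theorem~\ref{thm:detectability-pt} and so determines only the \emph{ratio} $(\balpha_k)_k/(\bbeta_k)_k$, not the individual magnitudes.

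The scale comes from the normalization $\|\hbv_k\|^2=1$. Squaring the expression for $\hbv_k$, the cross term again vanishes by invariance. For the $(\bbeta_k)_k^2$ coefficient, I use the identity $\bN(y_k^2\Id-\bN^\T\bN)^{-2}\bN^\T = \bN\bN^\T(y_k^2\Id-\bN\bN^\T)^{-2} = y_k^2(y_k^2\Id-\bN\bN^\T)^{-2} - (y_k^2\Id-\bN\bN^\T)^{-1}$ together with differentiation of (\ref{eq:aux0}) with respect to $z$; this gives limit $\sigma_k^2\tau_k\bigl(\res(y_k^2)+y_k^2\res'(y_k^2)\bigr)$. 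The $(\balpha_k)_k^2$ coefficient equals $\sigma_k^2 y_k^2\cdot \bv_k^\T(y_k^2\Id-\bN^\T\bN)^{-2}\bv_k$, which converges to $\sigma_k^2 y_k^2 \sbar'(y_k^2)$ by Proposition~\ref{prop:stieltjes}. Eliminating $(\balpha_k)_k^2$ through the coupling and substituting $\sigma_k^2\tau_k = 1/\psi(y_k^2)$, the bracketed factor collapses to $\tau_k\psi'(y_k^2)/\sbar(y_k^2)$ via the product rule $\psi'(z) = \sbar(z)\res(z) + z\sbar'(z)\res(z) + z\sbar(z)\res'(z)$. This gives $(\bbeta_k)_k^2 \asympeq \sbar(y_k^2)\psi(y_k^2)/\psi'(y_k^2) = \underline{c}_k^2$, and the coupling then delivers $(\balpha_k)_k^2 \asympeq \tau_k\res(y_k^2)\psi(y_k^2)/\psi'(y_k^2) = \tilde{c}_k^2$. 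The product $(\balpha_k)_k(\bbeta_k)_k \asympeq \tilde{c}_k\underline{c}_k$ follows directly from the coupling, with the sign of the square root fixed unambiguously through the outlier equation.

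The principal obstacle is the computation of limits arising in $\|\hbv_k\|^2$: identifying the correct value of the ``squared-resolvent'' quadratic form $\bu_k^\T\hbSigma^{-1/2}\bN\bN^\T(y_k^2\Id-\bN\bN^\T)^{-2}\hbSigma^{-1/2}\bu_k$ requires both the free-probability apparatus deployed in Lemma~\ref{lem:M-conv} and careful differentiation of the limiting identity in $z$. The remaining algebra, including the reduction to the clean form $\sbar\psi/\psi'$ and sign-tracking, is mechanical once the key identity $\psi' = \sbar\res + z\sbar'\res + z\sbar\res'$ is in hand.
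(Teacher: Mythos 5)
Your proof is correct, and it takes a genuinely different route from the paper's. The paper uses \emph{two} normalization constraints, $\|\hbv_k\|^2=1$ and $\|\hbu_k^w\|^2=1$, which (after trace concentration) produce the symmetric $2\times 2$ linear system (\ref{eq:lem3-mat-lim}) in the unknowns $(\balpha_k)_k^2$ and $(\bbeta_k)_k^2$; it then solves that system, and finally establishes the sign of $(\balpha_k)_k(\bbeta_k)_k$ by a separate argument (writing $\yval_k={\bu_k^w}^\T\bY^w\bv_k\le\balpha_k^\T\bLambda\bbeta_k+\|\bN\|$ and using that $\yval_k$ exceeds $\|\bN\|$ asymptotically). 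You instead extract two \emph{linear} coupling relations by projecting each line of (\ref{eq:outlier-vector-rep}) onto the appropriate probe vector, which determine the ratio and relative sign of $(\balpha_k)_k$ and $(\bbeta_k)_k$ directly; the consistency of the two couplings reproduces the outlier equation $\sigma_k^2\tau_k\psi(y_k^2)=1$, and a single normalization $\|\hbv_k\|^2=1$ then fixes the overall scale. The payoffs differ slightly: the paper's $2\times2$ route treats $\hbu^w_k$ and $\hbv_k$ symmetrically and never needs to reason about signs until the end; your linear-coupling route determines the sign of $(\balpha_k)_k(\bbeta_k)_k$ automatically (since the coupling coefficient $-\sigma_k y_k\sbar(y_k^2)$ is positive because $\sbar<0$ to the right of the bulk), eliminating the paper's separate sign argument and reducing the amount of trace computation, at the cost of a slightly longer chain of algebraic substitutions. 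One small presentational remark: your claim that the sign is ``fixed unambiguously through the outlier equation'' is a touch imprecise --- what actually fixes it is the negativity of $\sbar(y_k^2)$ in the coupling relation --- but this does not affect the validity of the argument.
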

\begin{proof}
By definition, $\|\hbv_k\|^2=1$. Consequently, by (\ref{eq:outlier-vector-rep}),
\begin{align*}
    1 
&\asympeq 
\sigma_k^2 (\bbeta_k)_k^2 \cdot \bu_k^\T \hbSigma^{-1/2} \bN (y_k^2\Id-\N^\T \N)^{-2}\bN^\T \hbSigma^{-1/2} \bu_k
+
(\sigma_k y_k)^2 (\balpha_k)_k^2 \cdot \bv_k^\T (y_k^2\Id-\N^\T \N)^{-2}
    \bv_k \\
&+ \sigma_k^2 y_k (\bbeta_k)_k(\balpha_k)_k \cdot 
\bv_k^\T (y_k^2\Id-\N^\T \N)^{-2}\bN^\T \hbSigma^{-1/2} \bu_k \,.
\end{align*}
Note the third term above vanishes asymptotically, by the independence of $\bu_k=\bD_k \bw_k$. Similarly, using $\|\bu_k^w\|^2=1$ in (\ref{eq:outlier-vector-rep}), 
and replacing the quadratic forms by the corresponding traces (as in the proof of Theorem~\ref{thm:detectability-pt} above)
yields the system of equations,
\begin{align}\label{eq:lem3-mat}
\begin{bmatrix}
1/\sigma_k^2 \\ 1/\sigma_k^2
\end{bmatrix} \asympeq
\begin{bmatrix} & p^{-1}\tr\left(\bD_k^\T  \hbSigma^{-1/2} \bN (y_k^2\Id-\N^\T \N)^{-2}\bN^\T \hbSigma^{-1/2} \bD_k \right) 
& y_k^2 \cdot n^{-1}\tr (y_k^2\Id-\N^\T \N)^{-2} \\
&y_k^2 \cdot p^{-1}\tr \left( \bD_k^\T  \hbSigma^{-1/2}(y_k^2\Id-\bN\bN^\T)^{-2}\hbSigma^{-1/2} \bD_k \right)
& n^{-1}\tr \left( \bN^\T (y_k^2\Id-\bN\bN^\T)^{-2}\bN \right)
\end{bmatrix}
\begin{bmatrix}
(\bbeta_k)_k^2\\
(\balpha_k)_k^2
\end{bmatrix} \,.
\end{align}
Next, we calculate limiting expressions for the coefficients. Using Proposition~\ref{prop:stieltjes}, the right column of (\ref{eq:lem3-mat}) is
\begin{align}\label{eq:aux1}
    &y_k^2 \cdot n^{-1}\tr(y_k^2 \Id - \N^\T \bN)^{-2} \asympeq y_k^2 \sbar'(y_k^2), \\
    n^{-1}\tr \left( \bN^\T (y_k^2\Id-\bN\bN^\T)^{-2}\bN \right) 
    &= 
    y_k^2 n^{-1}\tr(y_k^2\Id -\bN\bN^\T)^{-2} - n^{-1}\tr(y_k^2 \Id - \bN\bN^\T)^{-1} 
    \asympeq y_k^2 \sbar'(y_k^2) + \sbar(y_k^2) \,.\nonumber
\end{align}
In (\ref{eq:aux0}) we have calculated:
\begin{align*}
    H_k(z)&=\lim_{p\to\infty} p^{-1}\tr \left( \bD_k^\T  \hbSigma^{-1/2}(z\Id-\bN\bN^\T)^{-1}\hbSigma^{-1/2} \bD_k \right) = -\tau_k\res(z) \\
    -H_k'(z)&=\lim_{p\to\infty} p^{-1}\tr \left( \bD_k^\T  \hbSigma^{-1/2}(z\Id-\bN\bN^\T)^{-2}\hbSigma^{-1/2} \bD_k \right) =  \tau_k\res'(z)
    \,.
\end{align*}
Thus, the bottom left entry of (\ref{eq:lem3-mat}) is $\asympeq \tau_k y_k^2 \res'(y_k^2) $. As for the top left entry, observe that $\bN (y_k^2\Id-\N^\T \N)^{-2}\bN^\T = (y_k^2\Id-\bN\bN^\T)^{-2}\N \bN^\T$. Consequently, this entry tends to $\asympeq -y_k^2 H_k'(y_k^2) - H_k(y_k^2) = \tau_k y_k^2\res'(y_k^2) + \tau_k\res(y_k^2) $. 
Recalling that $1/\sigma_k^2=\tau_k \psi(y_k^2)=y_k^2\res(y_k^2)\sbar(y_k^2)$ and plugging the aforementioned limits in (\ref{eq:lem3-mat}), we deduce that $(\balpha_k)_k^2,(\bbeta_k)_k^2$ asymptotically satisfy
\begin{align}\label{eq:lem3-mat-lim}
\begin{bmatrix}
\tau_k\psi(y_k^2) \\ \tau_k\psi(y_k^2)
\end{bmatrix} \asympeq
\begin{bmatrix} &\tau_k y_k^2\res'(y_k^2) + \tau_k \res(y_k^2)
& y_k^2 \sbar'(y_k^2) \\
& \tau_k y_k^2 \res'(y_k^2)
& y_k^2 \sbar'(y_k^2)+ \sbar(y_k^2)
\end{bmatrix}
\begin{bmatrix}
(\bbeta_k)_k^2\\
(\balpha_k)_k^2
\end{bmatrix} \,.
\end{align}
Solving this system yields the claimed expressions. Lastly, to conclude the calculation of $(\balpha_k)_k (\bbeta_k)_k$, it suffices to show that it is non-negative (since we already computed its modulus). Indeed, by definition,
\begin{align*}
    \yval_k = {\bu_k^w}^\T \bY^w \bv_k = {\bu_k^w}^\T (\hbSigma^{-1/2}\bU\bLambda\bV^\T + \bN) \bv_k \le  \balpha_k^\T \bLambda \bbeta_k + \|\bN\| \,. 
\end{align*}
Now, since $\yval_k$ is an outlier, $\|\bN\|\asympeq \sigmaBulk < y_k \asympeq \yval_k$, hence $\balpha_k^\T \bLambda \bbeta_k > 0$. By Lemma~\ref{lem:vecs-off-diagonal}, $\balpha_k^\T \bLambda \bbeta_k\asympeq \sigma_k (\balpha_k)_k(\bbeta_k)_k$, and so we are done.
\end{proof}

Next, we  calculate the correlations between the recolored empirical singular vectors $\hbSigma^{1/2}\hbu_i^w$ and their corresponding population spikes $\bu_k$.
\begin{lemma}
\label{lem:post-corr}
Suppose that $\sqrt{\tau_k}\sigma_k > \sigmaThresh$. Then Eqs. (\ref{eq:cos}), (\ref{eq:lem:post-corr-2}) and (\ref{eq:cos-off}) hold.
\end{lemma}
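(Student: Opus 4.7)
The plan is to feed the asymptotic representation \eqref{eq:outlier-vector-rep} of $\hbu_k^w$ into $\bu_i^\T\hbSigma^{1/2}\hbu_k^w$ and analyze the two terms that arise. Writing
\begin{equation*}
\bu_i^\T \hbSigma^{1/2}\hbu_k^w \asympeq \sigma_k(\balpha_k)_k\, \bu_i^\T \hbSigma^{1/2}(y_k^2\bI-\bN\bN^\T)^{-1}\bN\bv_k + \sigma_k y_k (\bbeta_k)_k\, \bu_i^\T \hbSigma^{1/2}(y_k^2\bI-\bN\bN^\T)^{-1}\hbSigma^{-1/2}\bu_k \,,
\end{equation*}
I would first show the ``$\bN\bv_k$'' term is asymptotically negligible: since $\Zin$ (hence $\bN$, conditional on $\hbSigma$) is right-orthogonally invariant, one may replace $\bv_k$ by $\mathcal{O}\bv_k$ for a Haar-random $\mathcal{O}\in O(n)$ independent of everything else, and since the vector $\bN^\T(y_k^2\bI-\bN\bN^\T)^{-1}\hbSigma^{1/2}\bu_i$ has bounded norm while $\mathcal{O}\bv_k$ is uniform on the sphere in $\R^n$, a standard concentration estimate shows the inner product is $o(1)$ almost surely. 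The right factor $\bv_k^\T\hbv_l=(\bbeta_l)_k$ vanishes in the limit whenever $l\neq k$ by Lemma~\ref{lem:vecs-off-diagonal}.

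For the ``$\hbSigma^{-1/2}\bu_k$'' term I would use the Gaussian representation $\bu_i=p^{-1/2}\bD_i\bw_i$ with $\bw_i$ independent of $(\bN,\hbSigma,\bw_k)$. A Hanson--Wright-type concentration for the bilinear form in $\bw_i,\bw_k$ reduces the term to
\begin{equation*}
\Ind\{i=k\}\cdot \sigma_k y_k (\bbeta_k)_k\cdot p^{-1}\tr\!\big(\bD_k^\T\hbSigma^{1/2}(y_k^2\bI-\bN\bN^\T)^{-1}\hbSigma^{-1/2}\bD_k\big) + o(1)\,,
\end{equation*}
which already establishes (\ref{eq:cos-off}) for $i\neq k$. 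For the $i=k$ case the crux is the trace limit. Using $\hbSigma=\bSigma^{1/2}\bS\bSigma^{1/2}$ and $\bN\bN^\T = \hbSigma^{-1/2}\bSigma^{1/2}\bE\bSigma^{1/2}\hbSigma^{-1/2}$, one derives the algebraic identity $\hbSigma^{1/2}(y^2\bI-\bN\bN^\T)^{-1}\hbSigma^{-1/2} = \bSigma^{1/2}\bS(y^2\bS-\bE)^{-1}\bSigma^{-1/2}$, after which cyclic rearrangement turns the trace into $p^{-1}\tr\big(\bS(y_k^2\bS-\bE)^{-1}\cdot\bSigma^{-1/2}\bD_k\bD_k^\T\bSigma^{1/2}\big)$. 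Since the bi-orthogonally invariant Wishart pair $\{\bS,\bE\}$ is independent of (and hence asymptotically free from) the algebra generated by $\{\bSigma^{\pm 1/2},\bD_k\bD_k^\T\}$, this trace factorizes asymptotically. Rewriting $\bS(y^2\bS-\bE)^{-1}=(y^2\bI-\bE\bS^{-1})^{-1}$ and noting that $\bE\bS^{-1}$ is similar to $\bN\bN^\T$ (Section~\ref{sec:background-F}), the first factor equals $-\stiel(y_k^2)$ by Proposition~\ref{prop:stieltjes}; the second factor equals $\trlim(\bD_k\bD_k^\T)=1$ by the unit-energy hypothesis.

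Combining these steps gives $\bu_k^\T\hbSigma^{1/2}\hbu_k^w \asympeq -\sigma_k y_k(\bbeta_k)_k\stiel(y_k^2)$. Squaring and substituting $(\bbeta_k)_k^2\to\underline{c}_k^2=\sbar(y_k^2)\psi(y_k^2)/\psi'(y_k^2)$ from Lemma~\ref{lem:pre-cosines-post} together with the spike identity $\tau_k\sigma_k^2\psi(y_k^2)=1$ produces (\ref{eq:cos}); multiplying instead by $\bv_k^\T\hbv_k=(\bbeta_k)_k$ yields (\ref{eq:lem:post-corr-2}), with the stated nonnegativity following from the signs of $\stiel,\sbar,\psi,\psi'$ on $(\sigmaBulk^2,\infty)$. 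I expect the main technical obstacle to be justifying the asymptotic-freeness factorization: although $\{\bS,\bE\}$ is bi-invariant and independent of $\{\bSigma,\bD_k\}$, the factorization involves a resolvent-type function of the Wishart pair paired against the non-self-adjoint expression $\bSigma^{-1/2}\bD_k\bD_k^\T\bSigma^{1/2}$, so one must invoke a suitable joint version of Voiculescu's asymptotic freeness theorem. Careful sign tracking (in particular the identity $\trlim(\bS(y^2\bS-\bE)^{-1})=-\stiel(y^2)$, which arises from $(y^2\bI-A)^{-1}=-(A-y^2\bI)^{-1}$) is also required to produce the correct sign in (\ref{eq:lem:post-corr-2}).
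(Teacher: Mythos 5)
Your proposal is correct and follows the same overall structure as the paper: start from the representation \eqref{eq:outlier-vector-rep}, kill the $\bN\bv_k$ cross term by right-orthogonal invariance of $\bN$, establish \eqref{eq:cos-off} via independence of the $\bw_i$, and then reduce the $k=k$ quadratic form to a deterministic trace. The one place where you diverge from the paper is the factorization of the resulting trace. You pass to the Gaussian $\bw_k$ via Hanson--Wright, obtaining $p^{-1}\tr\bigl(\bS(y_k^2\bS-\bE)^{-1}\bSigma^{-1/2}\bD_k\bD_k^\T\bSigma^{1/2}\bigr)$, and then invoke asymptotic freeness of the orthogonally invariant Wishart pair against the deterministic algebra. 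The paper does something slicker: it applies Lemma~\ref{lem:quadratic-square} directly to the bilinear form with the deliberate pairing $\ba=\bSigma^{1/2}\bu_k$, $\bb=\bSigma^{-1/2}\bu_k$, so that the weight factor collapses to $\ba^\T\bb=\|\bu_k\|^2\asympeq 1$ and the $\bSigma,\bD_k$ structure disappears without ever touching free probability. Both routes are valid and yield $-\stiel(y_k^2)$. Incidentally, the ``main technical obstacle'' you anticipate --- freeness applied to the non-self-adjoint expression $\bSigma^{-1/2}\bD_k\bD_k^\T\bSigma^{1/2}$ --- is not actually an obstacle: cyclicity of the trace gives $\trlim\bigl(\bSigma^{-1/2}\bD_k\bD_k^\T\bSigma^{1/2}\bigr)=\trlim\bigl(\bD_k\bD_k^\T\bigr)=1$ immediately, and Theorem~\ref{thm:freeness} (Collins--\'Sniady) does not require self-adjointness of the non-invariant factor in the first place, only that its algebra admit a joint limiting law, which is guaranteed by assumption 3(b)(iii). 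The sign-tracking you mention at the end is also sound and matches how the paper obtains the stated nonnegativity in \eqref{eq:lem:post-corr-2}.
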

\begin{proof}
We start with the representation (\ref{eq:outlier-vector-rep}) and take an inner product with $\hbSigma^{1/2}\bu_\ell$:
\begin{align*}
    \bu_\ell^\T \hbSigma^{1/2}\hbu_k^w \asympeq \sigma_k\cdot (\balpha_k)_k\cdot \bu_\ell^\T \hbSigma^{1/2}\left(y_k^2\bI - \bN\bN^\T \right)^{-1} \bN \bv_k +
    \sigma_k y_k\cdot (\bbeta_k)_k\cdot  \bu_\ell^\T \hbSigma^{1/2}\left(y_k^2\bI - \bN\bN^\T \right)^{-1} \hbSigma^{-1/2}\bu_k \,.
\end{align*}
The first term is asymptotically vanishing, and the second term is possibly non-vanishing only when $k=\ell$ (since the $\bu_k$-s are independent), and so (\ref{eq:cos-off}) is established. We have
\begin{align*}
    \bu_k^\T \hbSigma^{1/2}\left(y_k^2\bI - \bN\bN^\T \right)^{-1} \hbSigma^{-1/2}\bu_k
    &= \bu_k^\T \hbSigma\left(y_k^2\hbSigma - \bSigma^{1/2}\bE\bSigma^{1/2} \right)^{-1} \bu_k = \bu_k^\T \bSigma^{1/2}\bS\left(y_k^2\bS - \bE \right)^{-1}\bSigma^{-1/2} \bu_k
    \\
    &\overset{(i)}{\asympeq} 
    p^{-1}\tr\left(\bS\left(y_k^2\bS - \bE \right)^{-1} \right) \cdot \langle \bSigma^{1/2}\bu_k,\bSigma^{-1/2}\bu_k\rangle 
    \overset{(ii)}{\asympeq} -\stiel(y_k^2)\,,
\end{align*}
where (i) follows from the orthogonal invariance of $\bS(y_k^2\bS-\bE)^{-1}$; and (ii) follows from Proposition~\ref{prop:stieltjes}, writing $\tr( \bS(y_k^2\bS-\bE)^{-1})=\tr (y_k^2\bI-\bS^{-1/2}\bE\bS^{-1/2})^{-1}=\tr(y_k^2\bI-\bN\bN^\T)^{-1}$  (recall that the Stieltjes transform does not depend on $\bSigma$) and the normalization $\|\bu_k\|^2\asympeq p^{-1}\|\bD_k\|_F^2\asympeq1$. Recall that $(\bv_k^\T \hbv_k^w)^2=(\bbeta_k)_k^2\asympeq \underline{c}_k^2$ was computed in Lemma~\ref{lem:pre-cosines-post}. Thus,
\begin{align*}
    (\bu_k^\T \hbSigma^{1/2}\hbu_k^w)^2 \asympeq \sigma_k^2 y_k^2 \cdot \underline{c}_k^2 \cdot \stiel(y_k)^2,\quad 
    (\bu_k^\T \hbSigma^{1/2}\hbu_k^w)(\bv_k^\T \hbv_k^w) \asympeq -\sigma_k y_k \cdot \underline{c}_k^2 \cdot \stiel(y_k) \,.
\end{align*}
Finally, use $\sigma_k^2 = 1/\left(\tau_k \psi(y_k^2)\right)$ to get the expressions in Eqs. (\ref{eq:cos}) and (\ref{eq:lem:post-corr-2}).
\end{proof}

It remains to compute $\|\hbSigma^{1/2}\hbu_i^w\|^2$. To this end, multiply (\ref{eq:outlier-vector-rep}) by $\hbSigma^{1/2}$ and take the norm,
\begin{align*}
    \|\hbSigma^{1/2}\hbu_k^w\|^2 
    &\asympeq \left\| \sigma_k (\balpha_k)_k \hbSigma^{1/2}\left(y_k^2\bI - \bN\bN^\T \right)^{-1} \bN \bv_k +
    \sigma_k y_k (\bbeta_k)_k  \hbSigma^{1/2}\left(y_k^2\bI - \bN\bN^\T \right)^{-1} \hbSigma^{-1/2}\bu_k \right\|^2 \\
    &\overset{(i)}{\asympeq} 
    \sigma_k^2 \tilde{c}_k^2 \bv_k^\T \bN^\T \left(y_k^2\bI - \bN\bN^\T \right)^{-1} \hbSigma\left(y_k^2\bI - \bN\bN^\T \right)^{-1} \bN \bv_k \\
    &\quad+ \sigma_k^2 y_k^2\underline{c}_k^2 \bu_k^\T \hbSigma^{-1/2}(y_k^2-\bN\bN^\T)^{-1} \hbSigma (y_k^2-\bN\bN^\T)^{-1}\hbSigma^{-1/2}\bu_k \\
    &\overset{(ii)}{\asympeq} 
    \sigma_k^2 \tilde{c}_k^2\cdot n^{-1}\tr\left(\bN^\T \left(y_k^2\bI - \bN\bN^\T \right)^{-1} \hbSigma\left(y_k^2\bI - \bN\bN^\T \right)^{-1} \bN  \right)\\
    &\quad + \sigma_k^2 y_k^2\underline{c}_k^2 \cdot p^{-1}\tr \left( \bD_k^\T \hbSigma^{-1/2}(y_k^2\Id-\bN\bN^\T)^{-1} \hbSigma (y_k^2\Id-\bN\bN^\T)^{-1}\hbSigma^{-1/2}\bD_k \right)\,,
\end{align*}
where: (i) we discarded the cross term, which is asymptotically vanishing; and (ii) similarly to previous calculations, the quadratic forms concentrate around the traces. Define
\begin{equation}
    \begin{split}
        G_1(z) &= \lim_{p\to\infty} n^{-1}\tr\left(\bN^\T \left(z\bI - \bN\bN^\T \right)^{-1} \hbSigma\left(z\bI - \bN\bN^\T \right)^{-1} \bN  \right),\\
        G_{2,k}(z) &= 
        \lim_{p\to\infty} p^{-1}\tr \left( \bD_k^\T \hbSigma^{-1/2}(z\Id-\bN\bN^\T)^{-1} \hbSigma (z\Id-\bN\bN^\T)^{-1}\hbSigma^{-1/2}\bD_k \right) \,,
    \end{split}
\end{equation}
so that 
\begin{equation}\label{eq:aux:norm}
    \|\hbSigma^{1/2}\hbu_k^w\|^2 \asympeq \sigma_k^2 \tilde{c}_k^2\cdot G_1(y_k^2) + \sigma_k^2 y_k^2\underline{c}_k^2 \cdot G_{2,k}(y_k^2) \,.
\end{equation}
Also define the following mixed traces
\begin{align}
\label{eq:UpsOne-def-trace}
    &\UpsOne(z) = \lim_{p\to\infty} p^{-1} \tr (z\bS-\bE)^{-1}\bS^2,\\
\label{eq:UpsTwo-def-trace}
    &\UpsTwo(z) = \lim_{p\to\infty} p^{-1}\tr (z\bS-\bE)^{-2}\bS^2 \,,
\end{align}
where $z\in \CC\setminus [\sigmaMin^2,\sigmaBulk^2]$ and the limit exists a.s. We show in  Appendix, Section~\ref{sec:New:Ups} that $\UpsOne(\cdot),\UpsTwo(\cdot)$ have the closed-form formulas given in Eqs. (\ref{eq:New:UpsOneFormula}) and (\ref{eq:New:UpsTwoFormula}). 
\begin{lemma}
\label{lem:G1}
    Let $\UpsOne(\cdot)$ be defined in (\ref{eq:UpsOne-def-trace}), and recall the notation $\mu=p^{-1}\tr(\bSigma)$. Then 
    \begin{equation}
        G_1(z) = -\gamma \mu \cdot \left[ z\UpsOne(z) + \UpsOne'(z)   \right] \,,\quad z\in \CC\setminus [\sigmaMin^2,\sigmaBulk^2] .
    \end{equation}
\end{lemma}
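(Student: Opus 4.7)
The plan is to reduce $G_1(z)$ to a trace involving only $(\bSigma, \bS, \bE)$, whose asymptotics are tractable via asymptotic freeness. The key algebraic device---implicit already in the proof of Theorem~\ref{thm:detectability-pt}---is the identity
$$(z\bI - \bN\bN^\T)^{-1} = \hbSigma^{1/2}\bSigma^{-1/2}(z\bS - \bE)^{-1}\bSigma^{-1/2}\hbSigma^{1/2},$$
obtained from the factorization $z\bI - \bN\bN^\T = \hbSigma^{-1/2}\bSigma^{1/2}(z\bS - \bE)\bSigma^{1/2}\hbSigma^{-1/2}$, which in turn uses $\bN\bN^\T = \hbSigma^{-1/2}\bSigma^{1/2}\bE\bSigma^{1/2}\hbSigma^{-1/2}$ and $\hbSigma = \bSigma^{1/2}\bS\bSigma^{1/2}$.

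Combining this identity with cyclic invariance of the trace and the elementary rearrangement $(z\bI - \bN\bN^\T)^{-1}\bN\bN^\T = z(z\bI - \bN\bN^\T)^{-1} - \bI$, I would rewrite
$$n^{-1}\tr\!\left(\bN^\T(z\bI - \bN\bN^\T)^{-1}\hbSigma(z\bI - \bN\bN^\T)^{-1}\bN\right) = -\,n^{-1}A(z) \;-\; z\,n^{-1}A'(z),$$
where $A(z) := \tr((z\bI - \bN\bN^\T)^{-1}\hbSigma)$; here I have used that $\tr((z\bI - \bN\bN^\T)^{-1}\hbSigma(z\bI - \bN\bN^\T)^{-1}) = \tr(\hbSigma(z\bI - \bN\bN^\T)^{-2}) = -A'(z)$ by cyclic invariance and by differentiating under the trace. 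It therefore suffices to determine $\lim_{p\to\infty} n^{-1}A(z)$ and its derivative in $z$.

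Applying the resolvent identity above and invoking the cancellation $\bSigma^{-1/2}\hbSigma\bSigma^{-1/2} = \bS$ (used twice), cycling the trace yields
$$A(z) = \tr\!\left(\bSigma\,\bS\,(z\bS - \bE)^{-1}\bS\right).$$
Since $\bS$ and $\bE$ are independent Wishart matrices with i.i.d. Gaussian entries, both are orthogonally invariant in law; consequently, so is the matrix $\bS(z\bS - \bE)^{-1}\bS$. Invoking asymptotic freeness of this orthogonally-invariant random matrix from the deterministic $\bSigma$ (equivalently, Haar-rotation averaging of its eigenbasis), one obtains
$$p^{-1}A(z) \asympeq p^{-1}\tr(\bSigma)\cdot p^{-1}\tr\!\left(\bS(z\bS - \bE)^{-1}\bS\right) \longrightarrow \mu\,\UpsOne(z),$$
in light of the definition \eqref{eq:UpsOne-def-trace}. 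Rescaling by $p/n \to \gamma$ then gives $n^{-1}A(z) \to \gamma\mu\,\UpsOne(z)$. Since $A(\cdot)$ is holomorphic on $\CC \setminus [\sigmaMin^2, \sigmaBulk^2]$ and the convergence is uniform on compact subsets away from the spectrum (via equicontinuity of the family, as in Lemma~\ref{lem:M-conv}), Vitali's theorem delivers $n^{-1}A'(z) \to \gamma\mu\,\UpsOne'(z)$. Substituting these limits into the preceding display delivers the claimed closed form, with the two terms combining to yield a multiple of $\UpsOne(z) + z\UpsOne'(z) = (z\UpsOne(z))'$.

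The main obstacle is the rigorous justification of the trace factorization underpinning asymptotic freeness: for a bounded random matrix $\bM = \bM(\bS, \bE)$ that is orthogonally invariant in distribution, one must establish $p^{-1}\tr(\bSigma\bM) \asympeq p^{-1}\tr(\bSigma)\cdot p^{-1}\tr(\bM)$ almost surely. This rests on the observation that $(\bS, \bE)$ can be simultaneously conjugated by a common Haar orthogonal matrix $\bO$ without altering their joint distribution (each is individually Haar-invariant, and their independence is preserved), so that the eigenbasis of $\bM(\bS, \bE)$ is effectively a Haar-random orthonormal frame independent of $\bSigma$; a Hanson--Wright-type concentration inequality for quadratic forms under Haar rotation (or equivalently, low-order Weingarten-calculus estimates combined with operator-norm bounds on $\bM$) then yields the factorization with the required rate. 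The other technical step---passing the derivative through the limit---is standard complex analysis once uniform control on compact subsets of the resolvent set is established.
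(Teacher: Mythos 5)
Your proposal is correct and follows essentially the same route as the paper's own proof: expressing $G_1$ through the auxiliary trace $A(z)=\tr((z\bI-\bN\bN^\T)^{-1}\hbSigma)$ and its $z$-derivative via the resolvent rearrangement $(z\bI-\bN\bN^\T)^{-1}\bN\bN^\T = z(z\bI-\bN\bN^\T)^{-1}-\bI$, then reducing $A(z)$ to $\tr(\bSigma\bS(z\bS-\bE)^{-1}\bS)$ and invoking asymptotic freeness of the orthogonally-invariant matrix $\bS(z\bS-\bE)^{-1}\bS$ from $\bSigma$ to factorize the normalized trace as $\mu\,\UpsOne(z)$. Note also that your computation correctly yields $G_1(z)=-\gamma\mu\bigl(\UpsOne(z)+z\UpsOne'(z)\bigr)$, consistent with the definition of $\Efun$ and with the paper's own derivation; the displayed formula in the lemma statement, which writes $z\UpsOne(z)+\UpsOne'(z)$, contains a transposition typo.
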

\begin{proof}
    Let 
    \begin{equation*}
        \tilde{G}_1(z) = \lim_{n\to\infty} n^{-1}\tr\left( (z\Id - \bN\bN^\T)^{-1}\hbSigma \right),\quad\textrm{so that} \quad G_1(z) = -z\tilde{G}_1'(z) - \tilde{G}_1(z)\,.
    \end{equation*}
    Straightforward algebraic manipulation yields $\tr\left( (z\Id - \bN\bN^\T)^{-1}\hbSigma \right) = \tr\left( \bS(z\bS-\bE)^{-1}\bS\bSigma \right)$. Since $\bS(z\bS-\bE)^{-1}\bS$ is orthogonally invariant, $\{\bS(z\bS-\bE)^{-1}\bS,\bSigma\}$ are asymptotically free (see Section~\ref{sec:free-probability}). Thus,
    \[
    \lim_{p\to\infty} p^{-1} \tr\left( \bS(z\bS-\bE)^{-1}\bS\bSigma \right) = \trlim(\bS(z\bS-\bE)^{-1}\bS)\cdot  \trlim(\bSigma) = \mu\cdot \UpsOne(z) \,,
    \]
    and the lemma follows.
\end{proof}


\begin{lemma}
\label{lem:G2}
    Let $\UpsTwo(\cdot)$ be defined in (\ref{eq:UpsTwo-def-trace}). Then
      \begin{equation}
        G_{2,k}(z) = \mu\tau_k\left[ \UpsTwo(z)-\left(\stiel(z)\right)^2 \right] + \left(\stiel(z)\right)^2\,,\quad z\in \CC\setminus [\sigmaMin^2,\sigmaBulk^2] .
    \end{equation}
\end{lemma}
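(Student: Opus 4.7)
The plan is to reduce $G_{2,k}(z)$ to a limiting trace of a single \emph{alternating} word over two asymptotically free matrix algebras, and then invoke the standard second-order moment identity for such words. Starting from $\hbSigma=\bSigma^{1/2}\bS\bSigma^{1/2}$ and $\hbSigma^{1/2}\bN\bN^\T\hbSigma^{1/2}=\bSigma^{1/2}\bE\bSigma^{1/2}$, algebraic manipulations identical to those in the proof of Lemma~\ref{lem:M-conv} yield $\hbSigma^{-1/2}(z\Id-\bN\bN^\T)^{-1}\hbSigma^{1/2}=\bSigma^{-1/2}(z\bS-\bE)^{-1}\bS\bSigma^{1/2}$ together with its transpose $\hbSigma^{1/2}(z\Id-\bN\bN^\T)^{-1}\hbSigma^{-1/2}=\bSigma^{1/2}\bS(z\bS-\bE)^{-1}\bSigma^{-1/2}$. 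Multiplying these two factors collapses the central $\hbSigma$, and cycling the $\bD_k$-factors under the trace produces
\begin{equation*}
G_{2,k}(z) = \trlim\bigl(\bB_1\,\bA_1\,\bB_2\,\bA_2\bigr),
\end{equation*}
where $\bB_1:=\bSigma^{-1/2}\bD_k\bD_k^\T\bSigma^{-1/2}$ and $\bB_2:=\bSigma$ lie in the algebra $\mathcal{A}_1$ generated by $\{\bSigma,\bSigma^{-1},\bD_k\}$, while $\bA_1:=(z\bS-\bE)^{-1}\bS$ and $\bA_2:=\bS(z\bS-\bE)^{-1}$ lie in the algebra $\mathcal{A}_2$ generated by $\{\bS,\bE\}$. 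Importantly $\bA_1\ne\bA_2$, so the word is genuinely alternating rather than of the form $\bB\bQ\bB'\bQ$ with a single $\bQ$.

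Because $\bS,\bE$ are independent real Wishart matrices, $(\bS,\bE)$ is jointly orthogonally invariant, so the (strong, a.s.) form of Voiculescu's asymptotic freeness theorem guarantees that $\mathcal{A}_2$ is asymptotically free from $\mathcal{A}_1$---whose limiting joint distribution exists by Assumption~3b.iii. For any alternating length-four word between two free subalgebras, centering all four factors and discarding the vanishing mixed moments yields the identity
\begin{equation*}
\trlim(\bB_1\bA_1\bB_2\bA_2) = \trlim(\bA_1\bA_2)\,\trlim(\bB_1)\,\trlim(\bB_2) + \trlim(\bA_1)\,\trlim(\bA_2)\bigl[\trlim(\bB_1\bB_2)-\trlim(\bB_1)\,\trlim(\bB_2)\bigr].
\end{equation*}

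The remaining task is to evaluate the five moments, each of which is essentially already in the paper. From (\ref{eq:tau-def}), $\trlim(\bB_1)=\trlim(\bD_k^\T\bSigma^{-1}\bD_k)=\tau_k$; from (\ref{eq:first-moment-H}), $\trlim(\bB_2)=\mu$; and the unit-energy assumption gives $\trlim(\bB_1\bB_2)=\trlim(\bD_k\bD_k^\T)=1$. Rewriting $\bA_2=\bS(z\bS-\bE)^{-1}=(z\Id-\bE\bS^{-1})^{-1}$ (and similarly for $\bA_1$) shows its spectrum coincides with that of $(z\Id-\bN\bN^\T)^{-1}$, so Proposition~\ref{prop:stieltjes} yields $\trlim(\bA_1)=\trlim(\bA_2)=\stiel(z)$. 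Finally, cyclicity gives $\trlim(\bA_1\bA_2)=\trlim\bigl((z\bS-\bE)^{-1}\bS^2(z\bS-\bE)^{-1}\bigr)=\trlim\bigl(\bS^2(z\bS-\bE)^{-2}\bigr)=\UpsTwo(z)$ directly from the definition (\ref{eq:UpsTwo-def-trace}). Substituting and collecting, the alternating moment identity collapses to $G_{2,k}(z)=\mu\tau_k\bigl[\UpsTwo(z)-\stiel(z)^2\bigr]+\stiel(z)^2$, as claimed.

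The main conceptual trap would be to conflate $\bA_1$ with $\bA_2$ and apply the simpler single-$\bQ$ mixed-moment formula $\trlim(\bB\bQ\bB'\bQ)=\trlim(\bQ)^2\trlim(\bB\bB')+(\trlim(\bQ^2)-\trlim(\bQ)^2)\trlim(\bB)\trlim(\bB')$, which would replace $\UpsTwo(z)$ by $\trlim(\bA_1^2)=\stiel'(z)$ and give a different (incorrect) closed form. Keeping the two factors distinct throughout is what makes the genuine alternating structure visible. The only other subtlety, namely upgrading the freeness from in-expectation to a.s., is routine: the same Hanson--Wright argument and a.s.\ uniform operator-norm bound $\|(z\bS-\bE)^{-1}\|=O(1)$ (which holds as long as $z\notin[\sigmaMin^2,\sigmaBulk^2]$ by Theorem~\ref{thm:wachter-edge}) that appeared in Lemmas~\ref{lem:M-conv} and \ref{lem:pre-cosines-post} apply here essentially verbatim.
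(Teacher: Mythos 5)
Your proof is correct and, at bottom, the same argument as the paper's: both reduce $G_{2,k}(z)$ to the free-probability trace $\trlim\bigl(\bC_k\,\bA_z\,\bSigma\,\bA_z^\T\bigr)$ with $\bA_z=(z\bS-\bE)^{-1}\bS$, exploit asymptotic freeness of the orthogonally-invariant algebra generated by $\{\bS,\bE\}$ from the deterministic algebra generated by $\{\bSigma,\bSigma^{-1},\bD_k\}$, and then expand. The paper carries out the expansion by successively centering one factor at a time and discarding vanishing alternating products; you instead invoke the closed-form mixed-moment identity $\tau(b_1a_1b_2a_2)=\tau(a_1a_2)\tau(b_1)\tau(b_2)+\tau(a_1)\tau(a_2)\left[\tau(b_1b_2)-\tau(b_1)\tau(b_2)\right]$ for an alternating length-four word between two free subalgebras, which is precisely what that iteration produces. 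Your emphasis that $\bA_1\ne\bA_2$ (so the word is genuinely alternating, not of the form $\bB\bQ\bB'\bQ$) and that consequently the relevant cross-moment is $\trlim(\bA_1\bA_2)=\UpsTwo(z)$ rather than $\stiel'(z)$ is exactly the right point to flag.

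One small slip: by Proposition~\ref{prop:stieltjes} and the sign convention used elsewhere in the paper (see Eq.~(\ref{eq:New:2})), one has $\trlim(\bA_1)=\trlim\bigl((z\bS-\bE)^{-1}\bS\bigr)=-\stiel(z)$, not $+\stiel(z)$: the similarity you exhibit gives the spectrum of $(z\Id-\bN\bN^\T)^{-1}$, whose normalized trace is minus the Stieltjes transform evaluated at $z$. Since only the product $\trlim(\bA_1)\trlim(\bA_2)=\stiel(z)^2$ enters your final identity, the conclusion is unaffected; but the sign would matter in any calculation using $\trlim(\bA_1)$ on its own, so it is worth getting right.
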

We provide a proof in Appendix, Section~\ref{sec:proof-lem:G2}. To conclude the computation of $\|\hbSigma^{1/2}\hbu_k^w\|^2$, and thereby the proof of Theorem~\ref{thm:products}, combine (\ref{eq:aux:norm}) with Lemmas~\ref{lem:G1} and \ref{lem:G2}. 
\end{proof}

\section{Numerical results}
\label{sec:numerical}

We report on several numerical experiments that illustrate both the performance of the denoising algorithm relative to other methods, and the agreement between the asymptotic results and finite sample estimates for both Gaussian and non-Gaussian noise. One of the questions addressed in this section is under what parameter settings the optimal Whiten-Shrink- re-Color (WSC) denoiser outperforms OptShrink, which is optimal shrinkage without any pre-transformation \cite{nadakuditi2014optshrink}. It was shown in \cite{leeb2021optimal} that when the signal principal components are uniformly random, optimal shrinkage with \emph{oracle} whitening (corresponding to $\beta = 0$) outperforms OptShrink for heteroscedastic noise; consequently, optimal shrinkage with pseudo-whitening (when $\beta > 0$) will still outperform OptShrink when $\beta$ is sufficiently small, though the precise value of $\beta$ will depend on the other problem parameters, such as $\gamma$ and the heteroscedasticity of the noise. 

Sections \ref{sec:errors} and \ref{sec:critical_beta}  numerically illustrate this behavior, by comparing the errors of the two methods in different parameter regimes and evaluating the $\beta$ at which the two methods have identical error. Section \ref{sec:detection} explores a different but related phenomenon, comparing the minimum detectable signal singular value of the two methods. Section \ref{sec:pc_estimation} examines the two methods' performances in estimating the signal principal components. Section \ref{sec:comparison} compares different shrinkage methods on finite-sample data, again showing that the relative performance of pseudo-whitening over OptShrink increases as the heteroscedasticity of the noise increases.

The experiments in Section \ref{sec:convergence_nongaussian} illustrate two phenomena: first, that the asymptotic results appear to hold for non-Gaussian noise with sufficiently many moments (``universality''), though they break down when the noise becomes too fat-tailed; and second, that the spiked model parameters appear to converge at the rate of approximately $O(p^{-1/2})$ for thin-tailed distributions.

All experiments reported in this section were performed in Matlab. Code may be found online at \texttt{https://github.com/wleeb/FShrink}.

\subsection{Plots of optimal shrinkers}

We plot the  asymptotically optimal singular values $t_1$ as functions of the population singular values and of the observed singular values, for $\gamma = 1/2$, covariance $\bSigma \in \RR^{1000 \times 1000}$ with eigenvalues linearly spaced between $1/500$ and $1$, and $\bD_1 = \bI_{1000}$. The optimal singular values are computed for different values of $\beta$, and plotted in Figure \ref{figure.vals}. Note that $\beta=0$ corresponds to the oracle singular values (where $\bSigma$ is known exactly), used in \cite{leeb2021optimal}. For larger $\beta$, the optimal singular values decrease in value; that is, more shrinkage is needed to account for the increased uncertainty in the estimate of $\bSigma$.

\begin{figure}
\centering
\begin{subfigure}{0.4\textwidth}
\includegraphics[scale=.4]{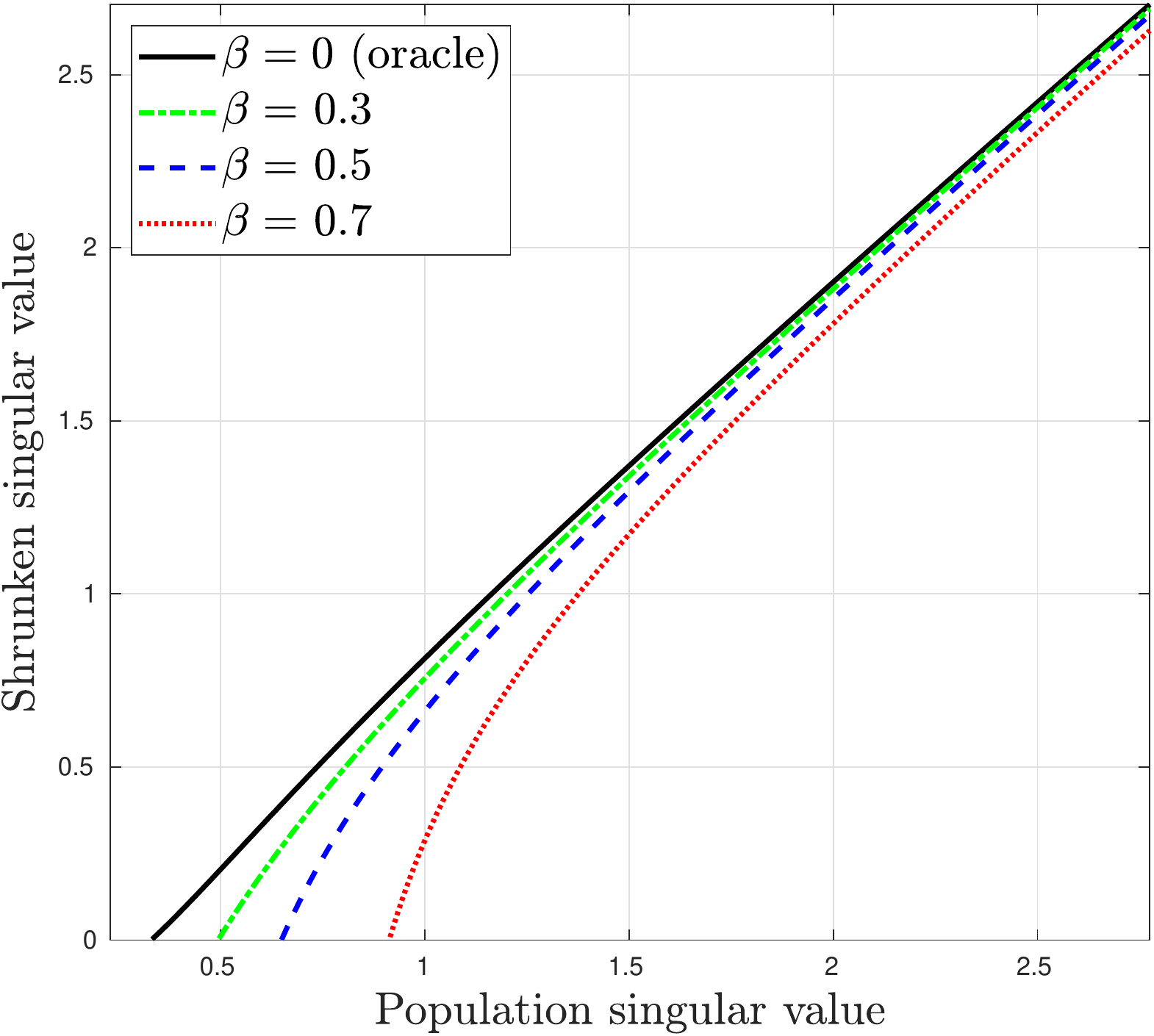}
\end{subfigure}
\hspace{.5in}
\begin{subfigure}{0.4\textwidth}
\includegraphics[scale=.4]{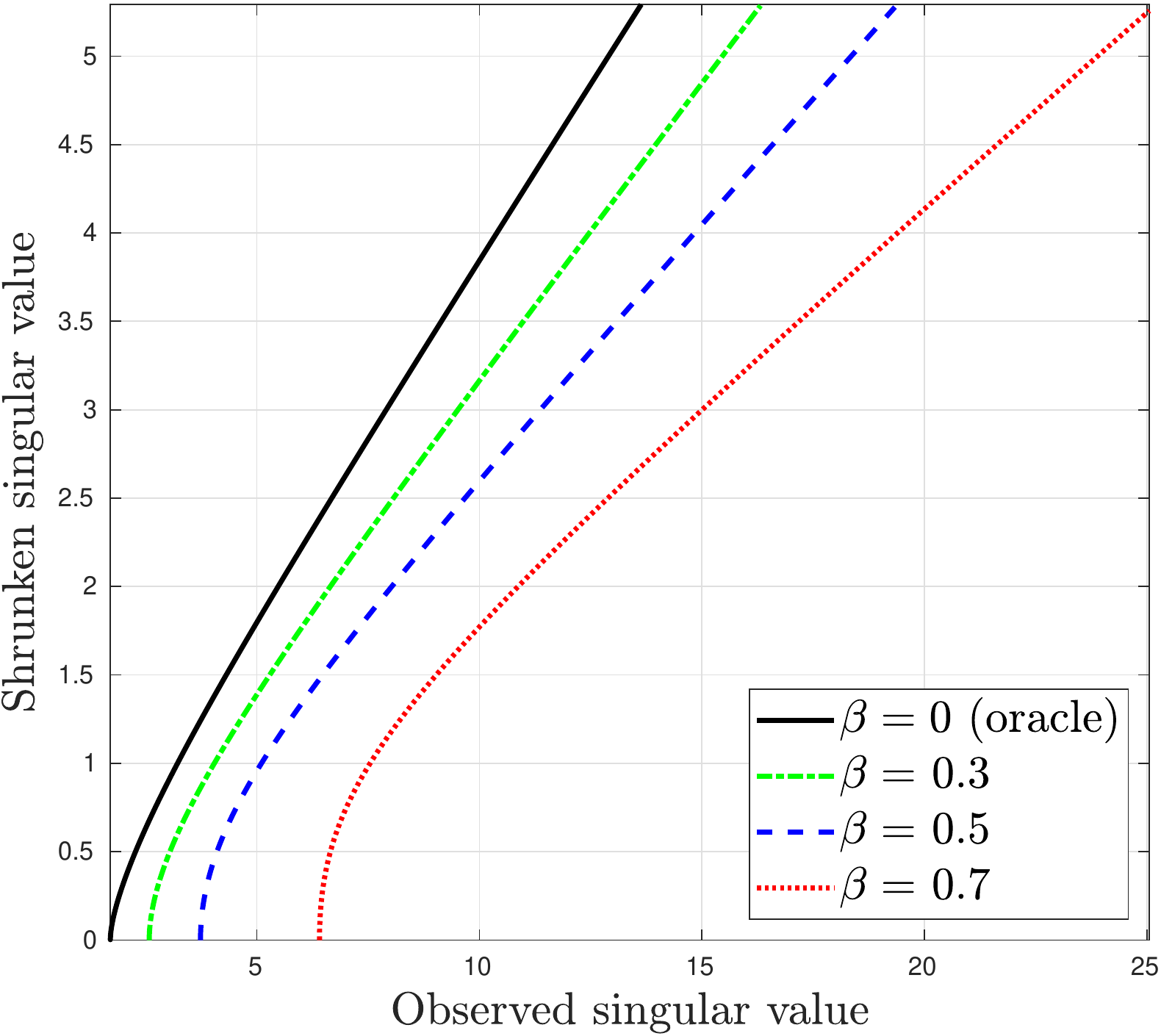}
\end{subfigure}
\caption{Plots of the asymptotically optimal singular values as functions of the population singular value (left) and of the observed singular value (right), for different values of $\beta$.}
\label{figure.vals}
\end{figure}

\subsection{Asymptotic errors}
\label{sec:errors}

The previous work \cite{leeb2021optimal} shows, assuming isotropic random spike directions, that an oracle WSC denoiser (which has access to $\bSigma$; equivalently, $\beta=0$ under our setup) 
outperforms optimal singular value shrinkage (OptShrink) \cite{nadakuditi2014optshrink}. Consequently, pseudo-whitening must also  outperform OptShrink provided that $\beta$ is sufficiently small, that is, one has access to sufficiently many pure-noise samples.  Though the precise value of $\beta$ at which optimal shrinkage with pseudo-whitening has superior performance is not immediately transparent from the error formulas, it can be evaluated numerically. In this set of experiments, we compare the asymptotic errors of optimal shrinkage with pseudo-whitening, comparing them to the asymptotic errors obtained by OptShrink; for context, we also show a comparison with the oracle WSC denoiser from \cite{leeb2021optimal} (which outperforms both methods, but is not a viable procedure in the setting of this paper).

\subsubsection{Linearly-spaced eigenvalue spectrum}
\label{sec:lin_spaced}

For a specified aspect ratio $\gamma > 0$ and condition number $\kappa \ge 1$, we consider a diagonal noise covariance $\bSigma$ with $p=2000$ linearly spaced eigenvalues whose maximum and minimum elements have ratio $\kappa$, and that are normalized so that $\tau = 1$; that is,
\begin{align}
\label{eq:unit_tau}
\frac{1}{p} \sum_{i=1}^{p} \frac{1}{\bSigma_{ii}} = 1.
\end{align}
For $\kappa = 50,000$, we compute $\sigmaBBP$, the asymptotically largest singular value of the noise matrix $\bSigma^{1/2} \bZ / \sqrt{n}$, using the numerical method introduced in \cite{leeb2021rapid} (the term BBP is from the paper \cite{baik2005phase}); this method evaluates $\sigmaBBP$ by numerically solving the equations that implicitly characterize $\sigmaBBP$. We consider a rank $1$ $p$-by-$n$ signal matrix $\bX$ with i.i.d. singular vectors $\bu$ and $\bv$ and singular value
\begin{math}
\sigma = \sigmaBBP + 1.
\end{math}
%
For these parameters, the asymptotic errors for optimal shrinkage with exact whitening are then computed using the formula from \cite{leeb2021optimal}, while the asymptotic error for OptShrink is evaluated numerically using the method from \cite{leeb2021rapid}, which numerically solves the Stieltjes transform of the limiting spectral distribution of the sample covariance of the noise. For optimal shrinkage with pseudo-whitening, we consider a range of aspect ratios $\beta$ between $0$ and $0.95$, and evaluate the AMSE for each value of $\beta$. These values are plotted as functions of $\beta$ in Figure \ref{figure.errs}, alongside the asymptotic errors of the other two methods. As the plots demonstrate, for each value of $\gamma$ and $\kappa$ the error of shrinkage with pseudo-whitening approaches that of shrinkage with exact whitening as $\beta \to 0$. Furthermore, for each value of $\gamma$, as $\kappa$ grows (that is, as the noise becomes more heteroscedastic) the performance of shrinkage with pseudo-whitening improves relative to OptShrink; when $\kappa = 50,000$, shrinkage with pseudo-whitening outperforms OptShrink for the entire considered range of $\beta$.

\begin{figure}
\center
\includegraphics[scale=.5]{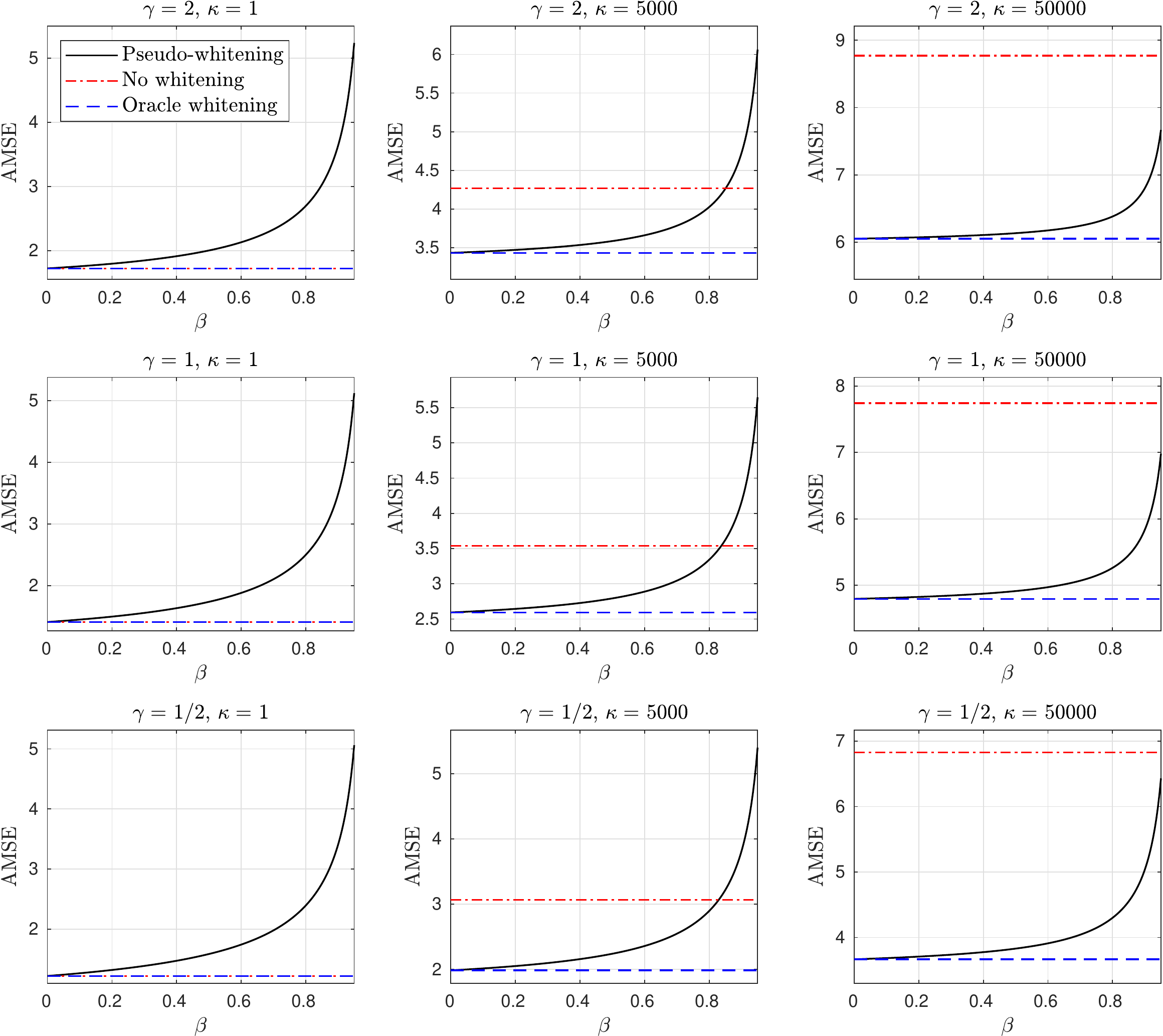}
\caption{Asymptotic errors for the optimal WSC denoiser with pseudo-whitening as a function of $\beta \sim p/m$ for different values of $\gamma \sim p/n$ and parameter $\kappa$, where the noise covariance has linearly spaced eigenvalues and condition number $\kappa$. For reference, we also plot the asymptotic errors for the oracle WSC denoiser
(from \cite{leeb2021optimal}) and optimal shrinkage without whitening (OptShrink, from \cite{nadakuditi2014optshrink}). The errors for OptShrink are numerically evaluated using the method from \cite{leeb2021rapid}. Our numerical results demonstrate that the performance gains offered by whitening become more pronounced as the condition number $\kappa$ increases.
} 
\label{figure.errs}
\end{figure}

\subsubsection{Polynomially-decaying eigenvalue spectrum}
\label{sec:poly_decay}

For a specified aspect ratio $\gamma > 0$ and parameter $\alpha$, we consider a diagonal noise covariance $\bSigma$ with $p=2000$ eigenvalues of the form $C \cdot t^\alpha$, where $t$ are equispaced between $1$ and $3$, and where $C$ is chosen so that $\tau = 1$, i.e.\ \eqref{eq:unit_tau} holds. For $\alpha = 5$, we compute $\sigmaBBP$, the asymptotically largest singular value of the noise matrix $\bSigma^{1/2} \bZ / \sqrt{n}$, using the numerical method introduced in \cite{leeb2021rapid}. We consider a rank $1$ $p$-by-$n$ signal matrix $\bX$ with i.i.d. singular vectors $\bu$ and $\bv$ and singular value
\begin{math}
\sigma = \sigmaBBP + 1.
\end{math}
%
For these parameters, the asymptotic errors for optimal shrinkage with exact whitening are then computed using the formula from \cite{leeb2021optimal}, while the asymptotic error for OptShrink is evaluated numerically using the method from \cite{leeb2021rapid}. For optimal shrinkage with pseudo-whitening, we consider a range of aspect ratios $\beta$ between $0$ and $0.95$, and evaluate the AMSE for each value of $\beta$. These values are plotted as functions of $\beta$ in Figure \ref{figure.errs2}, alongside the asymptotic errors of the other two methods. As the plots demonstrate, for each value of $\gamma$ and $\alpha$ the error of shrinkage with pseudo-whitening approaches that of shrinkage with exact whitening as $\beta \to 0$. Furthermore, for each value of $\gamma$, as $\alpha$ grows (that is, as the noise becomes more heteroscedastic) the performance of shrinkage with pseudo-whitening improves relative to OptShrink; for example, when $\gamma=2$ and $\alpha = 5$, shrinkage with pseudo-whitening outperforms OptShrink for the entire considered range of $\beta$.

\begin{figure}
\center
\includegraphics[scale=.5]{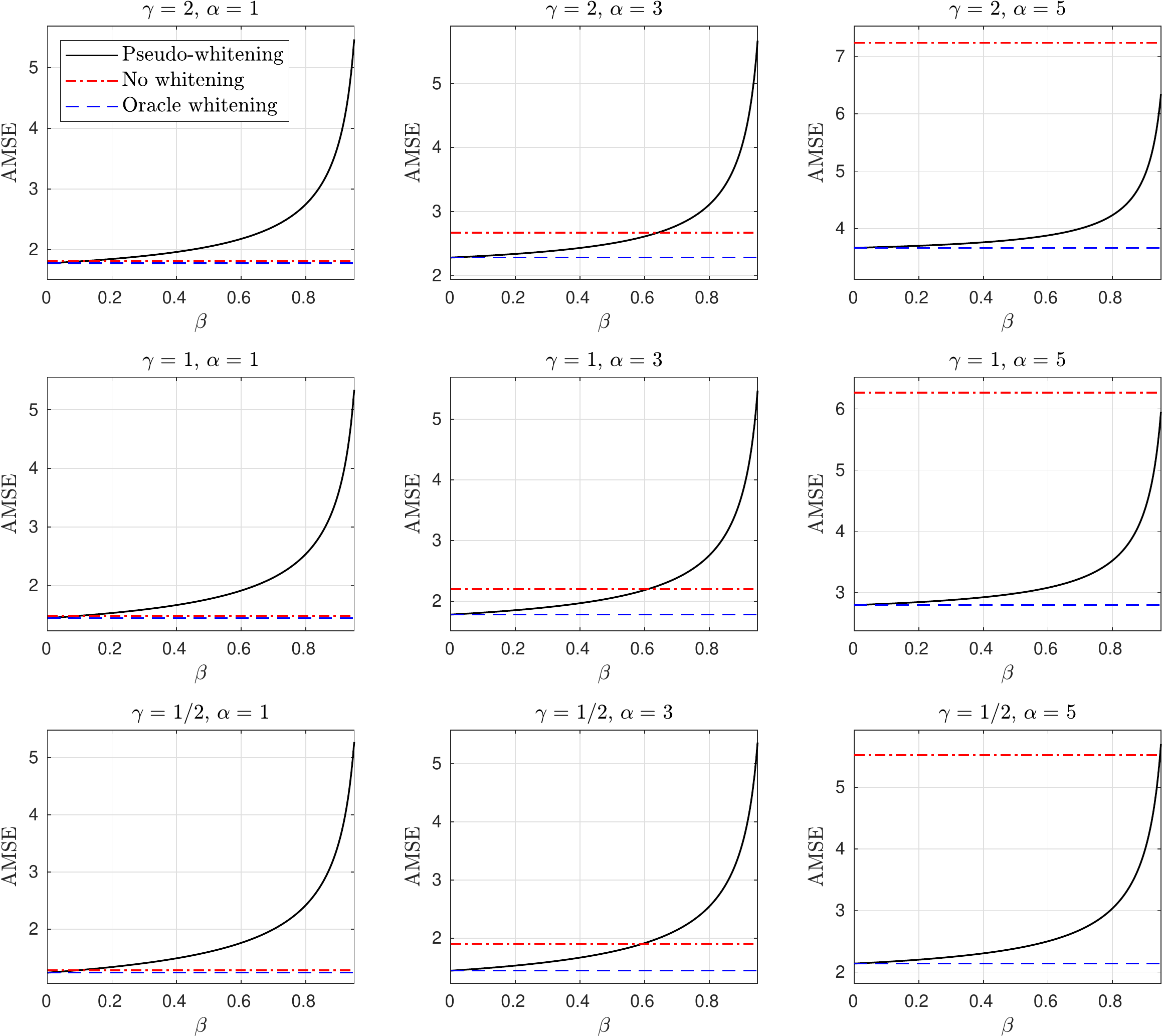}
\caption{Asymptotic errors for optimal shrinkage with pseudo-whitening as a function of $\beta \sim p/m$ for different values of $\gamma \sim p/n$ and parameter $\alpha$, with noise eigenvalues of the form $C \cdot t^{\alpha}$ with $t$ equispaced between $1$ and $3$. For reference, we also plot the asymptotic errors for the oracle WSC denoiser (from \cite{leeb2021optimal}) and optimal shrinkage without whitening (OptShrink, from \cite{nadakuditi2014optshrink}). The errors for OptShrink are numerically evaluated using the method from \cite{leeb2021rapid}. Our numerical results demonstrate that the performance gains offered by whitening become more pronounced as $\alpha$ increases, that is the eigenvalues of the noise covariance deviate further from a constant profile.
}
\label{figure.errs2}
\end{figure}

\subsection{Critical value of $\beta$}
\label{sec:critical_beta}

\begin{figure}
\centering
\begin{subfigure}{0.4\textwidth}
\includegraphics[scale=.3]{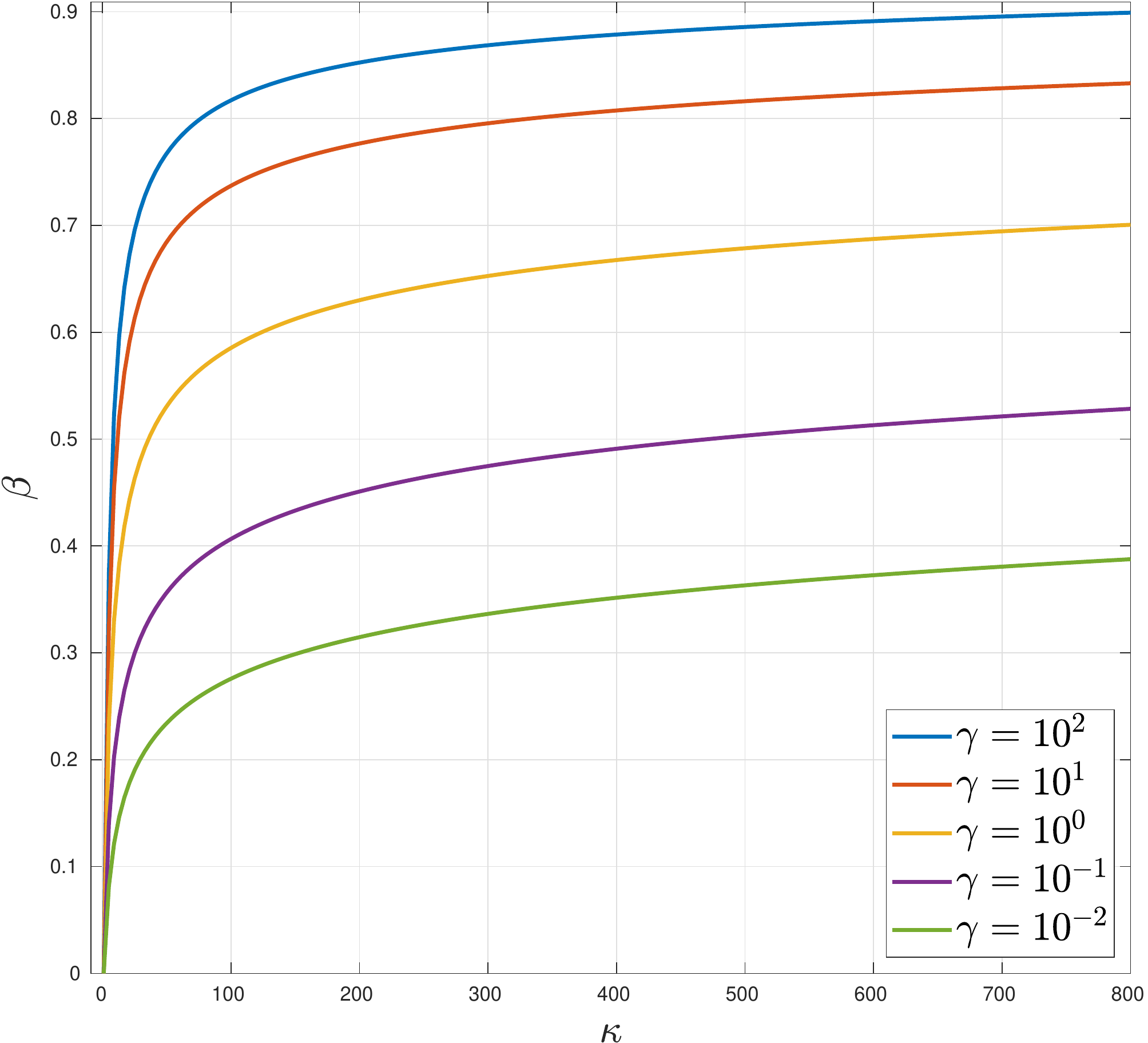}
\end{subfigure}
\hspace{.5in}
\begin{subfigure}{0.4\textwidth}
\includegraphics[scale=.3]{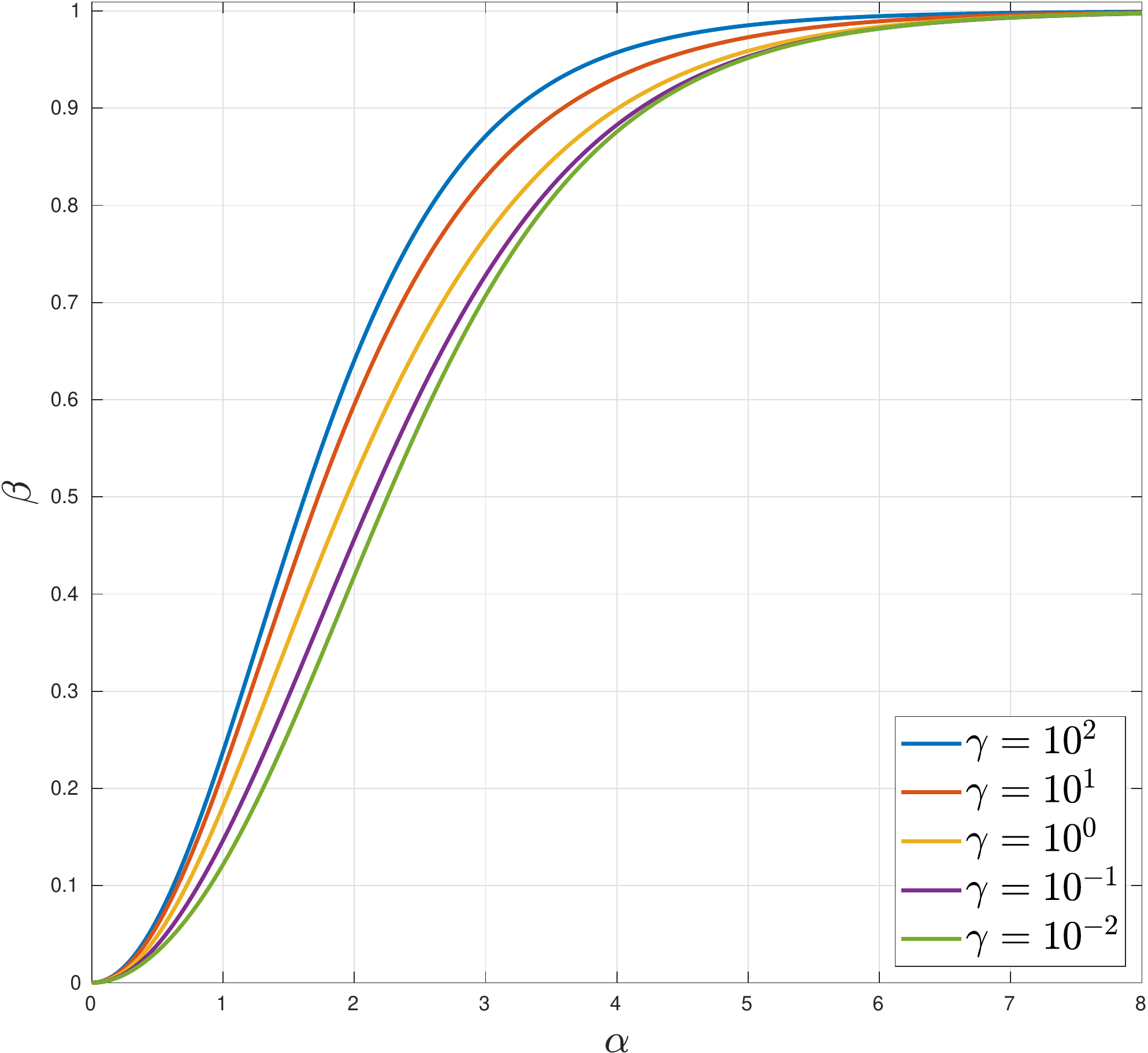}
\end{subfigure}
\caption{Value of $\beta$ for which the AMSE of OptShrink is equal to the AMSE of optimal singular value shrinkage with pseudo-whitening. Left: critical value of $\beta$ plotted as a function of $\kappa$, where the noise covariance has linearly spaced eigenvalues and condition number $\kappa$. Right: critical value of $\beta$ plotted as a function of $\alpha$, with noise eigenvalues of the form $C \cdot t^{\alpha}$ with $t$ equispaced between $1$ and $3$. It is observed that the critical value of $\beta$ grows as the heteroscedasticity parameters ($\kappa$ or $\alpha$) grow; that is, the more heteroscedastic the noise, the fewer noise-only samples are needed for optimal shrinkage with pseudo-whitening to outperform OptShrink.}
\label{figure.betas_errs}
\end{figure}

For another view of the experiments in Section \ref{sec:errors}, we evaluate the value of $\beta$ at which optimal shrinkage with pseudo-whitening outperforms OptShrink. When $\beta=0$ (i.e.\ when oracle whitening is used), pseudo-whitening is guaranteed to outperform OptShrink \cite{leeb2021optimal}; the performance of pseudo-whitening degrades as $\beta$ approaches $1$, and for some parameter values pseudo-whitening will perform worse than OptShrink when $\beta$ is sufficiently close to $1$. To illustrate this phenomenon, we consider the same two families of noise covariances described in Sections \ref{sec:lin_spaced} and \ref{sec:poly_decay}. Holding all other parameter values fixed, we consider the error of optimal shrinkage with pseudo-whitening as a function of $\beta$, and solve for the value of $\beta$ at which the pseudo-whitening error is equal to that of OptShrink when the signal strength is equal to $1.1 \sigmaBBP$. The value of $\sigmaBBP$ and the errors for OptShrink are evaluated using the method from \cite{leeb2021rapid}, as in Section \ref{sec:errors}; and the critical value of $\beta$ is numerically evaluated using the secant method.

The left panel of Figure \ref{figure.betas_errs} plots the critical value of $\beta$ for the Section \ref{sec:lin_spaced} model as a function of the condition number $\kappa$; the right panel plots the critical value of $\beta$ for the Section \ref{sec:poly_decay} model as a function of the decay parameter $\alpha$. These curves are shown for a range of values of $\gamma$. Both plots reveal similar qualititative behavior: the critical value of $\beta$ grows as the heteroscedasticity parameters ($\kappa$ or $\alpha$) grow; that is, the more heteroscedastic the noise, the fewer noise-only samples are needed for optimal shrinkage with pseudo-whitening to outperform OptShrink. Furthermore, the relative performance of optimal shrinkage with pseudo-whitening also increases with $\gamma$.

\subsection{Minimum detectable signal}
\label{sec:detection}

To further illustrate the differences in performance between optimal shrinkage with pseudo-whitening and OptShrink as a function of $\beta$, we consider the smallest singular value of $\bX$ that is detectable under each noise model. We denote by $\thetaBBP$ the smallest detectable singular value of $\bX$; for any specified variance profile, this may be numerically evaluated using the method from \cite{leeb2021rapid}, which solves the equations that implicitly characterize the value. We also consider the minimum detectable signal singular value by pseudo-whitening, given by the formula \eqref{eq:sigma-thresh}; this value obviously grows with $\beta$. Figures \ref{figure.bbp_conds} and \ref{figure.bbp_alphas} plot the minimum detectable values under OptShrink as functions of the heteroscedasticity of the noise, as measured by the parameter $\kappa$ for the model in Section \ref{sec:lin_spaced} and the parameter $\alpha$ for the model in Section \ref{sec:poly_decay}, respectively. These figures also display the values of the largest $\beta$ where the two methods have identical signal detection thresholds, again as functions of the heteroscedasticity of the noise covariances.

\begin{figure}
\centering
\begin{subfigure}{0.4\textwidth}
\includegraphics[scale=.3]{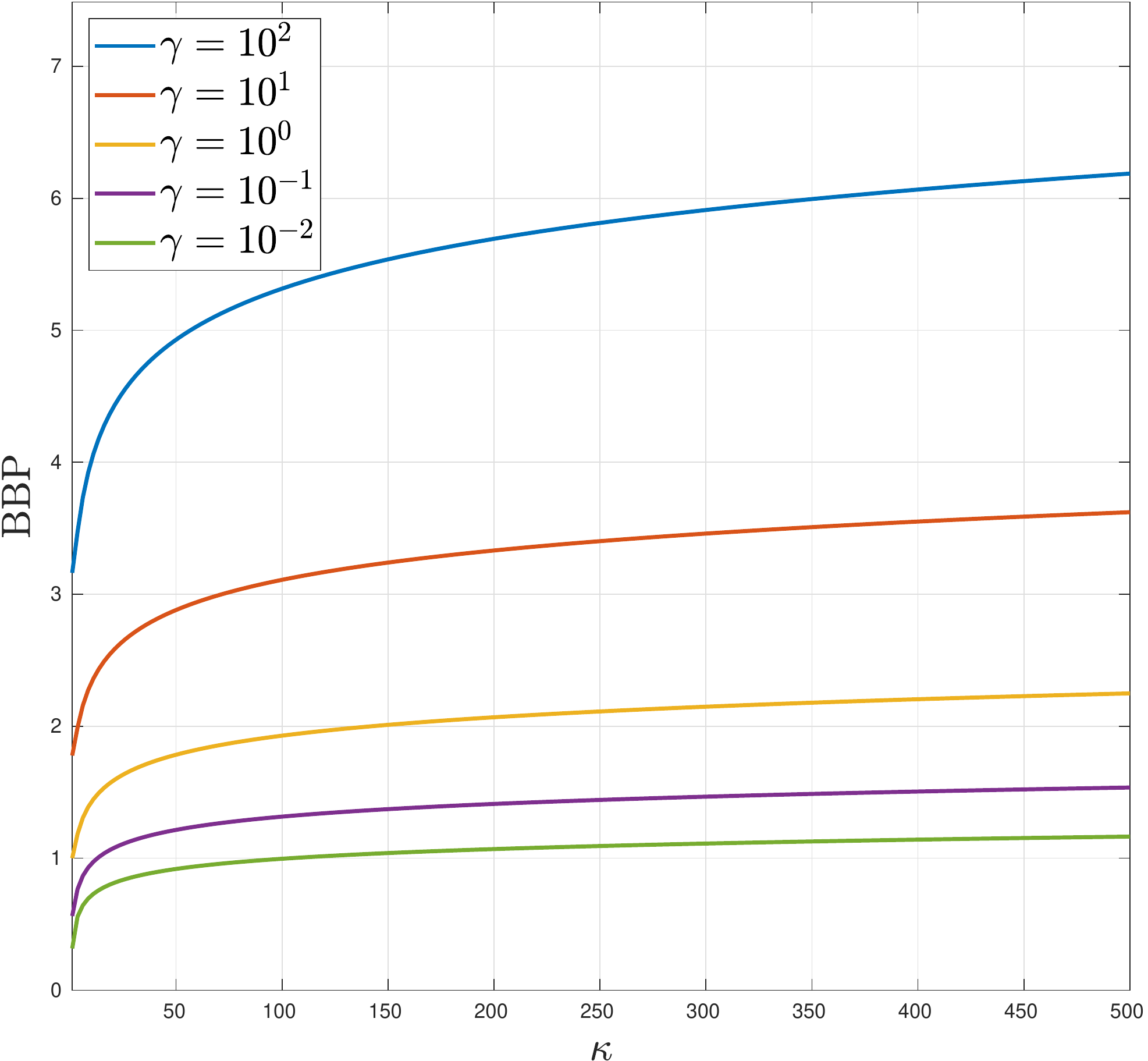}
\end{subfigure}
\hspace{.5in}
\begin{subfigure}{0.4\textwidth}
\includegraphics[scale=.3]{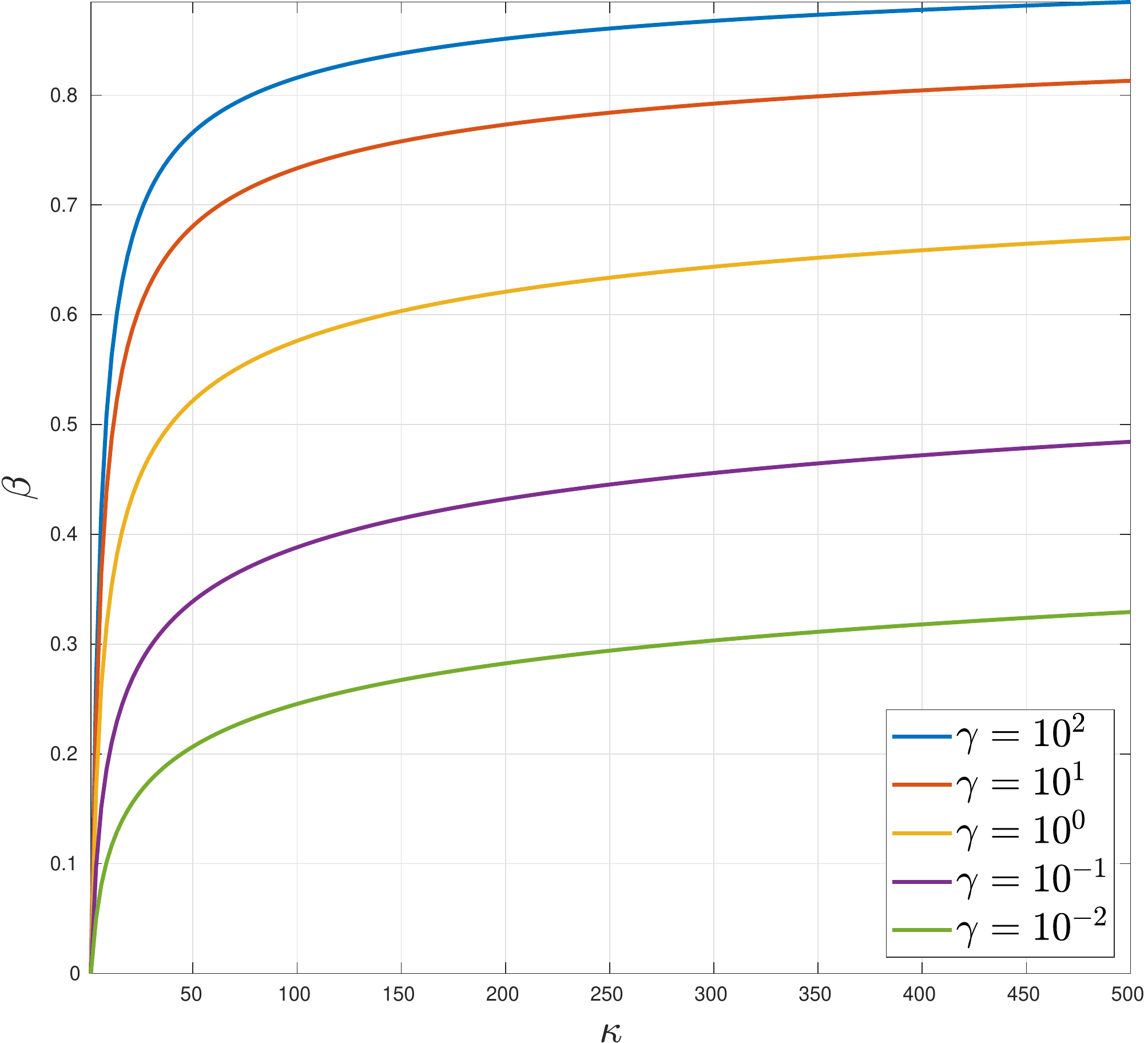}
\end{subfigure}
\caption{Left: the minimum detectable signal singular value by OptShrink. Right: value of $\beta$ for which the minimum detectable signal singular value for the pseudo-whitened matrix equals the minimum detectable value for OptShrink. Values are plotted as functions of $\kappa$, where the noise covariance has linearly spaced eigenvalues and condition number $\kappa$.}
\label{figure.bbp_conds}
\end{figure}

\begin{figure}
\centering
\begin{subfigure}{0.4\textwidth}
\includegraphics[scale=.3]{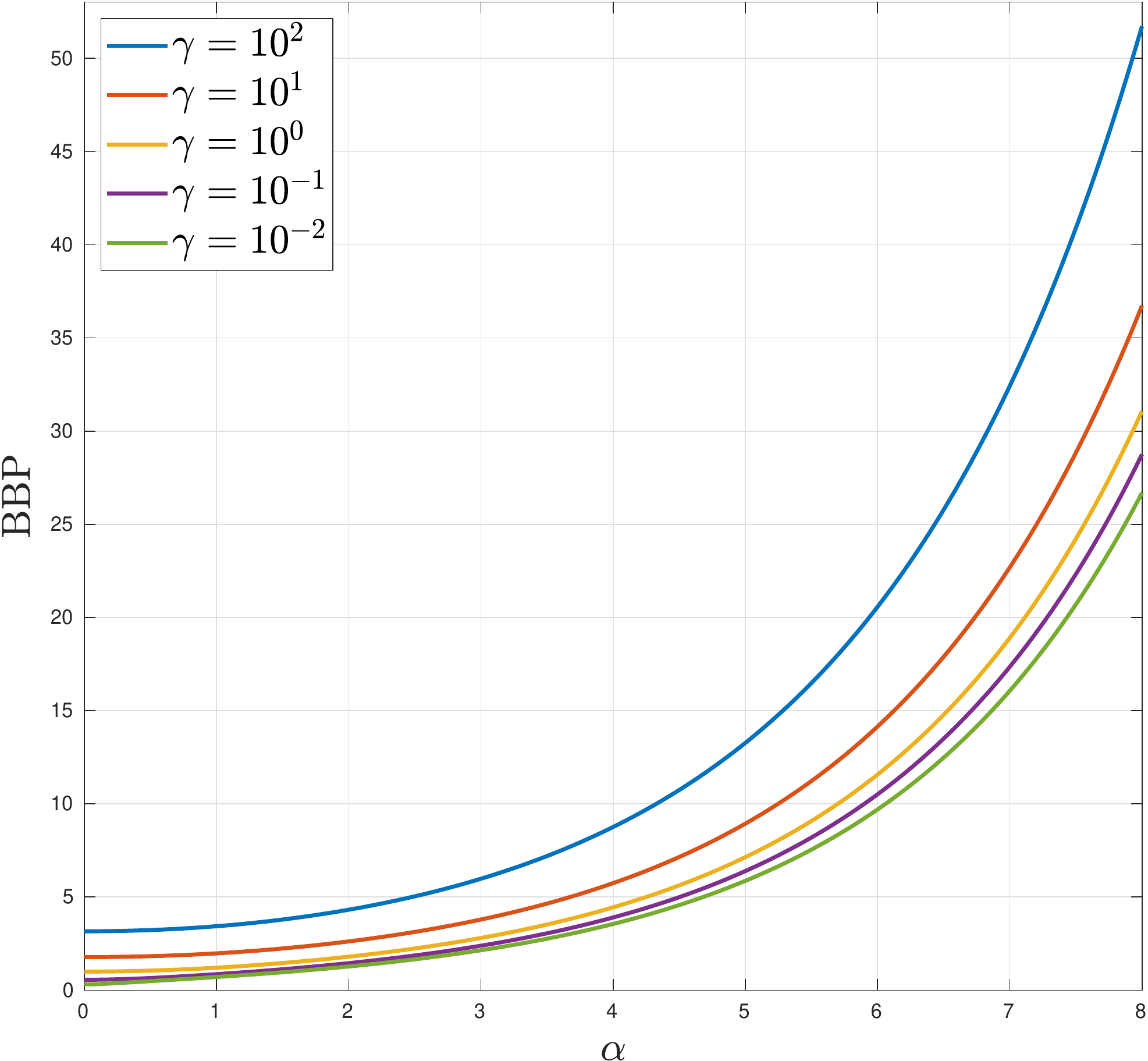}
\end{subfigure}
\hspace{.5in}
\begin{subfigure}{0.4\textwidth}
\includegraphics[scale=.3]{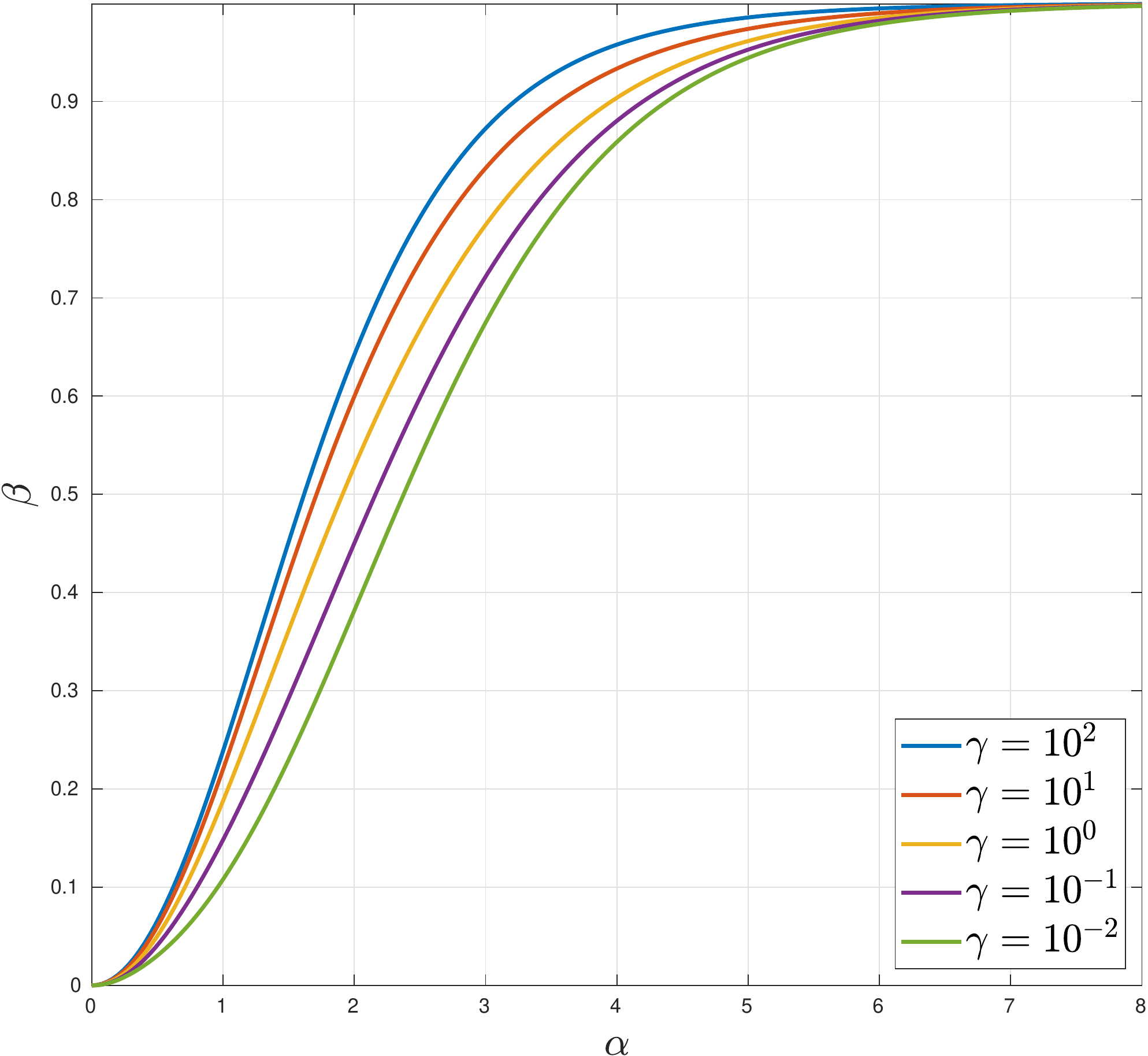}
\end{subfigure}
\caption{Left: the minimum detectable signal singular value by OptShrink. Right: value of $\beta$ for which the minimum detectable signal singular value for the pseudo-whitened matrix equals the minimum detectable value for OptShrink. Values are plotted as functions of $\alpha$, where the noise covariance eigenvalues are of the form $C \cdot t^{\alpha}$ with $t$ equispaced between $1$ and $3$.}
\label{figure.bbp_alphas}
\end{figure}

\subsection{Principal component estimation}
\label{sec:pc_estimation}

Next, we compare the methods of pseudo-whitening to OptShrink in estimating the principal components of the signal vector. We fix the parameter $\gamma=1/2$ and consider the model from Section \ref{sec:lin_spaced}, where the noise covariance has $p=2000$ linearly spaced spectrum and is normalized so that $\tau=1$. Figure \ref{figure.cosines} plots the asymptotic absolute inner products between the estimated PCs under pseudo-whitening, for several values of $\beta$. Also shown are the asymptotic absolute inner products between the true PCs and the estimated PCs used by OptShrink (the top left singular vector of the unnormalized data matrix). The signal singular value is taken to be $\max\{\sigmaThresh,\sigmaBBP\} + 1$, where $\sigmaThresh$ is evaluated for the largest value of $\beta$ considered ($\beta = .5$). Both $\sigmaBBP$ and the cosines for the unwhitened PCs are evaluated using the method from \cite{leeb2021optimal}.

As is apparent from the plot, larger values of $\beta$ result in smaller cosines; that is, when pseudo-whitening is performed with fewer noise-only samples, the resulting estimates are worse. Second, as the condition number grows -- that is, as the noise becomes more heteroscedastic -- the relative performance of pseudo-whitening improves relative to estimation without whitening. At a certain condition number, each pseudo-whitening curve begins to outperform the estimates without any whitening.

\begin{figure}
\centering
\includegraphics[scale=.35]{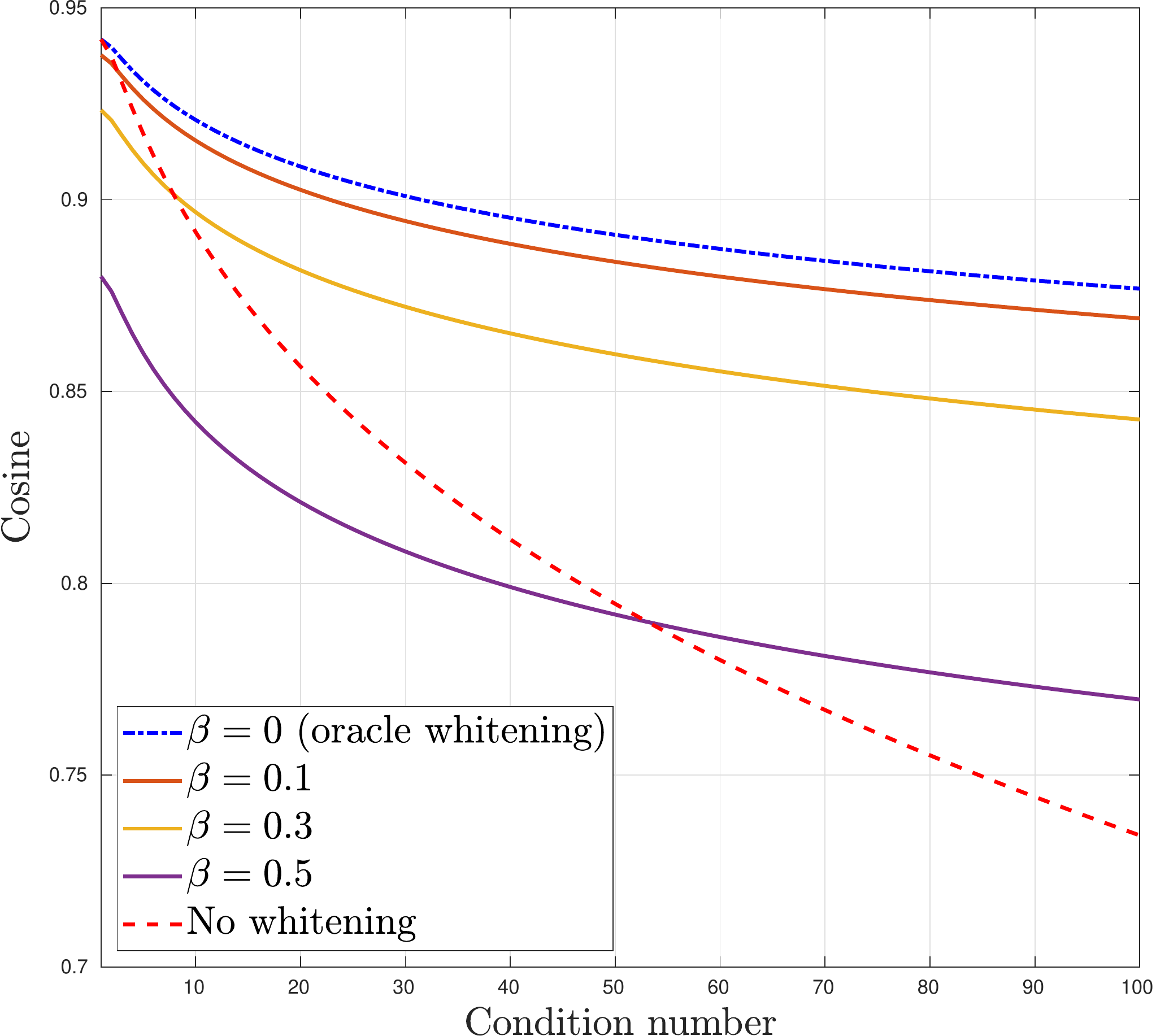}
\caption{The cosines of the angles between the true principal component and the estimated principal component, plotted as functions the condition number $\kappa$ of the noise covariance matrix $\bSigma$ with linearly spaced eigenvalues.}
\label{figure.cosines}
\end{figure}

\subsection{Comparison with other singular value shrinkers}
\label{sec:comparison}

\begin{figure}
\centering
\includegraphics[scale=.35]{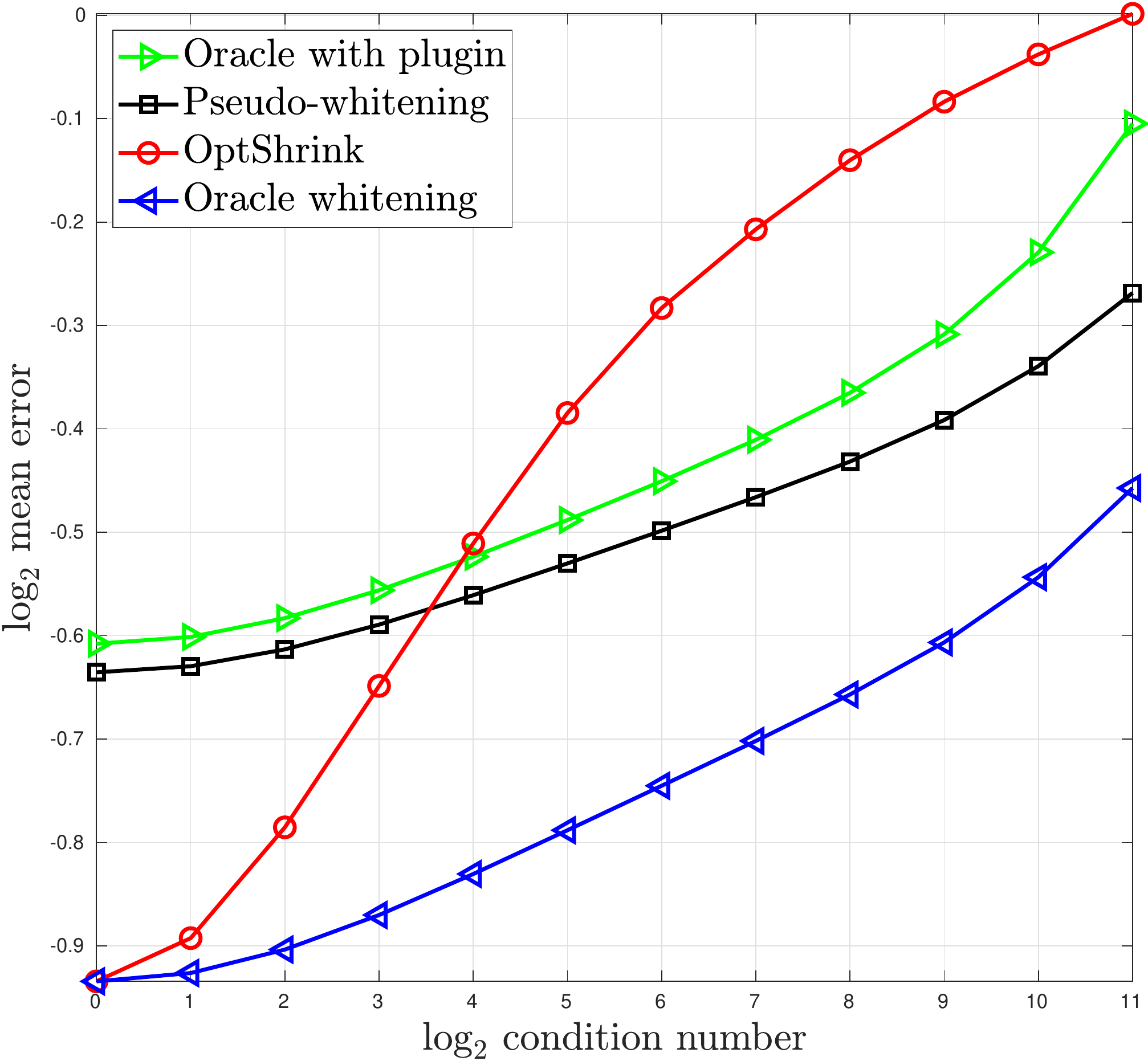}
\caption{
Plots of the mean relative error as a function of the condition number of the noise covariance matrix $\bSigma$ (plotted in log scale). Four shrinkage methods are considered: (i) Oracle with plugin: the whiten-shrink-recolor procedure of \cite{leeb2021matrix}, with a plugin estimate for the noise covariance matrix; (ii) Pseudo-whitening: the method proposed in this paper; (iii) OptShrink: the optimal singular value shrinker of \cite{nadakuditi2014optshrink}; (iv) Oracle whitening: the procedure if \cite{leeb2021matrix} using the true noise covariance matrix -- note that this is an oracle method, and not practically implementable under the setting considered in this paper. The experiment shows that, unsurprisingly, oracle whitening attains the smallest error, and outperforms optimal shrinkage (OptShrink) by an increasing margin as the noise covariance becomes more and more ill-conditioned. Pseudo-whitening using the number of available noise-only measurements ($m=1600$, the data dimensions being $n=1200,p=600$) is worse than OptShrink for small condition number, but attains markedly better performance when it is large. Furthermore pseudo-whitening, which is optimal over all Whiten-Shrink-reColor procedures, outperforms \cite{leeb2021matrix} with a plugin estimate of the noise covariance.
}
\label{figure.compare}
\end{figure}

We compare the shrinker described in Section \ref{sec.estimator} to three other shrinkage methods. The first method is OptShrink \cite{nadakuditi2014optshrink}, the optimal singular value shrinker of the data matrix $\bY$ itself without any transformation. OptShrink uses knowledge of the operator norm of the pure noise matrix $\bSigma^{1/2} \Zin$, which we evaluate numerically using the method from \cite{leeb2021rapid}. We also compare to optimal shrinkage with exact whitening from \cite{leeb2021optimal}. In the present context this is an oracle method, that assumes exact knowledge of the noise covariance $\bSigma$ (equivalently, it may be viewed it as instance of the algorithm from Section \ref{sec.estimator} with $\beta = 0$). Finally, we compare with the shrinker from \cite{leeb2021optimal} with the sample covariance $\what \bSigma$ used as a plug-in estimator; that is, this method uses pseudo-whitening, but treats it as if it were oracle whitening.



The details of the experiment are are follows. We set the problem size parameters $p=600$; $n=1200$; $m= 1800$; and $r=3$. We generate the covariance $\bSigma$ with diagonal entries equally spaced between $1$ and a specified condition number $\kappa \ge 1$. The signal singular values $\sigma_1$, $\sigma_2$, and $\sigma_3$ are defined as follows:
\begin{align}
\sigma_j = \sigmaThresh \cdot \frac{1+(r-j+1)/r}{\sqrt{\tau}}, \quad 1 \le j \le 3,
\end{align}
where $\tau$ is given by
\begin{align}
\tau = \frac{1}{p} \sum_{i=1}^{p} \frac{1}{\bSigma_{ii}}.
\end{align}
These value of $\sigma$ ensures that the signal components are asymptotically detectable, but very close to the edge of detectability. We generate the vectors $\bv_1$, $\bv_2$ and $\bv_3$ in $\RR^n$ and $\bu_1$, $\bu_2$ and $\bu_3$ in $\RR^p$ to be random orthonormal vectors (obtained by applying Gram-Schmidt to random vectors). We then define the $p \times n$ signal matrix $\bX$
\begin{align}
\bX = \sum_{k=1}^{3} \sigma_k \bu_k \bv_k^\T.
\end{align}
We generate the $p \times n$ in-sample noise matrix $\bSigma^{1/2}\Zin$, where $\Zin$ has i.i.d. entries with variance $1$ from a Gaussian distribution. We construct the $p \times n$ data matrix $\bY = \bX + \bSigma^{1/2}\Zin$. Finally, for each value of $\kappa$, we generate the $p \times m$ out-of-sample noise matrix $\bSigma^{1/2}\Zout$, where $\Zout$ has i.i.d. Gaussian entries of variance $1$. Each of the four methods is then applied to this input data, with the oracle rank $r=3$ supplied. For each value of $\kappa$, the data is generated $N = 200$ times, and the relative errors are averaged over all runs. The results of this experiment are shown Figure \ref{figure.compare}, which plots the $\log_2$ mean relative errors of each method against $\log_2(\kappa)$. Optimal shrinkage with oracle whitening outperforms each method, as is expected; the optimal shrinker with pseudo-whitening always outperforms the oracle shrinker with a plug-in covariance, and outperforms OptShrink at high condition numbers.

\subsection{Convergence for Gaussian and non-gaussian noise}
\label{sec:convergence_nongaussian}

\begin{table} 
\centering 
\begin{tabular}{|l| c  | c | c | c| c |}  
\hline  
 &  \multicolumn{5}{c|}{Mean error, inner cosines} \\
\hline  
 $p$ & Gaussian & Rademacher & t, df=10 & t, df=4.5 & t, df=3 \\
\hline  
 550 & 9.857e-03 & 9.920e-03 & 9.979e-03 & 1.991e-02 & 4.841e-01  \\
\hline  
1100 & 6.998e-03 & 7.043e-03 & 7.027e-03 & 1.470e-02 & 6.068e-01  \\
\hline  
2200 & 4.926e-03 & 4.933e-03 & 4.946e-03 & 1.073e-02 & 7.302e-01  \\
\hline  
4400 & 3.508e-03 & 3.491e-03 & 3.440e-03 & 8.147e-03 & 8.476e-01  \\
\hline  
\end{tabular}  
\caption{Mean error of $\hbv^\T \bv$.}
\label{table.inner}

\vspace{2\baselineskip}

\centering 
\begin{tabular}{|l| c  | c | c | c| c |}  
\hline  
 &  \multicolumn{5}{c|}{Mean error, outer cosines, whitened} \\
\hline  
 $p$ & Gaussian & Rademacher & t, df=10 & t, df=4.5 & t, df=3 \\
\hline  
 550 & 1.559e-02 & 1.600e-02 & 1.603e-02 & 2.604e-02 & 4.968e-01  \\
\hline  
1100 & 1.133e-02 & 1.131e-02 & 1.123e-02 & 1.907e-02 & 6.146e-01  \\
\hline  
2200 & 7.974e-03 & 8.020e-03 & 7.896e-03 & 1.366e-02 & 7.346e-01  \\
\hline  
4400 & 5.662e-03 & 5.534e-03 & 5.611e-03 & 1.026e-02 & 8.498e-01  \\
\hline  
\end{tabular}  
\caption{Mean error of $(\hbu^w)^\T \hbSigma^{1/2} \bu$.}
\label{table.outer.whitened}

\vspace{2\baselineskip}

\centering 
\begin{tabular}{|l| c  | c | c | c| c |}  
\hline  
 &  \multicolumn{5}{c|}{Mean error, outer cosines, unwhitened} \\
\hline  
 $p$ & Gaussian & Rademacher & t, df=10 & t, df=4.5 & t, df=3 \\
\hline  
 550 & 2.121e-02 & 2.130e-02 & 2.123e-02 & 3.085e-02 & 4.794e-01  \\
\hline  
1100 & 1.498e-02 & 1.500e-02 & 1.496e-02 & 2.259e-02 & 6.022e-01  \\
\hline  
2200 & 1.058e-02 & 1.071e-02 & 1.060e-02 & 1.614e-02 & 7.266e-01  \\
\hline  
4400 & 7.529e-03 & 7.417e-03 & 7.477e-03 & 1.211e-02 & 8.452e-01  \\
\hline  
\end{tabular}  
\caption{Mean error of $(\hbu^w)^\T \hbSigma^{-1/2} \bu$.}  
\label{table.outer.unwhitened}

\vspace{2\baselineskip}

\centering 
\begin{tabular}{|l| c  | c | c | c| c |}  
\hline  
 &  \multicolumn{5}{c|}{Mean error, singular values} \\
\hline  
 $p$ & Gaussian & Rademacher & t, df=10 & t, df=4.5 & t, df=3 \\
\hline  
 550 & 1.653e-02 & 1.673e-02 & 1.677e-02 & 1.944e-02 & 2.857e-01  \\
\hline  
1100 & 1.169e-02 & 1.188e-02 & 1.164e-02 & 1.352e-02 & 3.609e-01  \\
\hline  
2200 & 8.267e-03 & 8.343e-03 & 8.350e-03 & 9.464e-03 & 4.464e-01  \\
\hline  
4400 & 5.918e-03 & 5.907e-03 & 5.928e-03 & 6.811e-03 & 6.014e-01  \\
\hline  
\end{tabular}  
\caption{Mean error of $\yval$.}  
\label{table.singular.values}
\end{table}

We check the convergence rates of the observed cosines and singular values to their limiting values. The simulation was run as follows. We set the parameters $\gamma = 2/3$ and $\beta = 1/4$. For each fixed value of $p$, we take $n = p / \gamma$ and $m = p / \beta$. For each $p$, we generate the noise covariance $\bSigma$ with diagonal entries linearly spaced between $1$ and $50$. We generate the matrix $\bD = \bD_1$ as
\begin{align}
\bD = \sqrt{p} \cdot \diag(1/p^2,4/p^2,\dots,(p-1)^2/p^2,1).
\end{align}
We then generate the vector $\bu = \bu_1$ as
\begin{align}
\bu = \frac{\bD \bw}{\|\bD \bw\|},
\end{align}
where $\bw$ has entries that are i.i.d. Gaussian. We also set the value $\sigma = \sigma_1$ to be:
\begin{align}
\sigma = \sigmaThresh \cdot \frac{1.8}{\sqrt{\tau}},
\end{align}
where
\begin{align}
\tau = \frac{1}{p} \sum_{i=1}^{p} \frac{\bD_{1i}^2}{\bSigma_{ii}}.
\end{align}

This value of $\sigma$ ensures that the signal is detectable. We also generate the vector $\bv = \bv_1$ with i.i.d. Gaussian entries. With these parameters set, we define the $p \times n$ signal matrix $\bX = \sigma \bu \bv^\T$. We generate the $p \times n$ in-sample noise matrix $\bSigma^{1/2}\Zin$, where $\Zin$ has i.i.d. entries with variance $1$ from a specified distribution, either Gaussian, Rademacher, normalized $t_{10}$, normalized $t_{4.5}$, or normalized $t_3$ (where the $t$ distributions are normalized to have unit variance). We construct the $p \times n$ data matrix $\bY = \bX + \bSigma^{1/2}\Zin$. Finally, we generate the $p \times m$ out-of-sample noise matrix $\bSigma^{1/2}\Zout$, where $\Zout$  has i.i.d. entries from the same distribution as the entries of $\bSigma^{1/2}\Zin$.

With the data generated in this way, we compute the values $\hbv^\T \bv$, $(\hbu^w)^\T \hbSigma^{1/2} \bu$, $(\hbu^w)^\T \hbSigma^{-1/2} \bu$, and $\yval$. For each value of $p$, the experiment is run $N = 10,000$ times, and the errors are averaged as follows:
\begin{align}
\text{Error}_p(\xi) = \frac{1}{N} \sum_{i=1}^{N} \frac{|\widehat{\xi}_i - \xi|}{|\xi|},
\end{align}
where $\xi$ denotes the asymptotic value of the parameter in question and $\widehat\xi_1,\dots,\widehat\xi_N$ denote the $N$ realizations of the parameter. These average errors are presented in Tables \ref{table.inner} through \ref{table.singular.values}. Figure \ref{figure.error_rate} shows the errors are plotted in log scale.

\begin{figure}
\centering
\begin{subfigure}{0.4\textwidth}
\includegraphics[scale=.3]{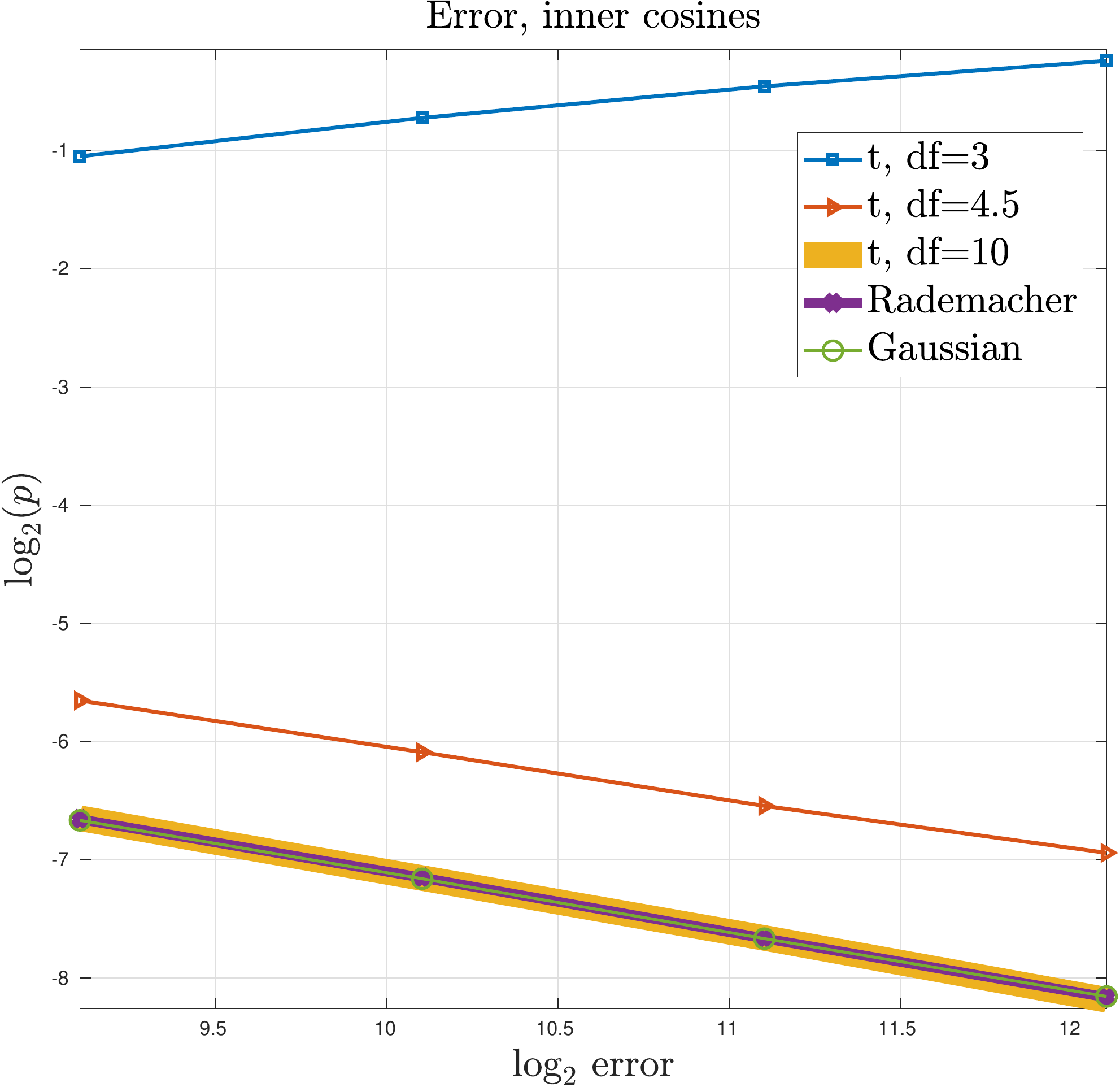}
\end{subfigure}
\hspace{.5in}
\begin{subfigure}{0.4\textwidth}
\includegraphics[scale=.3]{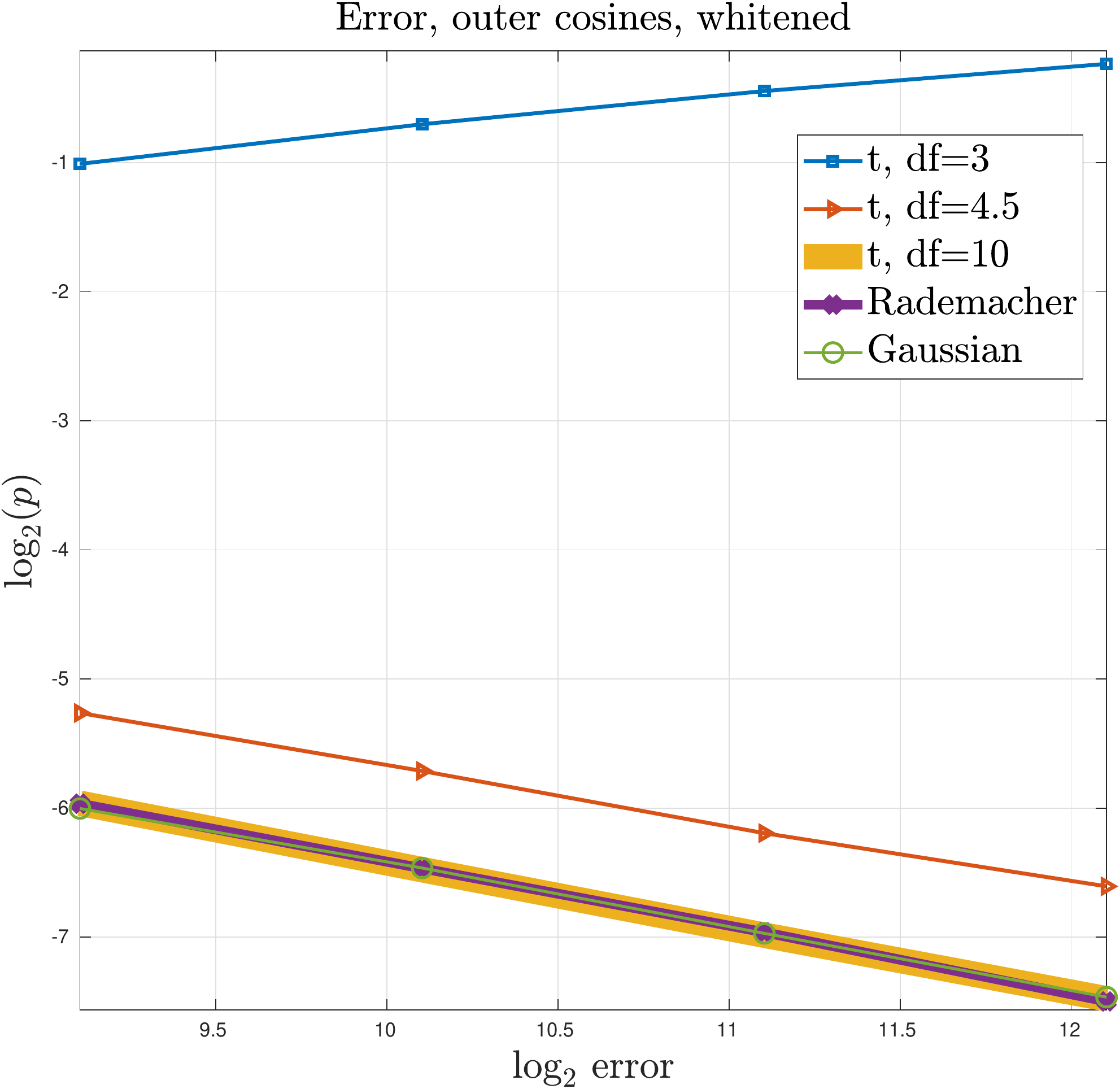}
\end{subfigure}
\hfill
\begin{subfigure}{0.4\textwidth}
\includegraphics[scale=.3]{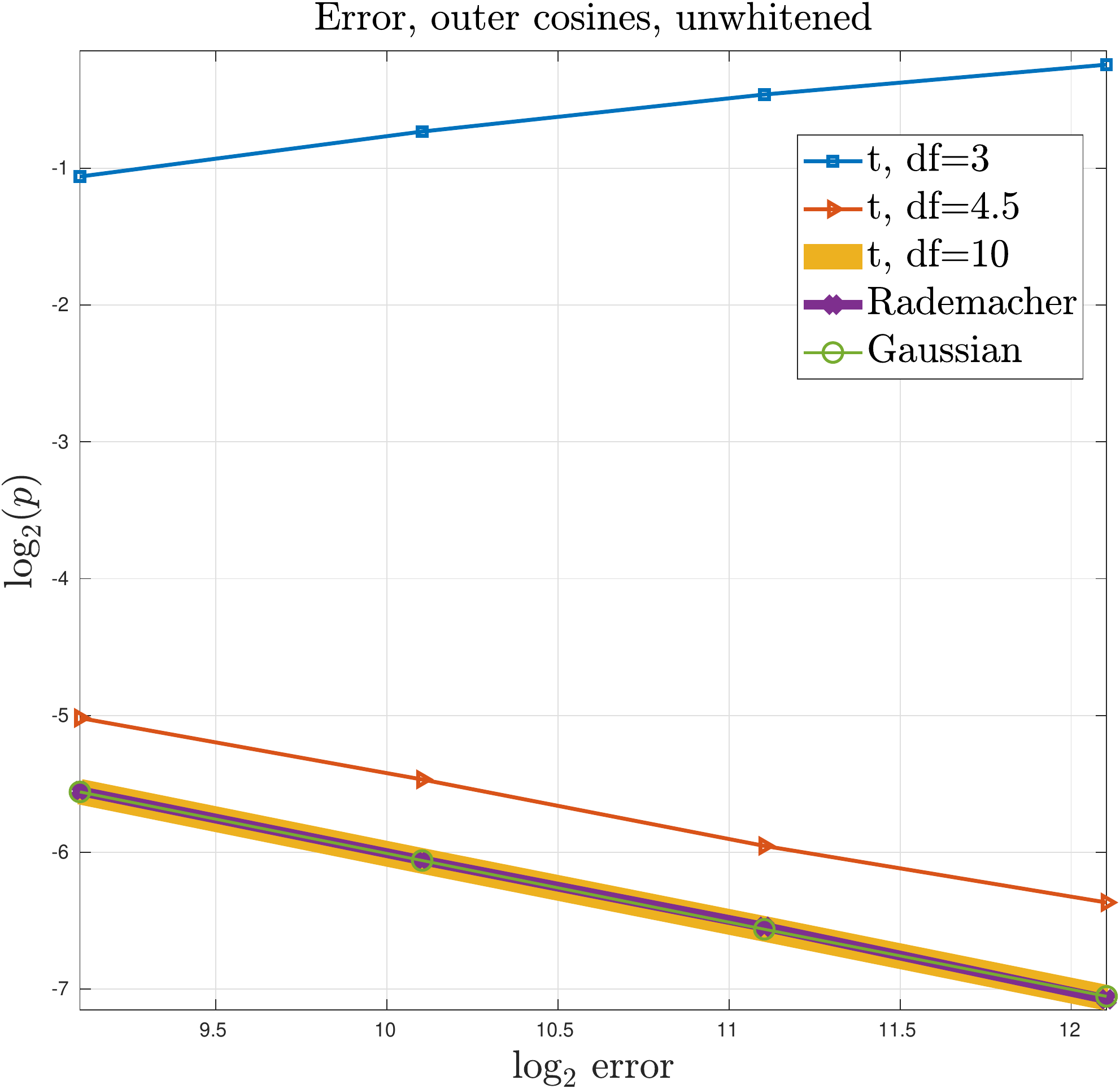}
\end{subfigure}
\hspace{.5in}
\begin{subfigure}{0.4\textwidth}
\includegraphics[scale=.3]{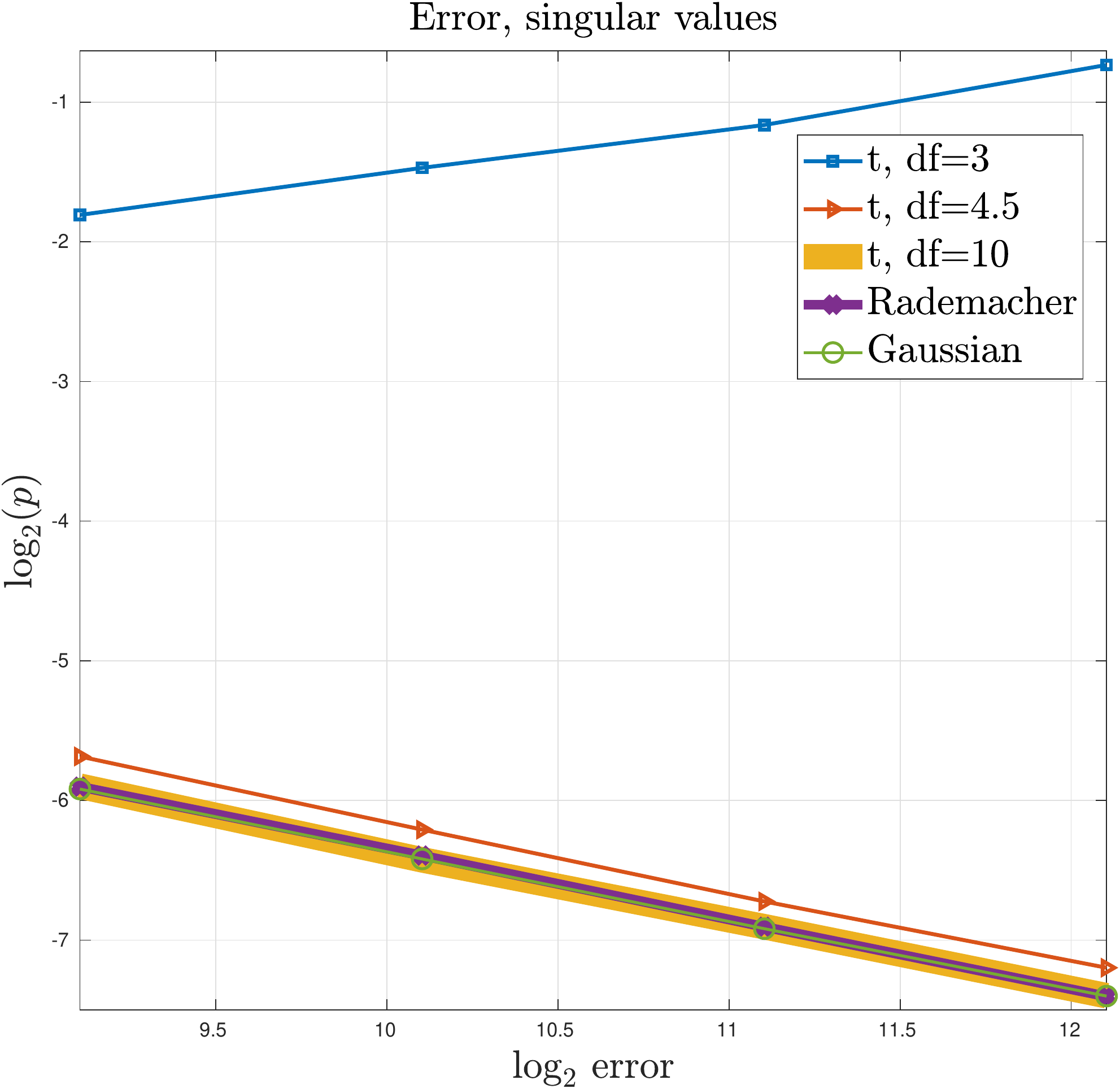}
\end{subfigure}
\caption{The $\log_2$ of the average relative errors between the spiked model parameters and their asymptotic values, plotted against $\log_2(p)$, for five different noise types. The errors for the Gaussian, Rademacher, and $t_{10}$ noise are nearly indistinguishable, and appears to decay at a rate of approximately $O(p^{-1/2})$, since their curves have slopes close to $1/2$; the errors for $t_{4.5}$ noise are larger, but still appear to decay at approximately the same rate. By contrast, the errors for the heavy-tailed $t_3$ noise diverge.}
\label{figure.error_rate}
\end{figure}

For noise with Gaussian, Rademacher, and $t_{10}$ distributions the errors between the observed values and the true values decays at approximately the rate $O(p^{-1/2})$; furthermore, the errors themselves are nearly identical regardless of the noise distribution. By contrast, the errors for the $t_{4.5}$ distribution are larger, though they still shrink at a similar rate; whereas the errors for the fat-tailed $t_3$ distribution grow with $p$.

\section{Discussion and conclusion}
\label{sec:conclusion}

This paper considered the problem of low-rank matrix denoising under additive noise, assuming that the columns of the noise matrix are i.i.d. but have an otherwise arbitrary and unknown inter-row covariance structure. Under our setup, one has access to side information in the form of pure-noise samples, and wishes to make use of this additional information in denoising. Crucially, the number of available pure-noise samples is such that one {cannot} consistently estimate the noise covariance matrix $\bSigma$; thus, one can think of $\bSigma$ as being only partially known.

We proposed to tackle this problem by means of singular value shrinkage, under a Whiten-Shrink- re-Color framework. To wit, one forms the pure-noise sample covariance $\hbSigma$, and 1) uses $\hbSigma$ to  pseudo-whiten the observed signal-plus-noise matrix; 2) applies optimal singular value shrinkage to the resulting pseudo-whitened matrix; and finally 3) performs a re-coloring step. Our main contribution is the derivation of the optimal singular shrinker to be used in this compound procedure. To this end, we proved new results on the spectrum of the spiked F-matrix ensemble, which may be of independent interest. 

As one would expect, we demonstrated that our optimally-tuned WSC denoiser outperforms OptShrink, the optimally-tuned singular value shrinkage without noise whitening \cite{nadakuditi2014optshrink}), provided that the number of pure-noise samples is sufficiently large (see Section~\ref{sec:numerical}).
A particularly appealing quality of the proposed estimator is its simplicity, both conceptually and implementation-wise. Indeed, the idea of noise whitening is classical within signal processing. Furthermore, the method comes with precise performance guarantees, including estimates of the AMSE and the inner products between the signal PCs and their estimates, which can be useful in practice. There is no reason to believe, however, that this approach is optimal among all denoising procedures. Devising new methods to make better use of the available side information (and/or deriving tight lower bounds on the attainable MSE) is an interesting direction for future research.

A natural extension of our method would be to replace the noise sample covariance $\hbSigma$ with a better covariance estimator: note that while the sample covariance has generically an optimal  \emph{estimation rate}, one can often construct estimators that attain a smaller estimation error (e.g., smaller constant prefactors); see for example the shrinkage estimator of \cite{ledoit2012nonlinear}. To implement such modified WSC scheme, one needs to calculate precise analytic formulas for the singular values and singular vector angles of the whitened and recolored data matrix. For more sophisticated covariance estimators (such as \cite{ledoit2012nonlinear}), this calculation appears to be challenging.

    \section*{Acknowledgements}

    ER was partially supported by a Hebrew University of Jerusalem Einstein-Kaye scholarship.
MG was partially supported by ISF grant no. 871/22.
WL acknowledges support from NSF BIGDATA award IIS-1837992, BSF award 2018230, and NSF CAREER award DMS-2238821.

ER wishes to thank David Donoho, Apratim Dey and Michael Feldman for interesting discussions regarding this manuscript. 

    \bibliographystyle{plain}
    \bibliography{ref}

    \appendix

    \section{Auxiliary Technical Results}

\subsection{Elementary concentration lemmas}

The following elementary concentration results are used throughout the paper:

\begin{lemma}
    Let $\bA_p\in \RR^{p\times p}$ be a sequence of bounded matrices, $\ba\in \RR^p$ a bounded vector,  and $\bg\sim \mathcal{N}(\bm{0},p^{-1}\Id_{p\times p})$. Then
    \[
    \|\bg\|^2\asympeq 1,\quad \bg^\T \bA_p \bg \asympeq p^{-1}\tr(\bA_p),\quad \bg^\T \ba \asympeq 0 \,.  
    \]
\end{lemma}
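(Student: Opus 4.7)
My plan is to treat each of the three claims as a standard application of Gaussian concentration, combined with Borel--Cantelli to upgrade convergence in probability into almost sure convergence. Throughout, write $\bg = p^{-1/2}\bz$ with $\bz\sim \m{N}(\0,\Id_{p\times p})$, so that the $z_i$ are i.i.d.\ standard Gaussians.

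For the first claim, note that $\|\bg\|^2 = p^{-1}\sum_{i=1}^p z_i^2$ is the empirical mean of i.i.d.\ random variables with mean $1$. Kolmogorov's strong law of large numbers immediately gives $\|\bg\|^2 \to 1$ a.s. (One can alternatively invoke the standard chi-square tail bound $\Prob(|\|\bg\|^2-1|>t)\le 2\exp(-cpt^2)$ for $t$ bounded, which is summable in $p$ for any fixed $t>0$, and apply Borel--Cantelli.)

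For the second claim, write $\bg^\T \bA_p \bg = p^{-1}\bz^\T \bA_p \bz$ and observe that $\EE[\bz^\T \bA_p \bz] = \tr(\bA_p)$, so $\EE[\bg^\T \bA_p\bg] = p^{-1}\tr(\bA_p)$. The Hanson--Wright inequality (see e.g.\ Rudelson--Vershynin) yields, for every $t>0$,
\[
\Prob\!\left(\,\left|\bg^\T \bA_p\bg - p^{-1}\tr(\bA_p)\right|>t\,\right)\le 2\exp\!\left(-c\min\!\left(\frac{p^2 t^2}{\|\bA_p\|_F^2},\frac{pt}{\|\bA_p\|_{\op}}\right)\right).
\]
Since $\|\bA_p\|_{\op}\le C$ uniformly and consequently $\|\bA_p\|_F^2 \le p\|\bA_p\|_{\op}^2 \le C^2 p$, the right-hand side is bounded by $2\exp(-c'pt^2)$ for fixed (small) $t$, hence summable. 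Borel--Cantelli then gives a.s.\ convergence.

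For the third claim, since $\bg$ is Gaussian, $\bg^\T \ba\sim \m{N}(0,p^{-1}\|\ba\|^2)$, which has variance $O(1/p)$ by the boundedness of $\|\ba\|$. The standard Gaussian tail $\Prob(|\bg^\T \ba|>t)\le 2\exp(-pt^2/(2\|\ba\|^2))$ is again summable for any fixed $t>0$, so Borel--Cantelli delivers $\bg^\T \ba\to 0$ a.s. Nothing in the argument is genuinely hard; the only point to be careful about is that each inequality must be summable in $p$ so that the ``almost surely'' rather than ``in probability'' conclusion goes through, which is why one wants the exponential (rather than merely polynomial) concentration provided by Hanson--Wright and Gaussian tail bounds.
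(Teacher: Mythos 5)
Your proof is correct. The paper states this lemma without proof (it appears in the appendix as one of several "elementary concentration lemmas"), and your use of Gaussian tail bounds, Hanson--Wright, and Borel--Cantelli is exactly the style of argument the paper itself invokes for similar concentration claims (e.g., in the proof of Lemma~\ref{lem:M-conv}), so this matches the intended reasoning.
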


\begin{lemma}
    \label{lem:quadratic-square}
    Let $\bA_{p} \in \RR^{p\times p}$ be a sequence of bounded, orthogonally invariant, random matrices, namely, for any $\bO \in O(p)$, $\bO\bA_p\bO^\T \overset{d}{=}\bA_p$. Let $\ba_p,\bb_p\in\RR^n$ be two sequences of bounded vectors.
    Then
\begin{align*}
    \ba_p^\T \bA_p \bb_p \asympeq \left(\ba_p^\T \bb_p\right) \cdot p^{-1}\trace(\bA_p) \,.
\end{align*}
\end{lemma}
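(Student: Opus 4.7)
The plan is to exploit the orthogonal invariance of $\bA_p$ together with a concentration-of-measure argument on $O(p)$. Since $\bO\bA_p\bO^\T \overset{d}{=} \bA_p$ for every deterministic $\bO \in O(p)$, the same distributional equality holds when $\bO$ is drawn from Haar measure on $O(p)$ independently of everything else. Consequently (assuming, as is implicit in the applications of this lemma, that $\ba_p,\bb_p$ are either deterministic or independent of $\bA_p$), it suffices to establish
$$Q_p := \ba_p^\T \bO \bA_p \bO^\T \bb_p \asympeq (\ba_p^\T\bb_p)\cdot p^{-1}\tr(\bA_p),$$
where $\bO\sim\mathrm{Haar}(O(p))$ is independent of $(\bA_p,\ba_p,\bb_p)$.

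Next, I would compute the conditional expectation $\EE[Q_p \mid \bA_p, \ba_p, \bb_p]$ using the elementary identity $\EE[O_{ik}O_{jl}] = p^{-1}\delta_{ij}\delta_{kl}$ (a consequence of row-orthonormality of $\bO$ together with permutation/sign symmetry of the Haar measure). This identity gives $\EE[\bO\bM\bO^\T] = p^{-1}\tr(\bM)\,\Id$ for every fixed matrix $\bM$, and in particular
$$\EE[Q_p \mid \bA_p,\ba_p,\bb_p] = p^{-1}\tr(\bA_p)\cdot (\ba_p^\T\bb_p),$$
which is precisely the target quantity.

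Finally, I would establish concentration of $Q_p$ around this conditional mean. Writing $Q_p = \tr\bigl(\bO\bA_p\bO^\T \bb_p\ba_p^\T\bigr)$, the map $\bO\mapsto Q_p$ is Lipschitz on $O(p)$ (equipped with the Frobenius norm) with Lipschitz constant bounded by $2\|\ba_p\|\|\bb_p\|\|\bA_p\|$, which by the boundedness hypothesis is a.s. bounded by some constant $C$. The Gromov--Milman concentration inequality for Haar measure on $O(p)$ (see e.g.\ Meckes, \emph{The Random Matrix Theory of the Classical Compact Groups}, Chapter~5) then yields, for each $t>0$,
$$\Prob\bigl(|Q_p - \EE[Q_p\mid \bA_p,\ba_p,\bb_p]| > t \bigm| \bA_p,\ba_p,\bb_p\bigr) \le 2\exp(-c p t^2 / C^2),$$
for an absolute constant $c>0$. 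Taking $t = p^{-1/3}$ (say) and applying Borel--Cantelli delivers the almost-sure convergence $Q_p - \EE[Q_p \mid \bA_p,\ba_p,\bb_p] \to 0$.

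The only mildly non-trivial ingredient is the concentration inequality on the orthogonal group; everything else is algebra. Because $\bA_p$ need not be symmetric, one cannot diagonalize it, but this causes no difficulty since the Haar moment identity applies to general $\bM$. Note that if one is uncomfortable citing a Lipschitz concentration result, an alternative and equally standard route is to compute the conditional variance $\mathrm{Var}(Q_p \mid \bA_p,\ba_p,\bb_p)$ directly via the second-order Weingarten formula on $O(p)$, showing it is $O(p^{-1})$, and then apply the Chebyshev--Borel--Cantelli truncation argument (along a polynomial subsequence); the bookkeeping is slightly more tedious but avoids all abstract concentration machinery.
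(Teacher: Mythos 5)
The paper states Lemma~\ref{lem:quadratic-square} as an ``elementary concentration lemma'' and provides no proof at all, so there is no argument in the text to compare yours against. That said, your proposed proof is correct and self-contained. The three ingredients are right: (i) replacing $\bA_p$ by $\bO\bA_p\bO^\T$ with $\bO$ Haar-distributed and independent is justified by the invariance hypothesis; (ii) the Haar moment identity $\EE[O_{ik}O_{jl}] = p^{-1}\delta_{ij}\delta_{kl}$ does give $\EE[\bO\bM\bO^\T] = p^{-1}\tr(\bM)\,\Id$; and (iii) the Lipschitz bound $2\|\ba_p\|\|\bb_p\|\|\bA_p\|$ for $\bO \mapsto \ba_p^\T\bO\bA_p\bO^\T\bb_p$ on $(O(p),\|\cdot\|_F)$ lets you invoke Gromov--Milman and then Borel--Cantelli.

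One step that deserves an explicit word more than you give it is the transfer from $Q_p := \ba_p^\T\bO\bA_p\bO^\T\bb_p \asympeq p^{-1}\tr(\bA_p)\,\ba_p^\T\bb_p$ back to the statement about $\ba_p^\T\bA_p\bb_p$. Almost-sure convergence does not in general carry over between sequences that merely agree in law termwise. What does carry over is the one-$p$-at-a-time tail probability: since $\tr(\bO\bA_p\bO^\T)=\tr(\bA_p)$, the pair $(Q_p, p^{-1}\tr(\bA_p)\,\ba_p^\T\bb_p)$ is a measurable function of $\bO\bA_p\bO^\T$, which equals $\bA_p$ in distribution, so
\begin{align*}
\Prob\!\left(\bigl|\ba_p^\T\bA_p\bb_p - p^{-1}\tr(\bA_p)\,\ba_p^\T\bb_p\bigr| > t\right) = \Prob\!\left(\bigl|Q_p - p^{-1}\tr(\bA_p)\,\ba_p^\T\bb_p\bigr| > t\right) \le 2e^{-cpt^2/C^2}\,,
\end{align*}
and it is this tail bound, not the a.s.\ convergence of $Q_p$ itself, that you should feed into Borel--Cantelli. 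The fix is purely expository and the conclusion is the one you reach, so this is a presentation issue rather than a gap.
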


\begin{lemma}
    \label{lem:quadratic-rectangle}
    Let $A_p \in \RR^{p\times n}$ be a sequence of left orthogonally invariant random matrices, namely, for any $\bO\in O(p)$, $\bO\bA_p \overset{d}{=}\bA_p$. Let $\ba_p\in \RR^{p},\bb_p\in\RR^n$ be bounded. Then $\ba_p^\T \bA \bb_p \asympeq 0 $. 
\end{lemma}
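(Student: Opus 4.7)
\textbf{Proof plan for Lemma \ref{lem:quadratic-rectangle}.} The key idea is to exploit left orthogonal invariance to effectively randomize the direction of $\ba_p$ in $\RR^p$, at which point the quadratic form concentrates around zero by standard spherical concentration. Throughout, I implicitly assume (as in Lemma \ref{lem:quadratic-square} and as holds in all applications of interest in this paper) that $\|\bA_p\|_{\op}$ is almost surely bounded by a deterministic constant, which is needed to control moments.

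First I would introduce an auxiliary ``rerandomized'' copy of the random variable of interest. Let $\bO_p \sim \mathrm{Haar}(O(p))$ be drawn independently of $\bA_p$, and define
\begin{equation*}
    \tilde X_p := \ba_p^\T \bO_p \bA_p \bb_p = (\bO_p^\T \ba_p)^\T (\bA_p \bb_p).
\end{equation*}
By the left orthogonal invariance of $\bA_p$ and the independence of $\bO_p$, we have $\bO_p \bA_p \stackrel{d}{=} \bA_p$, hence $\tilde X_p \stackrel{d}{=} X_p := \ba_p^\T \bA_p \bb_p$. So it suffices to show $\tilde X_p \to 0$ almost surely (well, we need $X_p \to 0$ a.s., but since distributional equality gives equality of moments, we can still conclude a.s.\ convergence of $X_p$ via a Borel--Cantelli argument on $X_p$ directly --- see below).

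Next I would bound the moments. Conditional on $\bA_p$, the vector $\bO_p^\T \ba_p$ is uniformly distributed on the sphere of radius $\|\ba_p\|$ in $\RR^p$. Writing $\bc_p := \bA_p \bb_p$, standard computations on the uniform distribution on the sphere give, for every integer $k \ge 1$,
\begin{equation*}
    \EE\bigl[\tilde X_p^{2k} \,\big|\, \bA_p\bigr] \le \frac{C_k \|\ba_p\|^{2k} \|\bc_p\|^{2k}}{p^k} \le \frac{C_k \|\ba_p\|^{2k} \|\bA_p\|_{\op}^{2k} \|\bb_p\|^{2k}}{p^k},
\end{equation*}
for some absolute constant $C_k$ (this follows, e.g., from the fact that a uniform vector on $S^{p-1}$ is sub-Gaussian with parameter $O(1/p)$). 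Under our boundedness assumptions, this yields $\EE[\tilde X_p^{2k}] \le C_k'/p^k$, and by $X_p \stackrel{d}{=} \tilde X_p$, the same bound holds for $\EE[X_p^{2k}]$.

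Finally, I would conclude via Borel--Cantelli. For any $\eps>0$, Markov's inequality with $k=2$ gives $\Prob(|X_p| > \eps) \le C_2'/(p^2 \eps^4)$, which is summable in $p$. Hence $\{|X_p|>\eps\}$ occurs only finitely often almost surely, so $\limsup_p |X_p| \le \eps$ a.s. Taking $\eps$ through a countable sequence tending to $0$ yields $X_p \to 0$ a.s., i.e.\ $\ba_p^\T \bA_p \bb_p \asympeq 0$, as required. The main (and really the only) subtle point is passing from the distributional identity $X_p \stackrel{d}{=} \tilde X_p$ to an a.s.\ statement about $X_p$; this is handled cleanly by working with summable moment bounds rather than an a.s.\ coupling.
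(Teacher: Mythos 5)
The paper states this lemma (together with Lemma~\ref{lem:quadratic-square}) in Appendix~A.1 as an ``elementary concentration lemma'' and gives \emph{no} proof, so there is nothing to compare your argument against directly. Your proof is correct. The rerandomization device ($\bO_p\sim\mathrm{Haar}(O(p))$ independent of $\bA_p$, so that $\tilde X_p:=\ba_p^\T\bO_p\bA_p\bb_p\stackrel{d}{=}X_p$ and conditionally on $\bA_p$ the vector $\bO_p^\T\ba_p$ is uniform on a sphere), the $O(p^{-k})$ bound on the $2k$-th moment of a spherical inner product, and the passage from equidistribution to an a.s.\ statement via $k=2$ moments, Markov, and Borel--Cantelli are all standard and sound; you are also right that no a.s.\ coupling between $X_p$ and $\tilde X_p$ is needed since only marginal moment bounds feed Borel--Cantelli. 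One point worth flagging: the lemma as stated in the paper omits a boundedness hypothesis on $\bA_p$ (Lemma~\ref{lem:quadratic-square} does include ``bounded''), and you correctly identify that an a.s.\ bound on $\|\bA_p\|_{\op}$ is needed and implicitly assumed; this holds in every application in the paper (the matrices involved are resolvent-type objects evaluated at a point separated from the limiting spectrum). In short: correct proof, reasonable and standard approach, and you caught a small gap in the paper's hypotheses.
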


\subsection{Free Probability and calculation of mixed moments}
\label{sec:free-probability}


Free probability is a theory of noncommutative random variables, originally introduced by Voiculescu \cite{voiculescu1992free}. We briefly describe several elementary results that are used in our calculations. For more background, see e.g. \cite{mingo2017free,couillet2011random}.

\begin{definition}[Limiting joint law]
    Let $\mathbb{X} = \left\{\bX_{1},\ldots,\bX_{l}\right\}$ be $l$ (sequences of) of $p$-by-$p$  matrices with bounded operator norm. We say $\mathbb{X}$ has an a.s. limiting joint law if for any non-commutative polynomial $\mathcal{P} \in \CC\left\langle x_1,\ldots,x_l,y_1,\ldots,y_l\right\rangle$,\footnote{By that, we mean a polynomial in $2l$ non-commutative variables. For example, $x_1x_2 \ne x_2x_1$. } the a.s. limit 
\begin{align}
    \trlim \,\mathcal{P}\left(\bX_{1},\ldots,\bX_{l},\bX^\T_{1},\ldots,\bX^\T_{l}\right) = 
    \lim_{p\to \infty} \frac1p \trace \mathcal{P}\left(\bX_{1},\ldots,\bX_{l},\bX^\T_{1},\ldots,\bX^\T_{l}\right) 
\end{align}
    exists. The mapping $ \CC\left\langle x_1,\ldots,x_l,y_1,\ldots,y_l\right\rangle \to \RR$ taking a polynomial $\mathcal{P}$ to its corresponding limit is called the joint law of $\m{S}$. 
\end{definition}

\begin{remark}
    Note that the joint law of $\m{S}_p$ is completely determined by its (non-commutative) joint moments:
\begin{align*}
    z_{i_1}\cdots z_{i_t} \,, z\in\left\{x,y\right\},\,\,i_j\in\left\{1,\ldots,l\right\} \,.
\end{align*}
    Moreover, if $\mathbb{X}=\left\{\bX\right\}$ is a single symmetric matrix with an LESD $dF$, then it limiting law is
\begin{align*}
    x^k \mapsto \int x^k dF(x) \,.
\end{align*}
\end{remark}

\begin{definition}[Free independence]
    Suppose that $\mathbb{X}_{1},\ldots,\mathbb{X}_{k}$ each have a joint limiting law. They are (asymptotically) freely independent if the following holds: let $\mathcal{P}_{j}(\mathbb{X}_{i_j})$ be any non-commutative polynomials in $\mathbb{X}_{i_j}$, then
\begin{align}
        \label{eq:asymp-freeness-def}
        \trlim\, \left\{ \left[\mathcal{P}_1(\mathbb{X}_{i_1})-\trlim(\mathcal{P}_1(\mathbb{X}_{i_1}))\Id\right] \cdots \left[\mathcal{P}_t(\mathbb{X}_{i_t})-\trlim(\mathcal{P}_t(\mathbb{X}_{i_t}))\Id\right]\right\} = 0  \,,
\end{align}
    whenever adjacent indices are always different:
    $i_1\ne i_2$, $i_2\ne i_3$ etc. 
\end{definition}


Free independence is a powerful tool for computing traces involving multiple random matrices. A fundamental result connecting free probability with random matrices states that independent unitarily/orthogonally invariant random matrices are asymptotically freely independent. To our knowledge, the complex (unitary) first appeared in the work of Voiculescu \cite{voiculescu1992free}; for the real case, we cite a result of Collins and \'Sniady \cite{collins2006integration} (see also \cite{collins2003moments}). 
The following is essentially \cite[Theorem 5.2]{collins2006integration}:

\begin{theorem}[Asymptotic freedom for orthogonally invariant matrices]
    \label{thm:freeness}
    Let $(\bX_{1}),\ldots,(\bX_{k})$ be $k$ (sequences of) $p$-by-$p$ matrices, so that  each matrix has an a.s. limiting law. Let $\bO_2,\ldots,\bO_k\sim \mathrm{Haar}(O(p))$. 
    Then
\begin{align*}
    (\bX_{1}), (\bO_2\bX_{2}\bO_2^\T), \ldots, (\bO_{k}\bX_{k}\bO_k^\T)
\end{align*}
    are asymptotically freely independent.
\end{theorem}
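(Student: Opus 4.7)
The plan is to establish asymptotic freeness by the method of moments, implementing the Weingarten-calculus strategy of Collins and \'Sniady. Write $\bY_s := \bO_s \bX_s \bO_s^\T$, with the convention $\bO_1 \equiv \bI_p$ (since $\bX_1$ is not conjugated in the statement). Unfolding the definition of asymptotic free independence, the goal is to prove that for every $t\ge 1$, every alternating index sequence $i_1\ne i_2,\, i_2\ne i_3,\, \ldots,\, i_{t-1}\ne i_t$, and every collection of noncommutative polynomials $P_1,\ldots,P_t$,
\begin{equation*}
    \frac{1}{p}\,\tr\!\Bigl[\,\prod_{s=1}^{t}\bigl(P_s(\bY_{i_s},\bY_{i_s}^\T) - \trlim P_s(\bY_{i_s},\bY_{i_s}^\T)\,\bI_p\bigr)\Bigr] \longrightarrow 0 \quad \text{a.s.}
\end{equation*}
By multilinearity and expansion of each centered factor, the task reduces to two subproblems: (i) computing the expected normalized trace of arbitrary alternating products of monomials $\bO_{i_s}\bX_{i_s}^{\varepsilon_s}\bO_{i_s}^\T$ with $\varepsilon_s\in\{1,\T\}$, and (ii) upgrading convergence in expectation to almost-sure convergence.

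For step (i), the main tool is the Weingarten formula for Haar measure on $O(p)$. Expanding the trace coordinate-wise and using mutual independence of $\bO_2,\ldots,\bO_k$, the expectation factorizes over the distinct index blocks, and for each block
\begin{equation*}
    \mathbb{E}\bigl[\bO_{a_1 b_1}\cdots \bO_{a_{2n} b_{2n}}\bigr] \;=\; \sum_{\pi,\sigma \in P_2(2n)} \mathrm{Wg}_O(p;\pi,\sigma)\,\Delta_\pi(\mathbf{a})\,\Delta_\sigma(\mathbf{b}),
\end{equation*}
where $P_2(2n)$ is the set of pair partitions of $\{1,\ldots,2n\}$, $\Delta_\pi(\mathbf{a}) = \prod_{\{u,v\}\in\pi}\delta_{a_u,a_v}$, and $\mathrm{Wg}_O$ is the orthogonal Weingarten function (odd moments vanish). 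The asymptotics we exploit are $\mathrm{Wg}_O(p;\pi,\pi) = p^{-n}(1+O(p^{-1}))$ and $\mathrm{Wg}_O(p;\pi,\sigma) = O(p^{-n-1})$ for $\pi\ne\sigma$, with uniformly bounded subleading constants.

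Substituting this into the expected trace and tracking the $p$-powers contributed by free indices identified via $\Delta_\pi,\Delta_\sigma$ on the cyclic trace graph yields a sum indexed by pair partitions compatible with the alternating block structure. A standard genus/face-count bookkeeping shows that the $O(1)$ contributions, after the outer normalization by $p^{-1}$, come only from non-crossing pair partitions that respect the block alternation; each such leading contribution factorizes as $\prod_s \trlim P_s(\bX_{i_s},\bX_{i_s}^\T)$, since Haar orthogonal invariance erases all inter-block structure. Centering $P_s \mapsto P_s - \trlim(P_s)\bI_p$ therefore annihilates every factorized leading term exactly, while all remaining pairings are $O(p^{-1})$ or smaller after normalization. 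This proves convergence to zero in expectation.

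For step (ii), I would invoke concentration of measure on $O(p)$: conditional on the uniformly bounded sequence $\{\bX_s\}$, the map $(\bO_2,\ldots,\bO_k) \mapsto p^{-1}\tr[\cdots]$ is Lipschitz in the product Frobenius metric with constant $O(p^{-1/2})$, and Gromov's concentration inequality for $\mathrm{Haar}(O(p))$ yields a Gaussian-type deviation bound $\Prob(|\cdot - \mathbb{E}[\cdot]| > \varepsilon) \le 2\exp(-c p \varepsilon^2)$; Borel--Cantelli then promotes expectation to almost-sure convergence. The hard part will be the combinatorial analysis in the preceding paragraph: isolating precisely the leading-order non-crossing pair partitions and uniformly controlling the subleading Weingarten terms is substantially more delicate in the orthogonal than in the unitary setting, because the Weingarten function is indexed by pair partitions rather than by permutations and the orthogonal group has two connected components, both of which must be handled carefully in the defect/M\"obius expansion.
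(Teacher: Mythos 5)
The paper does not prove this theorem at all: it is imported as a known result, ``essentially Theorem 5.2'' of Collins and \'Sniady, with the unitary antecedent credited to Voiculescu. Your proposal is therefore not competing with an in-paper argument but with that reference, and the strategy you outline --- method of moments, the orthogonal Weingarten expansion with the asymptotics $\mathrm{Wg}_O(p;\pi,\pi)=p^{-n}(1+O(p^{-1}))$ and $O(p^{-n-1})$ off the diagonal, followed by an upgrade from expectation to almost-sure convergence --- is indeed the route taken in that literature. The reduction of the paper's definition of asymptotic freeness to the vanishing of centered alternating traces, and the convention $\bO_1=\bI_p$ to handle the unconjugated $\bX_1$, are both correct.

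The genuine gap is the paragraph you yourself flag as ``the hard part'': the claim that, after inserting the Weingarten expansion, the only $O(1)$ contributions come from non-crossing pairings respecting the block alternation, and that centering annihilates them, is asserted rather than proved --- and that assertion is essentially the content of the theorem. As stated it is also not quite right: the surviving leading terms do not in general ``factorize as $\prod_s \trlim P_s(\bX_{i_s},\bX_{i_s}^\T)$''; the correct mechanism is that every leading pairing isolates at least one block whose contribution enters as a multiplicative factor equal to that block's normalized trace, which vanishes because the factor is centered, and establishing this requires the full defect/M\"obius bookkeeping for pair partitions (including the transpose decorations $\varepsilon_s\in\{1,\T\}$, which in the orthogonal case mix with the pairings rather than decoupling as in the unitary case). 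Two smaller points on step (ii): concentration \`a la Gromov--Milman applies to the connected group $SO(p)$, so you must either condition on the coset of each $\bO_s$ in the two-component group $O(p)$ or instead bound the variance directly by a second-moment Weingarten estimate ($\Var = O(p^{-2})$) and apply Borel--Cantelli; and the Lipschitz bound with constant $O(p^{-1/2})$ needs the a.s. uniform operator-norm bound on the $\bX_s$, which is available here but should be invoked explicitly. In short: correct skeleton, matching the cited source's method, but the combinatorial core that makes it a proof is missing.
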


\section{Proof of Lemma~\ref{lem:G2}}
\label{sec:proof-lem:G2}
The proof consists of repeated applications of (\ref{eq:asymp-freeness-def}). 
It is straightforward to verify that 
\begin{equation}
    G_{2,k}(z)=\trlim\left( \bA_z\bSigma \bA_z^\T \bC_k \right),\quad\textrm{where}
    \quad
    \bA_z:=(z\bS-\bE)^{-1}\bS,\;\bC_k=\bSigma^{-1/2}\bD_k\bD_k^\T \bSigma^{-1/2} \,.
\end{equation}
Since $\bA_z$ is orthogonal invariant and has bounded norm (since $z\notin [\sigmaMin,\sigmaBulk])$, it is freely independent of all deterministic matrices. 
To carry out the computation,  we apply \eqref{eq:asymp-freeness-def} successively.
For brevity, denote $\bm{\Delta}_z=\bA_z-\trlim(\bA_z)\bI$. Then
\begin{align*}
    G_{2,k}(z)
    &=\trlim\left(\bm{\Delta}_z\bSigma \bA_z^\T \bC_k  \right) + \trlim(\bA_z)\trlim\left(\bSigma\bA_z^\T \bC_k\right)\\
    &= \trlim\left(\bm{\Delta}_z\bSigma \bA_z^\T \bC_k  \right) + \left(\trlim(\bA_z)\right)^2
    \trlim(\bC_k\bSigma) 
    = 
    \trlim\left(\bm{\Delta}_z\bSigma \bA_z^\T \bC_k  \right) + (\stiel(z))^2\,,
\end{align*}
where the last equality uses the assumption $\trlim(\bD_k\bD_k^\T)$.
We simplify further the first term on the r.h.s.,
\begin{align*}
    \trlim\left(\bm{\Delta}_z\bSigma \bA_z^\T \bC_k  \right) = \trlim\left(\bm{\Delta}_z\bSigma \bm{\Delta}_z^\T \bC_k  \right) + \trlim(\bA_z^\T) \underbrace{\trlim\left(\bm{\Delta}_z\bSigma \bC_k  \right)}_{=0} =
    \trlim\left(\bm{\Delta}_z\bSigma \bm{\Delta}_z^\T \bC_k  \right)\,.
\end{align*}
Next,
\begin{align*}
    \trlim\left(\bm{\Delta}_z\bSigma \bm{\Delta}_z^\T \bC_k  \right)
    &=
    \trlim\left(\bm{\Delta}_z(\bSigma-\mu\Id) \bm{\Delta}_z^\T \bC_k  \right) + \mu\cdot \trlim\left(\bm{\Delta}_z \bm{\Delta}_z^\T \bC_k  \right) \\
    &= 
    \underbrace{\trlim\left(\bm{\Delta}_z(\bSigma-\mu\Id) \bm{\Delta}_z^\T (\bC_k-\tau_k\Id)  \right)}_{=0}
    +
    \tau_k \underbrace{\trlim\left(\bm{\Delta}_z(\bSigma-\mu\Id) \bm{\Delta}_z^\T   \right)}_{=0}
    + \mu\cdot \trlim\left(\bm{\Delta}_z \bm{\Delta}_z^\T \bC_k  \right) \\
    &= \mu\tau_k\cdot \trlim(\bm{\Delta}_z\bm{\Delta}_z^\T) \,.
\end{align*}
Lastly, $\trlim(\bm{\Delta}_z\bm{\Delta}_z^\T) = \trlim(\bA_z\bA_z^\T)-(\trlim(\bA_z))^2=\UpsTwo(z)-(\stiel(z))^2$. Collecting all the terms above, we obtain the claimed formula for $G_{2,k}(z)$. 
\hfill $\blacksquare$

\section{Closed-form formulas for $\UpsOne,\UpsTwo$}
\label{sec:New:Ups}
In this section we prove the closed-form formulas (\ref{eq:New:UpsOneFormula}) and (\ref{eq:New:UpsTwoFormula}) for the mixed traces
\begin{align*}
    \UpsOne(z) = \lim_{p\to\infty} p^{-1} \tr (z\bS-\bE)^{-1}\bS^2,\quad
    \UpsTwo(z) = \lim_{p\to\infty} p^{-1}\tr (z\bS-\bE)^{-2}\bS^2 \,,
\end{align*}
where $z\in \CC\setminus [\sigmaMin^2,\sigmaBulk^2]$.

First, because the formula (\ref{eq:MPStiel-def}) for the Stieltjes transform is only applicable for arguments smaller than the left edge of the Marchenko-Pastur law, $0<z<(1-\sqrt{\beta})^2$, we begin {with:}
%
\begin{lemma}
\label{lem:range_sbar}
    For any $z\in (\sigmaBulk^2,\infty)$, one has $0<-\sbar(z)<(1-\sqrt{\beta})^2$.
\end{lemma}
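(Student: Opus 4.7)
The strategy is to exploit the fact that $-\sbar$ is a positive, strictly decreasing, continuous function on $(\sigmaBulk^2,\infty)$, so both required bounds reduce to easily verified monotonicity properties plus a short boundary evaluation at $z=\sigmaBulk^2$. Positivity is immediate from Proposition~\ref{prop:stieltjes}, which identifies $\sbar(z)$ as the a.s.\ limit of $n^{-1}\tr(\N^\T\N - z\Id)^{-1}$; thus $\sbar(z)=\int(\lambda-z)^{-1}d\nu(\lambda)$ for a probability measure $\nu$ supported on $[0,\sigmaBulk^2]$ (by Theorems~\ref{thm:wachter-density} and \ref{thm:wachter-edge}), and for $z>\sigmaBulk^2$ the integrand is strictly negative on the support, giving $\sbar(z)<0$. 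Differentiation under the integral gives $\sbar'(z)=\int(\lambda-z)^{-2}d\nu(\lambda)>0$ on $(\sigmaBulk^2,\infty)$, so $-\sbar$ is strictly decreasing there; consequently, to prove the strict upper bound $-\sbar(z)<(1-\sqrt{\beta})^2$ for all $z$ in this interval, it suffices to establish the boundary inequality $\lim_{z\to\sigmaBulk^2+}\bigl(-\sbar(z)\bigr)\le(1-\sqrt{\beta})^2$.

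For this boundary evaluation, my plan is to plug $z=\sigmaBulk^2$ directly into the explicit formula (\ref{eq:stieltjes-assoc-def}). The discriminant $(z(1-\beta)+(1-\gamma))^2-4z$ vanishes there, since by (\ref{eq:sigma-bulk-min}) the scalar $\sigmaBulk$ is precisely a root of the quadratic $(1-\beta)t^2-2t+(1-\gamma)=0$. That defining quadratic supplies two useful identities,
\[
\sigmaBulk^2(1-\beta)+(1-\gamma)=2\sigmaBulk, \qquad \gamma+\beta\sigmaBulk^2=(\sigmaBulk-1)^2,
\]
which substantially simplify both the numerator and denominator of $-\sbar(\sigmaBulk^2)$. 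Abbreviating $d:=\sqrt{\beta+\gamma-\beta\gamma}$, so that $(1-\beta)\sigmaBulk=1+d$ and $\sigmaBulk-1=(\beta+d)/(1-\beta)$, these substitutions collapse the expression to
\[
-\sbar(\sigmaBulk^2)=\frac{d(1-\beta)^2}{(1+d)(\beta+d)}.
\]

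It then remains to verify the inequality $\frac{d(1-\beta)^2}{(1+d)(\beta+d)}\le(1-\sqrt{\beta})^2$. Using the factorization $(1-\beta)^2=(1-\sqrt{\beta})^2(1+\sqrt{\beta})^2$ and clearing denominators, this reduces to showing $(1+d)(\beta+d)-d(1+\sqrt{\beta})^2\ge 0$; a short expansion reveals the left hand side to equal exactly $(d-\sqrt{\beta})^2$, which is nonnegative. Equality would require $d=\sqrt{\beta}$, i.e.\ $\gamma(1-\beta)=0$, which is ruled out by the standing assumptions $\gamma>0$ and $\beta<1$, so the boundary inequality is strict. I do not foresee any real obstacle here; the only nontrivial observation is the identity $\gamma+\beta\sigmaBulk^2=(\sigmaBulk-1)^2$ extracted from the defining quadratic of $\sigmaBulk$, which cleanly factors the denominator and sets up the perfect-square identity at the end.
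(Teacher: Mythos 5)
Your proof is correct, and while it shares the paper's overall reduction (positivity and monotonicity of $-\sbar$ reduce the claim to evaluating the left endpoint $-\sbar(\sigmaBulk^2)$), the execution of the key step is genuinely different. The paper writes $-\sbar(\sigmaBulk^2)$ as a sum $f_1(\gamma;\beta)+f_2(\gamma;\beta)$ and then runs a full one-variable optimization of $\gamma\mapsto f(\gamma;\beta)$ over $[0,\infty)$: it computes both partial derivatives in closed form, changes variables to $r=\beta+\gamma-\beta\gamma$, factors the resulting expression as a cubic in $\sqrt{r}$, checks that the only admissible critical point corresponds to $\gamma=0$, and handles $\gamma=1$ separately via a limit computation. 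You instead use the defining quadratic of $\sigmaBulk$ (from which you extract the identities $\sigmaBulk^2(1-\beta)+(1-\gamma)=2\sigmaBulk$ and $\gamma+\beta\sigmaBulk^2=(\sigmaBulk-1)^2$) to collapse $-\sbar(\sigmaBulk^2)$ in one stroke to $\frac{d(1-\beta)^2}{(1+d)(\beta+d)}$, after which the required bound reduces to the nonnegativity of $(d-\sqrt{\beta})^2$. Your route is substantially shorter, avoids the calculus argument entirely, and pins down the equality case ($d=\sqrt\beta$, i.e. $\gamma(1-\beta)=0$) cleanly; you also supply the justification for positivity and monotonicity of $-\sbar$ via the Stieltjes-transform representation of a measure supported in $[0,\sigmaBulk^2]$, which the paper asserts without comment. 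The paper's optimization approach has the marginal benefit of exhibiting $(1-\sqrt\beta)^2$ as the supremum of $-\sbar(\sigmaBulk^2)$ over all $\gamma>0$, but for the stated lemma your algebraic argument is the more economical one.
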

\begin{proof}
    Since $-\sbar(z)$ is positive and decreasing for $z>\sigmaBulk^2$, it suffices to show that $-\sbar(\sigmaBulk^2)\le (1-\sqrt{\beta})^2$. A straightforward calculation gives $-\sbar(\sigmaBulk^2)=f(\gamma;\beta)=f_1(\gamma;\beta)+f_2(\gamma;\beta)$ where
\begin{align}
    f_1(\gamma;\beta) = \frac{\beta(\beta -\sqrt{\beta+\gamma-\beta\gamma})}{\beta+\gamma},
\end{align}
and
\begin{align}
f_2(\gamma;\beta) = \frac{1-\sqrt{\beta+\gamma-\beta\gamma}}{1-\gamma} \,;
\end{align}
note that $f_2(\gamma;\beta)$ is well-defined and {differentiable} at $\gamma=1$, where its value is {$f_2(1;\beta)=\frac12(1-\beta)$}.

We claim that the function $\gamma \mapsto f(\gamma;\beta)$, $\gamma\in[0,\infty)$, is maximized at $\gamma=0$. This, in turn, implies 
\begin{math}
-\sbar(\sigmaBulk^2)\le f(0;\beta)=\beta-\sqrt{\beta}+1-\sqrt{\beta}=(1-\sqrt{\beta})^2
\end{math}
as required. To show this, let us compute the derivative of $f(\cdot;\beta)$. One has
\begin{align}
\label{eq:f1_partial}
    \partial_{\gamma} f_1(\gamma;\beta)
= \beta\frac{- \frac{1-\beta}{2\sqrt{\beta+\gamma-\beta\gamma}}(\beta+\gamma) 
    - (\beta-\sqrt{\beta+\gamma-\beta\gamma})}{(\beta+\gamma)^2}
= \frac{\beta \left(  \beta - \sqrt{\beta+\gamma-\beta\gamma} \right)^2}{2(\beta+\gamma)^2
    \sqrt{\beta+\gamma-\beta\gamma}}\,,
\end{align}
and
\begin{align}
\label{eq:f2_partial}
    \partial_{\gamma} f_2(\gamma;\beta) = 
\frac{-\frac{1-\beta}{2\sqrt{\beta+\gamma-\beta\gamma}}(1-\gamma)    
    +(1-\sqrt{\beta+\gamma-\beta\gamma})}{(1-\gamma)^2} 
= - \frac{\left( 1 - \sqrt{\beta+\gamma-\beta\gamma} \right)^2} {2(1-\gamma)^2 \sqrt{\beta+\gamma-\beta\gamma}}\,,
\end{align}
(which may also be {continuously} extended to $\gamma=1$). Since $f(\gamma; \beta) \to 0$ as $\gamma \to \infty$, it is enough to show that the only solution to $\partial_{\gamma} f(\gamma; \beta) = 0$ for $\gamma \in [0, \infty]$ is $\gamma=0$. {First, let us consider $\gamma=1$. One may readily calculate (e.g., using L'H\^{o}pital's rule): $\partial_{\gamma} f_2(1;\beta)=-\frac18(1-\beta)^2$ and $\partial_{\gamma} f_1(1;\beta)=\frac{\beta(1-\beta)^2}{2(1+\beta)^2}$, so $\partial_{\gamma} f(1;\beta)=-\frac{(1-\beta)^4}{8(1+\beta)^2}<0$.} {We next consider values $\gamma\ne 1$. From \eqref{eq:f1_partial} and \eqref{eq:f2_partial}, if $\partial_{\gamma} f(\gamma;\beta)=0$ then either}
\begin{align}
\label{eq:1455}
(1-\gamma)\sqrt{\beta}\left(  \beta - \sqrt{\beta+\gamma-\beta\gamma} \right)
= (\beta+\gamma)\left( 1 - \sqrt{\beta+\gamma-\beta\gamma} \right),
\end{align}
\begin{align}
\label{eq:1456}
(1-\gamma)\sqrt{\beta}\left(  \beta - \sqrt{\beta+\gamma-\beta\gamma} \right)
=- (\beta+\gamma)\left( 1 - \sqrt{\beta+\gamma-\beta\gamma} \right).
\end{align}
%
{Making a change of variables $r = \beta+\gamma-\beta\gamma$, so that $\gamma=\frac{r-\beta}{1-\beta}$,} equations \eqref{eq:1455} and \eqref{eq:1456} may be rewritten in terms of $r$ as follows:
\begin{align}
\label{eq:1457}
\sqrt{\beta}\left(\beta - \sqrt{r} \right) \left( \frac{1-r}{1-\beta}\right)
\pm \left( 1 - \sqrt{r} \right) \left( \frac{r-\beta^2}{1-\beta}\right) = 0\,.
\end{align}
Using $r-\beta^2 = (\sqrt{r} - \beta)(\sqrt{r} + \beta)$ and $1-r = (1- \sqrt{r})(1+\sqrt{r})$, \eqref{eq:1457} is equivalent to:
\begin{align}
\label{eq:1458}
(\beta - \sqrt{r})(1-\sqrt{r}) \left[ \sqrt{\beta}(1+\sqrt{r}) \pm (\sqrt{r}+\beta) \right] = 0.
\end{align}
The roots are $r = \beta^2$, $r = 1$, and $r = \beta$. {Since $\gamma=(r-\beta)/(1-\beta)$, and we only consider $\gamma\in [0,\infty)\setminus\{1\}$ (recall that $\gamma=1$ was treated separately) it follows that the only solution to $\partial_\gamma f(\gamma;\beta) = 0$ with  $\gamma \in [0,\infty)$ is $\gamma= 0$, concluding the proof.}
\end{proof}

Let $\bS=\bW\bLambda\bW^\T$ be an eigen-decomposition, $\bLambda=\diag(\lambda_1,\ldots,\lambda_p)$.
Since $\bE$ is orthogonally invariant and independent of $\bS$, $\bW^\T \bE \bW \overset{d}{=}\bE$ and is independent of $\bLambda$. Consider the resolvent
\begin{equation}
    \bR_z(w) = \left( w\bI + z\bLambda - \bE  \right)^{-1} \,.
\end{equation} 
Clearly, 
\begin{align*}
    \UpsOne(z)&=\lim_{p\to\infty}p^{-1}\tr \left[ \bR_z(0)\bLambda^2 \right] = \lim_{p\to\infty}p^{-1} \sum_{i=1}^p [\bR_z(0)]_{ii}\lambda_i^2,\\ \UpsTwo(z) &= -\lim_{p\to\infty}p^{-1}\tr \left[ \frac{\partial}{\partial w}{\bR_z(0)}\bLambda^2 \right] = -\lim_{p\to\infty}p^{-1} \sum_{i=1}^p \frac{\partial}{\partial w}[\bR_z(0)]_{ii}\lambda_i^2 \,,
\end{align*}
Recall: since $z>\sigmaBulk^2$, assuming small enough $w$, $\bR_z(w)$ is a.s. well-defined and has bounded operator norm.
The main ingredient of the proof consists of deriving formulas for the \emph{individual} diagonal entries of $\bR_z(w)$. 
We remark that the calculation below uses rather standard ideas from random matrix theory, see for example the book \cite{bai2009book}.

For brevity, denote $\bA_{z,w}=w\bI+z\bLambda-\bE$ and so $\bR_{z}(w)=\bA_{z,w}^{-1}$. 
Let $i\in [n]$ be any coordinate. Denote by $\bP_i \in \RR^{(p-1)\times p}$ the projection onto the coordinate set $[p]\setminus\{i\}$. Let $\be_1,\ldots,\be_p\in\RR^p$ be the standard basis vectors. Up to a permutation of the coordinates, $\bA_{z,w}$ has the block form:
\begin{align*}
    \bA_{z,w} = \begin{bmatrix}
    [\bA_{z,w}]_{i,i} & \be_i^\T \bA_{z,w} \bP_i^\T \\
    \bP_i \bA_{z,w}\be_i & \bP_i \bA_{z,w}\bP_i^\T
    \end{bmatrix} = \begin{bmatrix}
    w+z\lambda_i-n^{-1}[\bZ\bZ^\T ]_{ii} & -n^{-1}\be_i^\T \bZ \bZ^\T  \bP_i^\T \\
    -n^{-1} \bP_i \bZ\bZ^\T \be_i & \bP_i \bA_{z,w}\bP_i^\T \,.
    \end{bmatrix}
\end{align*}
Applying the block matrix inversion formula, 
\begin{align*}
    [\bA_{z,w}^{-1}]_{ii} = \left[ w+z\lambda_i-n^{-1}[\bZ\bZ^\T ]_{ii}
    -( n^{-1}\be_i^\T \bZ \bZ^\T  \bP_i^\T )(\bP_i \bA_{z,w}\bP_i^\T)^{-1} (n^{-1} \bP_i \bZ\bZ^\T \be_i) \right]^{-1} \,, 
\end{align*}
equivalently,
\begin{equation}\label{eq:New:1}
    \frac{1}{[\bR_{z}(w)]_{ii}} = w+z\lambda_i-n^{-1}[\bZ\bZ^\T ]_{ii} 
    -( n^{-1}\be_i^\T \bZ \bZ^\T  \bP_i^\T )(\bP_i \bA_{z,w}\bP_i^\T)^{-1} (n^{-1} \bP_i \bZ\bZ^\T \be_i) \,.
\end{equation}
We will show that the r.h.s. of (\ref{eq:New:1}) concentrates around a deterministic quantity.

We start with the following.
\begin{lemma}\label{lem:New:1}
    A.s., 
    \begin{align*}
        &\max_{1\le i \le n} \left| (n^{-1}\be_i^\T \bZ \bZ^\T  \bP_i^\T )(\bP_i \bA_{z,w}\bP_i^\T)^{-1} (n^{-1} \bP_i \bZ\bZ^\T \be_i) - n^{-1} \tr\left[   \bP_i^\T (\bP_i \bA_{z,w}\bP_i^\T)^{-1} \bP_i \bE \right] \right| \longrightarrow 0 \,,\\
        &\max_{1\le i \le n} \left| (n^{-1}\be_i^\T \bZ \bZ^\T  \bP_i^\T ) \frac{\partial}{\partial w}(\bP_i \bA_{z,w}\bP_i^\T)^{-1} (n^{-1} \bP_i \bZ\bZ^\T \be_i) - n^{-1} \tr\left[   \bP_i^\T  \frac{\partial}{\partial w}(\bP_i \bA_{z,w}\bP_i^\T)^{-1} \bP_i \bE \right] \right| \longrightarrow 0\,.
    \end{align*}
\end{lemma}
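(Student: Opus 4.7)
The plan is to apply the Hanson-Wright concentration inequality conditionally on the rows of $\bZ$ other than the $i$-th, and then union bound over $i$. Conjugating by $\bW$ (permissible since $\bE$ is orthogonally invariant and $\bW$ is independent of $\bZ=\Zin$), I may assume WLOG that $\bS=\bLambda$ is diagonal and $\bZ$ still has i.i.d.\ $\mathcal{N}(0,1)$ entries. Writing $\bz_i^\T \in \RR^n$ for the $i$-th row of $\bZ$ and $\tilde{\bZ}_i=\bP_i\bZ \in \RR^{(p-1)\times n}$ for the remaining rows, the key observation is
\[
\bP_i\bA_{z,w}\bP_i^\T \;=\; w\bI + z\bP_i\bLambda\bP_i^\T - n^{-1}\tilde{\bZ}_i\tilde{\bZ}_i^\T ,
\]
so $\bM_i := (\bP_i\bA_{z,w}\bP_i^\T)^{-1}$ depends only on $(\tilde{\bZ}_i,\bS)$ and is independent of $\bz_i$.

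A direct calculation gives $n^{-1}\bP_i\bZ\bZ^\T\be_i = n^{-1}\tilde{\bZ}_i\bz_i$ and $n^{-1}\tr(\bP_i^\T\bM_i\bP_i\bE) = n^{-2}\tr(\bA_i)$, where $\bA_i := \tilde{\bZ}_i^\T\bM_i\tilde{\bZ}_i \in \RR^{n\times n}$. Hence the quantity to be bounded is $n^{-2}|\bz_i^\T\bA_i\bz_i - \tr(\bA_i)|$. Conditioning on $(\tilde{\bZ}_i,\bS)$, the vector $\bz_i$ is standard Gaussian and independent of $\bA_i$, so Hanson-Wright yields
\[
\Prob\bigl(|\bz_i^\T\bA_i\bz_i - \tr(\bA_i)| > t \,\bigm|\, \tilde{\bZ}_i,\bS\bigr) \;\le\; 2\exp\!\left(-c\min\!\left(\tfrac{t^2}{\|\bA_i\|_F^2},\tfrac{t}{\|\bA_i\|_{op}}\right)\right) .
\]

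The main technical step is to establish a.s.\ uniform-in-$i$ bounds $\|\bA_i\|_{op} = O(n)$ and $\|\bA_i\|_F^2 = O(n^3)$. Both reduce to the key obstacle: the uniform-in-$i$ operator-norm bound $\|\bM_i\|_{op} = O(1)$ a.s. (combined with $\|\tilde{\bZ}_i\|_{op}\le\|\bZ\|_{op} = O(\sqrt n)$ from Bai--Yin). I would prove $\|\bM_i\|_{op} = O(1)$ via Cauchy interlacing: the eigenvalues of the principal submatrix $\bP_i(z\bLambda - \bE)\bP_i^\T$ interlace with those of $z\bLambda-\bE$, which share eigenvalues with $z\bS-\bE$. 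For $z>\sigmaBulk^2$, writing $z\bS-\bE = \bS^{1/2}(z\bI - \bS^{-1/2}\bE\bS^{-1/2})\bS^{1/2}$ and combining $\lambda_{\max}(\bS^{-1}\bE)\to\sigmaBulk^2$ (Theorem~\ref{thm:wachter-edge}) with $\lambda_{\min}(\bS)\to(1-\sqrt{\beta})^2>0$ (Marchenko-Pastur edge, applicable since $\beta<1$) furnishes $\lambda_{\min}(z\bS-\bE)\ge \delta >0$ a.s.\ eventually; interlacing transfers this lower bound to $\bP_i(z\bLambda-\bE)\bP_i^\T$, and a perturbation argument extends it to $\bP_i\bA_{z,w}\bP_i^\T$ for all $w$ in a small enough (uniform-in-$i$) neighborhood of $0$.

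Setting $t=\eps n^2$ in the Hanson-Wright bound then gives a tail of the form $2\exp(-c\eps^2 n)$, and a union bound over $i=1,\ldots,p$ together with Borel-Cantelli (the sum $\sum_n p\exp(-c\eps^2 n)$ converges since $p=O(n)$) yields the asserted a.s.\ uniform convergence to $0$. The second, derivative, statement of the lemma follows from the same argument applied to $\tilde{\bZ}_i^\T\bM_i^2\tilde{\bZ}_i$, using $\partial_w\bM_i = -\bM_i^2$ and $\|\bM_i^2\|_{op}\le \|\bM_i\|_{op}^2 = O(1)$. The uniform-in-$i$ bound on $\|\bM_i\|_{op}$ is really the only nontrivial step; once in hand, the remainder is standard Hanson-Wright-plus-union-bound machinery.
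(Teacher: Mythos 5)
Your proof is correct and follows the same route as the paper: condition on $(\tilde{\bZ}_i,\bS)$ so $\bz_i$ is an independent standard Gaussian, apply Hanson--Wright, obtain a uniform operator-norm bound on $(\bP_i\bA_{z,w}\bP_i^\T)^{-1}$ via Cauchy interlacing, and finish with a union bound over $i$ and Borel--Cantelli. You supply more detail than the paper's terse argument (in particular, the explicit positive-definiteness of $z\bS-\bE$ for $z>\sigmaBulk^2$, which is what the interlacing bound in the paper's footnote implicitly relies on), but the ideas and structure are the same.
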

\begin{proof}
    Observe that $\bZ^\T \be_i\sim \m{N}(\0,\bI_{n\times n})$. Moreover, this random vector is independent of $\bP_i \bZ$, hence also of $\bA_{z,w}$. Furthermore, the random matrices $n^{-1}\bZ^\T  \bP_i^\T (\bP_i \bA_{z,w}\bP_i^\T)^{-1} \bP_i \bZ$, as well as their $w$-derivatives, have operator norm bounded (a.s.) by a constant\footnote{Note that by eigenvalue interlacing, $\lambda_{\min}(\bP_i\bA_{z,w}\bP_i^\T) \ge \lambda_{\min}(\bA_{z,w})$, hence $\|(\bP_i\bA_{z,w}\bP_i^\T)^{-1}\| \le \|\bA_{z,w}^{-1}\|$.}. The result follows by the Hanson-Wright inequality (cf. \cite[Theorem 6.2.1]{vershynin2018high}) and a union bound over $1\le i\le n$.  
\end{proof}

We next consider the trace in Lemma~\ref{lem:New:1}. 
\begin{lemma}
    \label{lem:New:2}
    A.s.,
    \begin{align*}
        &\max_{1\le i \le n} \left| n^{-1} \tr\left[   \bP_i^\T (\bP_i\bA_{z,w}\bP_i^\T)^{-1} \bP_i \bE \right] - n^{-1} \tr\left[ \bA_{z,w}^{-1} \bE \right] \right| \longrightarrow 0\,,\\
        &\max_{1\le i \le n}\left| n^{-1} \tr\left[   \bP_i^\T  \frac{\partial}{\partial w}(\bP_i\bA_{z,w}\bP_i^\T)^{-1} \bP_i \bE \right]
        -
        n^{-1} \tr\left[   \frac{\partial}{\partial w}\bA_{z,w}^{-1} \bE \right]
        \right|\longrightarrow 0\,.
    \end{align*}
\end{lemma}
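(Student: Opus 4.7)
The plan is a standard leave-one-out / rank-perturbation argument. For any invertible symmetric $\bA$, a Schur complement computation (permute coordinate $i$ to position $1$) yields the rank-one identity
$$\bA^{-1} - \bP_i^\T (\bP_i \bA \bP_i^\T)^{-1} \bP_i \;=\; \frac{\bA^{-1}\be_i \be_i^\T \bA^{-1}}{\be_i^\T \bA^{-1}\be_i}.$$
Taking $\bA = \bA_{z,w}$ and writing $\bB_i := \bP_i \bA_{z,w} \bP_i^\T$ already settles the first claim up to a uniform operator-norm bound. For the derivative, I use the elementary identity $(\bP_i^\T \bB_i^{-1} \bP_i)^2 = \bP_i^\T \bB_i^{-2} \bP_i$ (which follows from $\bP_i \bP_i^\T = \bI_{p-1}$) together with the factorization $\bA_{z,w}^{-2} - \bP_i^\T \bB_i^{-2} \bP_i = \bA_{z,w}^{-1}\Delta + \Delta \cdot \bP_i^\T \bB_i^{-1}\bP_i$, where $\Delta := \bA_{z,w}^{-1} - \bP_i^\T \bB_i^{-1}\bP_i$ is the rank-one difference above. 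Each summand on the right has rank at most $1$, so the second-order difference has rank at most $2$.

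Next I establish operator-norm bounds that are uniform in $i$. The crucial observation is that, for $z > \sigmaBulk^2$ and $|w|$ sufficiently small, $\bA_{z,w}$ is almost surely \emph{positive definite}. Indeed, $\bA_{z,0} = \bS^{1/2}\bigl(z\bI - \bS^{-1/2}\bE\bS^{-1/2}\bigr)\bS^{1/2}$, and by Theorem~\ref{thm:wachter-edge} the inner bracket is a.s.\ positive definite with spectrum bounded away from zero; continuity in $w$ extends this to small $|w|$. Positive definiteness gives $[\bA_{z,w}^{-1}]_{ii} \ge 1/\|\bA_{z,w}\|$, which controls the denominator in the rank-one identity, so
$$\|\Delta\| \;=\; \frac{\|\bA_{z,w}^{-1}\be_i\|^2}{[\bA_{z,w}^{-1}]_{ii}} \;\le\; \|\bA_{z,w}^{-1}\|^2 \cdot \|\bA_{z,w}\|,$$
which is a.s.\ bounded uniformly in $i$. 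Cauchy interlacing further yields $\lambda_{\min}(\bB_i) \ge \lambda_{\min}(\bA_{z,w})$, so $\|\bB_i^{-1}\|$ is bounded uniformly in $i$; combining with the factorization, the second-order difference is a.s.\ operator-norm bounded uniformly in $i$ as well.

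The conclusion follows by applying the standard inequality $|\tr(MN)| \le \mathrm{rank}(M)\cdot \|M\|\cdot \|N\|$ with $M$ being one of the two low-rank differences and $N = \bE$. Since $\|\bE\|$ is a.s.\ bounded (by Bai--Yin), each trace is at most a constant a.s., uniformly in $i$; dividing by $n$ yields both convergences.

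The main technical subtlety is the uniform-in-$i$ control of $\|\bB_i^{-1}\|$. If $\bA_{z,w}$ were merely invertible but indefinite, Cauchy interlacing would fail to preclude some $\bB_i$ from being arbitrarily ill-conditioned (an eigenvalue of $\bA_{z,w}$ separating a negative and a positive interlacing eigenvalue of $\bB_i$ could straddle zero). The argument succeeds precisely because $z$ is chosen strictly outside the support of Wachter's distribution, which turns $\bA_{z,w}$ into an actual positive definite matrix and makes interlacing effective.
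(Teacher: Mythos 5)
Your proof is correct and rests on the same core idea as the paper's: Schur-complement/rank-one perturbation of the resolvent, uniform-in-$i$ operator-norm control, and the trace bound $|\trace(MN)|\le \rank(M)\,\|M\|\,\|N\|$. The packaging differs in useful ways. The paper proceeds in two steps (first $\bP_i\bA_{z,w}^{-1}\bP_i^\T - (\bP_i\bA_{z,w}\bP_i^\T)^{-1}=\bq\bq^\T$ is rank one, then $\bI-\bP_i^\T\bP_i=\be_i\be_i^\T$ is rank one), whereas you collapse this into the single leave-one-out identity $\bA^{-1}-\bP_i^\T(\bP_i\bA\bP_i^\T)^{-1}\bP_i = \bA^{-1}\be_i\be_i^\T\bA^{-1}/[\bA^{-1}]_{ii}$, which directly compares the two traces; for the $w$-derivative, your factorization $X^2-Y^2 = X(X-Y)+(X-Y)Y$ together with $(\bP_i^\T\bB_i^{-1}\bP_i)^2=\bP_i^\T\bB_i^{-2}\bP_i$ gives the rank-$\le 2$ bound cleanly where the paper only says ``by a similar calculation.'' Most usefully, you make explicit that $\bA_{z,w}=\bS^{1/2}(z\bI-\bS^{-1/2}\bE\bS^{-1/2})\bS^{1/2}+w\bI$ is a.s.\ positive definite for $z>\sigmaBulk^2$ and small $|w|$, which is precisely what is needed for the interlacing bound $\|(\bP_i\bA\bP_i^\T)^{-1}\|\le\|\bA^{-1}\|$ (cited as a footnote in the paper's Lemma~\ref{lem:New:1}) and for the lower bound $[\bA^{-1}]_{ii}\ge 1/\|\bA\|$ controlling $\|\Delta\|$ uniformly in $i$. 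The paper uses these facts implicitly; your version spells out why they hold.
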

\begin{proof}
By the block matrix inversion formula,
\begin{align*}
    \bP_i\bA_{z,w}^{-1}\bP_i^\T = \left( \bP_i \bA_{z,w}\bP_i^\T - {[\bA_{z,w}]_{ii}}^{-1} \bP_i\bA_{z,w}\be_i\be_i\bA_{z,w}\bP_i^\T \right)^{-1} \,. 
\end{align*}
Thus, 
\begin{align*}
    \bP_i\bA_{z,w}^{-1}\bP_i^\T - (\bP_i\bA_{z,w}\bP_i^\T)^{-1}  = \bP_i\bA_{z,w}^{-1}\bP_i^\T \left( {[\bA_{z,w}]_{ii}}^{-1} \bP_i\bA_{z,w}\be_i\be_i\bA_{z,w}\bP_i^\T \right) (\bP_i\bA_{z,w}\bP_i^\T)^{-1} 
\end{align*}
is rank $1$, and clearly has bounded operator norm. Write $\bP_i\bA_{z,w}^{-1}\bP_i^\T - (\bP_i\bA_{z,w}\bP_i^\T)^{-1}=\bq\bq^\T$, so
\begin{align*}
    \max_{1\le i \le n}\left| n^{-1} \tr\left[   \bP_i^\T (\bP_i\bA_{z,w}\bP_i^\T)^{-1} \bP_i \bE \right] - n^{-1} \tr\left[   \bP_i^\T (\bP_i(\bA_{z,w})^{-1}\bP_i^\T) \bP_i \bE \right] \right| 
    &= \max_{1\le i \le n}\left| n^{-1}\bq_i^\T (\bP_i\bE\bP_i^\T)\bq_i \right| \\
    &\le  \max_{1\le i \le n} n^{-1} \|\bq_i\|^2\|\bP_i\bE\bP_i^\T\| \longrightarrow 0 \,.
\end{align*}
Moreover, $\bI - \bP_i^\T \bP_i = \be_i\be_i^\T$ is rank $1$, allowing us to deduce 
\[
\max_{1\le i \le n}\left| n^{-1} \tr\left[   \bP_i^\T \bP_i(\bA_{z,w})^{-1}\bP_i^\T \bP_i \bE \right] - n^{-1}\tr\left[(\bA_{z,w})^{-1}\bE\right] \right|\longrightarrow 0
\]
by a similar argument. This establishes the first claim of the Lemma. The second claim (pertaining to the $w$-derivatives) follows by a similar calculation. 
\end{proof}

Now,
\begin{equation}
    \label{eq:New:3}
    \frac1n \tr[\bA_{z,w}^{-1}\bE] = -\frac{p}{n} + \frac{1}{n}\tr\left[ \bA_{z,w}^{-1}(z\bLambda + w\bI) \right] = \gamma \left( -1 + w\frac1p\tr(\bR_z(w)) + z\frac1p\tr(\bR_z(w)\bLambda) \right) \,.
\end{equation}
Setting $w=0$, by
Propositions~\ref{prop:stieltjes} and \ref{prop:res}, 
\begin{equation}\label{eq:New:2}
\begin{split}
    &\frac1p \tr(\bR_z(0))\overset{d}{=}\frac1p \tr(z\bS-\bE)^{-1} \longrightarrow -\res(z)\,,\\
    & \frac1p \tr(\bR_z(0)\bLambda) \overset{d}{=}\frac1p \tr ((z\bS-\bE)^{-1}\bS) \longrightarrow -\stiel(z)\\
    & \left.\frac{\partial }{\partial w} \frac1p \tr(\bR_z(w)\bLambda)\right|_{w=0}\overset{d}{=} -\frac1p\tr(z\bS-\bE)^{-2}\bS) \longrightarrow -\res'(z) \,.
\end{split}
\end{equation}
Combining (\ref{eq:New:1}) with Lemmas~\ref{lem:New:1}, \ref{lem:New:2} and Eqs. (\ref{eq:New:3}), (\ref{eq:New:2}), along with $\max_{1\le i \le n}|[n^{-1}\bZ\bZ^\T]_{ii}-1|\to 0$, yields $\max_{1\le i \le n} |[\bR_{z}(0)]_{ii}-\rho_i(z)|\to 0$, where
\begin{equation}\label{eq:New:R}
     \frac{1}{\rho_i(z)} = z\lambda_i-1 +\gamma(1+z\stiel(z)) = z(\lambda_i + \sbar(z) )\,.
\end{equation}
The second equality in (\ref{eq:New:R}) uses the relation $\sbar(z) = \gamma\stiel(z) - (1-\gamma)\frac1z  $ between the Stieltjes and the associated transforms. 
Furthermore, differentiating (\ref{eq:New:1}) with respect to $w$ yields
\begin{align*}
    -\frac{\frac{\partial}{\partial w}[\bR_{z}(0)]_{ii}}{[\bR_{z}(0)]_{ii}^2} = 1 - \left.\frac{\partial}{\partial w} ( n^{-1}\be_i^\T \bZ \bZ^\T  \bP_i^\T ){(\bP_i \bA_{z,w}\bP_i^\T)^{-1}} (n^{-1} \bP_i \bZ\bZ^\T \be_i)\right|_{w=0}\,.
\end{align*}
Consequently, $\max_{1\le i \le n} |\frac{\partial}{\partial w}[\bR_{z}(0)]_{ii}-\tilde{\rho}_i(z)|\to 0$ where
\begin{equation}\label{eq:New:Rtilde}
    \tilde{\rho}_i(z) = -R_i(z)^2\left( 1 + \gamma [\res(z) + z\res'(z)] \right) = - \frac{1 + \gamma(\res(z) + z\res'(z))}{z^2} \cdot \frac{1}{(\lambda_i+\sbar(z))^2} \,.
\end{equation}

Equipped with Eqs. (\ref{eq:New:R}) and (\ref{eq:New:Rtilde}), we now conclude the calculation. Starting with $\UpsOne$, 
\begin{align*}
    \UpsOne(z) \asympeq \frac1p \sum_{i=1}^p\rho_i(z)\lambda_i^2 = \frac{1}{z}\cdot \frac1p \sum_{i=1}^p \frac{\lambda_i^2}{\lambda_i+\sbar(z)} 
    = 
    \frac1z \cdot \frac1p \sum_{i=1}^p\left( \lambda_i - \sbar(z) + \frac{(\sbar(z))^2}{\lambda_i+\sbar(z)} \right) \,.
\end{align*}
Note that $p^{-1}\sum_{i=1}^p\lambda_i = p^{-1}\tr(\bS)\asympeq 1$, and $p^{-1}\sum_{i=1}^p \frac{1}{\lambda_i+\sbar(z)} \asympeq \MPStiel_\beta(-\sbar(z))$, where $\MPStiel_\beta(\cdot)$ is the Stieltjes transform of a Marchenko-Pastur law with shape $\beta$. Thus, formula (\ref{eq:New:UpsOneFormula}) is obtained.

Next, 
\begin{align*}
    \UpsTwo(z) \asympeq -\frac1p \sum_{i=1}^p \tilde{\rho}_i(z)\lambda_i^2 
    &= 
    \frac{1 + \gamma(\res(z) + z\res'(z))}{z^2} \cdot  \frac1p \sum_{i=1}^p\frac{\lambda_i^2}{(\lambda_i+\sbar(z))^2} \\
    &= \frac{1 + \gamma(\res(z) + z\res'(z))}{z^2} \cdot  \frac1p \sum_{i=1}^p \left( 1 - \frac{2\sbar(z)}{\lambda_i+\sbar(z)} + (\sbar(z))^2\frac{1}{(\lambda_i + \sbar(z))^2}  \right)\,.
\end{align*}
Observing that ${p^{-1}\sum_{i=1}^p \frac{1}{(\lambda_i + \sbar(z))^2}} \asympeq \MPStiel_\beta'(-\sbar(z))$, we deduce (\ref{eq:New:UpsTwoFormula}).
Thus the computation is concluded.

\end{document}